\DeclareFontFamily{U}{mathb}{\hyphenchar\font45}
\DeclareFontShape{U}{mathb}{m}{n}{
<-6> mathb5 <6-7> mathb6 <7-8> mathb7
<8-9> mathb8 <9-10> mathb9
<10-12> mathb10 <12-> mathb12
}{}
\DeclareSymbolFont{mathb}{U}{mathb}{m}{n}
\DeclareMathSymbol{\llcurly}{\mathrel}{mathb}{"CE}
\DeclareMathSymbol{\ggcurly}{\mathrel}{mathb}{"CF}
\numberwithin{equation}{section}
\newtheorem{theorem}{Theorem}[section]
\newtheorem{corollary}[theorem]{Corollary}
\newtheorem{example}{Example}[section]
\newtheorem{lemma}[theorem]{Lemma}
\newtheorem{proposition}[theorem]{Proposition}
\theoremstyle{definition}
\newtheorem{definition}[theorem]{Definition}
\newtheorem{remark}[theorem]{Remark}
\newtheorem{algorithm}[theorem]{Algorithm}
\newcommand{\precc}[1]{\prec \hspace{-1mm}\prec #1}
\newcommand{\comments}[1]{}
\title{Map-compatible Decomposition of transport paths.}
\author{Qinglan Xia, Haotian Sun}
\address{
Department of Mathematics\\
University of California
at Davis\\
Davis, CA, 95616, USA }
\email{qlxia@math.ucdavis.edu, \ hatsun@ucdavis.edu}
\date{}
\subjclass[2020]{49Q22}
\keywords{branch transportation, good decomposition, 
transport path, branching structure, rectifiable current, transport map, 
stair-shaped matrix, compatible.}
\begin{document}
\maketitle


\begin{abstract}
In the Monge-Kantorovich transport problem, the transport cost is expressed in terms of transport maps or transport plans, which play crucial roles there. 
A variant of the Monge-Kantorovich problem is the ramified (branching) transport problem that models branching transport systems via transport paths.
In this article, we showed that any cycle-free transport path between two atomic measures can be decomposed  into the sum of a 
map-compatible path and a 
plan-compatible path.
Moreover, we showed that each stair-shaped transport path can be decomposed into the difference of two map-compatible transport paths.
\end{abstract}

\section{Introduction}

The aim of this article is to provide a map-compatible decomposition of transport paths. 
In the well-known Monge-Kantorovich transport problem (see \cite{villani, Luigi, santambrogio} and references therein), the transport cost is expressed in terms of transport maps or transport plans. 
The existence of optimal transport maps, especially the Brenier map in the case of quadratic cost,
leads to numerous applications of optimal transportation theory in PDEs, Probability theory, Machine learning, etc. 
A variant of the Monge-Kantorovich transport problem is ramified (also called branched) optimal transportation (see \cite{xia1, book, xia2015motivations} and references therein). 
Through the lens of economy of scales, ramified optimal transportation 
aims at studying the branching structures that appeared in many living or non-living transport systems. 
In contrast to the classical Monge-Kantorovich transport problems, where the transport cost relies on transport maps and plans, the transport cost in the ramified transport problem is assessed across the entire branching transport system, referred to as transport paths. 

Since transport maps/plans only utilize information from the initial/target measures, knowing only transport maps/plans is insufficient for describing the transport cost that appears in ramified optimal transportation problem. 
In general, two transport paths (e.g. a ``Y-shaped" and a ``V-shaped" path) may have different transportation costs while sharing the same transport map/plan. 
Nevertheless, motivated by the significance of transport maps in the context of the Monge-Kantorovich problem, when a transport path is given, one may wonder if there exists a hidden transport map or plan that is compatible with this specific transport path.
This compatible transport map/plan tells one how the initial measure is distributed to the target measure via the given transport path.
For simplicity, this article only considers the case of atomic measures, deferring the exploration of other scenarios for future endeavors.
We want to provide a decomposition of transport paths such that each component in the decomposition is compatible with some transport map or transport plan. 

Roughly speaking, our main results are :
\begin{itemize}
\item {\bf Theorem \ref{thm: decomposition}:}
Every cycle-free
\footnote{A transport path $T$ is called cycle-free if there are no nonzero cycles on $T$. See Definition \ref{def: cycle-free current}.}
transport path $T$ can be decomposed as a sum of subcurrents $T=T_0+T_1+\cdots+T_N$ such that each $T_1,T_2,\cdots, T_N$ has a single target and $T_0$ has at most $\binom{N}{2}$ sources\footnote{Here, $N$ is the number of targets in the target measure $\mu^+$.}.

\item \textbf{Theorem \ref{thm: compatability}:} Every cycle-free transport path $T$ can be decomposed as a sum of subcurrents $T=T_\varphi+T_\pi$ such that $T_\varphi$ is compatible with some transport map $\varphi$ and $T_\pi$ is compatible with some transport plan $\pi$.

\item \textbf{Theorem \ref{thm: stair shaped induced transport maps}:} 
Every stair-shaped transport path $T$ can be decomposed as a sum of subcurrents $T=T_1+T_2$ such that both $T_1$ and $-T_2$ are compatible with some transport maps.
\end{itemize}
This article is organized as follows: We first recall in $\S 2$ some related concepts in geometric measure theory, the classical Monge-Kantorovich transport problem, and the ramified optimal transport problem. In particular, the {\it good decomposition} (i.e., Smirnov decomposition) of acyclic normal $1$-currents.

In general, the family of atoms (i.e.,
supporting curves) of a good decomposition is not necessarily linearly independent. 
This fact brings a non-unique representation of vanishing currents and causes a technical obstacle for the proof of Theorem \ref{thm: decomposition}. To overcome this, we generalize the notion of ``good decomposition" to ``better decomposition" (Definition \ref{def: better_decom}) of transport paths in $\S 3$. A better decomposition $\eta$ of a transport path $T$ prohibits combinations of any four supporting curves of $\eta$ to form a non-trivial cycle on the support of $T$.
We showed in Theorem \ref{thm: GoodS_ij} that any good decomposition of a transport path has a better decomposition that is absolutely continuous with respect to the original good decomposition.

In $\S 4$, we introduce the concept of cycle-free transport paths, which are transport paths with no non-trivial cycles on\footnote{The concept cycle-free is different to the concept ``acyclic" defined using subcurrents. As in Definition \ref{def: S_on_T}, a current $S$ is ``on" another current $T$ does not mean that $S$ is a subcurrent of $T$. When $S$ is on $T$, unlike being a subcurrent, it is possible that $S$ has a reverse orientation with $T$ on their intersections.} them.
Then, we use the ``better decomposition" achieved in Theorem \ref{thm: GoodS_ij} to give a decomposition of cycle-free transport paths, described in Theorem \ref{thm: decomposition}.

In $\S 5$, we consider the concept of ``compatibility" between transport paths and transport plans/maps.  
This concept was first introduced in \cite[Definition 7.1]{xia1} for cycle-free transport paths to describe whether a given transport plan is practically possible for transportation along the given transport path. 
We first generalize this concept, in a more general setting, to the compatibility between transport paths and transport plans/maps.
Then, using Theorem \ref{thm: decomposition}, we decompose a cycle-free transport path into the sum of a map-compatible path and a plan-compatible path, which gives Theorem \ref{thm: compatability}.

In $\S 6$, we proceed to study stair-shaped transport paths. 
We first show in Theorem \ref{thm: stairshaped-matrix} that each matrix\footnote{The size of this matrix may be countably infinite.} with non-negative entries can be transformed into a stair-shaped matrix, and in Algorithm \ref{rem: Stair shaped Algorithm}, we provide an algorithm for calculating the stair-shaped matrix. 
A transport path is called stair-shaped if it has a good decomposition that is represented by a stair-shaped matrix. 
A stair-shaped transport path is not necessarily cycle-free, but it still has a better decomposition.
Our main result for the section is Theorem \ref{thm: stair shaped induced transport maps}, which says that any stair-shaped transport path can be decomposed into the difference of two map-compatible transport paths. Note that some cycle-free transport paths are also stair-shaped. They can be decomposed not only as the sum of a map-compatible path and a plan-compatible path by Theorem \ref{thm: compatability}, but also as the sum of two map-compatible transport paths by Theorem \ref{thm: stair shaped induced transport maps}. 
We further investigate some sufficient conditions under which cycle-free transport paths are stair-shaped. An illustrating example is provided at the end.

\section{Preliminaries}
\subsection{Basic concepts in geometric measure theory} \ 

We first recall some related terminologies from geometric measure theory \cite{Simon, Lin}.
Suppose $U$ is an open set in $\mathbb{R}^{m}$ and $k \le m$, the set of all $C^\infty$ $k$-forms with compact support in $U$ is denoted by $\mathcal{D}^k(U).$ The dual space of $\mathcal{D}^k(U)$, $\mathcal{D}_k(U)$, is called the space of \textit{$k$-currents}. The mass of $T \in \mathcal{D}_k(U)$ is defined by
$$
\mathbf{M}(T)= \sup \{ T(\omega) \,:\,  \|\omega\| \le 1, \omega \in \mathcal{D}^k(U)\}.
$$
The \textit{boundary} of a current $T\in \mathcal{D}_k(U), \partial T \in \mathcal{D}_{k-1}(U)$ is defined by 
$$\partial T(\omega) = T(d\omega)\ \mathrm{for}\  \omega \in \mathcal{D}^{k-1}(U). $$

A set $M \subset \mathbb{R}^{m}$ is said to be countably \textit{$k$-rectifiable} if 
$$
M \subset M_0 \cup \left( \bigcup_{j=1}^\infty F_j(\mathbb{R}^k)\right),
$$
where $\mathcal{H}^k(M_0)=0$  under the $k$-dimensional Hausdorff measure $\mathcal{H}^k$, and $F_j: \mathbb{R}^k \to \mathbb{R}^{m}$ are Lipschitz functions for $j=1,2,\cdots.$ 
For any $T\in \mathcal{D}_k(U)$, we say that $T$ is a \textit{rectifiable $k$-current} if for each $\omega \in \mathcal{D}^k(U)$,
$$T(\omega) = \int_M \langle \omega(x) , \xi(x) \rangle \theta(x) \,d\mathcal{H}^k(x),$$
where $M$ is an $\mathcal{H}^k$-measurable countably $k$-rectifiable subset of $U$, $\theta(x)$ is a locally $\mathcal{H}^k$-integrable positive function, 
and $\xi: M \to \Lambda_k(\mathbb{R}^{m})$ is a $\mathcal{H}^k$-measurable function such that for $\mathcal{H}^k$-a.e. $x\in M$, 
$\xi(x) = \tau_1 \wedge \ldots \wedge \tau_k$, where $\tau_1 \ldots \tau_k$ is an orthonormal basis for the approximate tangent space $T_xM$. 
We will denote $T$ by $\underline{\underline{\tau}}(M,\theta,\xi)$.
When $T$ is a rectifiable $k$-current, its mass
$$
\mathbf{M}(T) = \int_M \theta(x) \,d\mathcal{H}^k(x).
$$

A current $T\in \mathcal{D}_k(U)$ is said to be \textit{normal} if 
$\mathbf{M}(T) + \mathbf{M}(\partial T) < \infty$.
In \cite{Paolini}, Paolini and Stepanov introduced the concept of subcurrents: For any $T, S\in \mathcal{D}_k(U)$, $S$ is called a \textit{subcurrent} of $T$ if
$$ \mathbf{M}(T-S) + \mathbf{M}(S) = \mathbf{M}(T).$$ A normal current $T\in \mathcal{D}_k(\mathbb{R}^m)$ is \textit{acyclic} if there is no non-trivial subcurrent $S$ of $T$ such that $\partial S =0$. 

In \cite{smirnov}, Smirnov showed that every acyclic normal $1$-current can be written as the weighted average of simple Lipschitz curves in the following sense.
Let $\Gamma$ be the space of $1$-Lipschitz curves $\gamma: [0,\infty) \to \mathbb{R}^m$, which are eventually constant. 
For $\gamma \in \Gamma$, we denote 
$$t_0(\gamma):=\sup \{t: \gamma \mbox{ is constant on } [0,t]  \}, \ 
t_\infty(\gamma):=\inf \{t: \gamma \mbox{ is constant on } [t,\infty)  \} ,$$
and $p_0(\gamma):= \gamma(0)$, $p_\infty(\gamma):= \gamma(\infty)=\lim_{t\to \infty} \gamma(t)$. A curve $\gamma \in \Gamma$ is simple if $\gamma(s)\not=\gamma(t)$ for every $t_0(\gamma) \le s < t \le t_\infty(\gamma)$.
For each simple curve $\gamma \in \Gamma$, we may associate it with the following rectifiable $1$-current, 
\begin{equation}
\label{eqn: I_gamma}
I_\gamma:= \underline{\underline{\tau}}\left( \mathrm{Im}(\gamma), \frac{\gamma'}{|\gamma'|}, 1  \right) ,
\end{equation}
where $Im(\gamma)$ denotes the image of $\gamma$ in $\mathbb{R}^m$.
\begin{definition}
\label{defn: good decomposition}
Let $T$ be a normal 1-current in $\mathbb{R}^m$ and let
$\eta$ be a finite positive measure on $\Gamma$
such that
\begin{equation}
\label{eqn: T_eta}
    T=\int_\Gamma I_\gamma \,d\eta(\gamma)
\end{equation}
in the sense that for every smooth compactly supported 1-form $\omega\in \mathcal{D}^1(\mathbb{R}^m)$, it holds that
\begin{equation}
    T(\omega)=\int_\Gamma I_\gamma(\omega) \,d\eta(\gamma).
\end{equation}
We say that $\eta$ is a \textit{good decomposition} of $T$ (see \cite{colombo}, \cite{colombo2020}, \cite{smirnov}) if $\eta$ is supported on non-constant, simple curves and
satisfies the following equalities:
\begin{itemize}
    \item[(a)] $\mathbf{M}(T)=\int_\Gamma \mathbf{M}(I_\gamma) d\eta(\gamma)=\int_\Gamma \mathcal{H}^1(Im(\gamma)) d\eta(\gamma)$;
    \item[(b)] $\mathbf{M}(\partial T)=\int_\Gamma \mathbf{M}(\partial I_\gamma) d\eta(\gamma)=2 \eta(\Gamma)$.
\end{itemize}
\end{definition}

Moreover, if $\eta$ is a good decomposition of $T$, the following statements hold \cite[Proposition 3.6]{colombo} :
\begin{itemize}
    \item 
    \begin{equation} \label{eqn: startingending measure}
    \mu^- = \int_\Gamma \delta_{\gamma (0)} \, d\eta(\gamma),\ 
    \mu^+ = \int_\Gamma \delta_{\gamma (\infty)} \, d\eta(\gamma).
    \end{equation}
    
    \item 
    If $T = \underline{\underline{\tau}}(M,\theta,\xi)$ is rectifiable, then
    \begin{equation}
    \label{eqn: theta(x)}
    \theta(x) = \eta (\{\gamma \in \Gamma : x \in \mathrm{Im}(\gamma) \} )
    \end{equation}
     for $\mathcal{H}^1$-a.e. $x \in M.$

    \item For every $\tilde{\eta} \leq  \eta$, the representation
    \[\tilde{T}=\int_\Gamma I_\gamma d\tilde{\eta}(\gamma)\]
    is a good decomposition of $\tilde{T}$. Moreover, if $T=\underline
    {\underline {\tau} }\left(M, \theta, \xi\right)$ is rectifiable, then $\tilde{T}$ can be written as $\tilde{T}=\underline
    {\underline {\tau} }(M, \tilde{\theta}, \xi )$ with
    \begin{equation}\label{eqn: tilde_theta_x}
         \tilde{\theta}(x)\le \min\{\theta(x), \tilde{\eta}(\Gamma)\} 
    \end{equation}
    for $\mathcal{H}^1$-a.e. $x\in M$.
     
\end{itemize}

In the following contexts, we adopt the notations: for any points $x,y \in \mathbb{R}^m$ and subset $A\subseteq \mathbb{R}^m$, denote
\begin{align}
    \Gamma_x & = \{\gamma \in \Gamma : x \in \mathrm{Im}(\gamma) \}, \label{gamma_{x}}\\
    \Gamma_{x,y}&=\{\gamma\in \Gamma: p_0(\gamma)=x, \ p_\infty(\gamma)=y\}, \label{gamma_{x,y}}
    \\
    \Gamma_{A,y}&=\{\gamma\in \Gamma: p_0(\gamma)\in A, \ p_\infty(\gamma)=y\}. \label{gamma_{A,y}}
\end{align}

\subsection{Basic concepts in optimal transportation theory}

\ 

We now recall some basic concepts in optimal transportation theory that are related to this article. Suppose $X$ is a convex compact subset of $\mathbb{R}^m$, the source $\mu^-$ and the target $\mu^+$ are two measures supported on $X$ of equal mass. 

\begin{itemize}
\item    
A map $\varphi: X\rightarrow X$
is called a transport map from 
$\mu^-$ to $\mu^+$ if the push-forward measure $\varphi_\# \mu^-=\mu^+$.
Let $Map(\mu^-, \mu^+)$ be the set of all transport maps from $\mu^-$ to $\mu^+$.

\item 
A Borel measure $\pi$ on $X\times X$ is called a transport plan from 
$\mu^-$ to $\mu^+$ if 
$(p_1)_\# \pi= \mu^-$ and $(p_2)_\#\pi=\mu^+$, where $p_1,p_2$ are respectively the first and the second orthogonal projection maps from $X\times X$ to $X$. Let $Plan(\mu^-, \mu^+)$ be the set of all transport plans from $\mu^-$ to $\mu^+$.

\item 
A rectifiable 1-current $T$ is called a transport path from $\mu^-$ to $\mu^+$ if its boundary $\partial T=\mu^+-\mu^-$. Let $Path(\mu^-, \mu^+)$ be the set of all transport paths from $\mu^-$ to $\mu^+$.
\end{itemize}

Let $C(x,y)$ be a non-negative Borel function, called the cost function, on $X\times X$. For any transport map $\varphi\in Map(\mu^-, \mu^+)$, the transport $C-$cost of $\varphi$ is
\[I_C(\varphi):=\int_X C(x, \varphi(x))d\mu^-(x).\]
Similarly, for any transport plan $\pi\in Plan(\mu^-, \mu^+)$,  the transport $C-$cost of $\pi$ is
\[J_C(\pi):=\int_{X\times X} C(x,y) d\pi(x,y).\]
For any transport path $T= \underline{\underline{\tau}}(M,\theta,\xi)\in Path(\mu^-, \mu^+)$, 
and any $0 \le \alpha < 1$, the transport $\mathbf{M}_\alpha$-cost of $T$ is
$$\mathbf{M}_\alpha(T) := \int_{M} \theta(x)^\alpha \,d\mathcal{H}^1.$$ 
The corresponding optimal transport problems are:
\begin{itemize}
\item {\bf Monge:} Minimize $I_C(\varphi)$ among all transport maps $\varphi\in Map(\mu^-,\mu^+)$;

\item {\bf Kantorovich:} Minimize $J_C(\pi)$ among all transport maps $\pi\in Plan(\mu^-,\mu^+)$;

\item {\bf Ramified/Branched:} Minimize $\mathbf{M}_\alpha(T)$ among all transport paths $T\in Path(\mu^-,\mu^+)$.
    
\end{itemize}
 For theoretic results such as existence/regularity and their applications, we refer to \cite{villani, Luigi, santambrogio}  for Monge-Kantorovich transport theory and \cite{xia1, book, xia2015motivations} for ramified/branched transportation.

In this article, we mainly focus on transportation between atomic measures.
Let 
\begin{equation}
\label{eqn: measures}
\mu^- = \sum_{i=1}^M m'_i \delta_{x_i} \text{ and }\mu^+ = \sum_{j=1}^N m_j \delta_{y_j} \text{ with } \sum_{i=1}^M m'_i = \sum_{j=1}^N m_j < \infty
\end{equation}
be two finite atomic measures on $X$ of equal mass with $M,N \in \mathbb{N} \cup \{\infty\}$. 
In this case, the above concepts have simplified forms:
\begin{itemize}
\item A transport map $\varphi\in Map(\mu^-,\mu^+)$ corresponds to a map $\varphi: \{1,2,\cdots, M\}\rightarrow \{1,2,\cdots, N\}$ such that for each $j=1,2,\cdots, N$,
    \[m_j=\sum_{i\in \varphi^{-1}(\{j\})}m_i'.\]
    The corresponding transport cost is
    \[I_C(\varphi)=\sum_{i=1}^M C(x_i, y_{\varphi(i)})m_i'.\]
\item A transport plan $\pi\in Map(\mu^-,\mu^+)$ corresponds to an $M\times N$ matrix $\pi=[\pi_{ij}]$ such that for each $i, j$, it holds that
    \[\sum_i \pi_{ij}=m_j \text{ and } \sum_j \pi_{ij}=m_i'.\]
    The corresponding transport cost is
    \[J_C(\pi)=\sum_{i=1}^M\sum_{j=1}^N c_{ij}\pi_{ij}\]
    where $c_{ij}=C(x_i,y_j)$.
\item A transport path $T\in Path(\mu^-,\mu^+)$ corresponds to a weighted directed graph $T$ consisting of a vertex set $V$, a directed edge set $E$ and a weight function $w: E \rightarrow (0, +\infty)$ such that $\{x_1,x_2,\ldots,x_M \}\cup\{y_1,y_2,\ldots,y_N \} \subseteq V $ and for any vertex $v\in V$, there is a balance equation: 
\[
\sum_{e\in E, e^-=v} w(e) \ = \sum_{e\in E, e^+=v} w(e) \  +\  \left\{
    \begin{array}{ll}
        \ \ m_i  &\mbox{if } v=x_i \mbox{ for some } i=1,\ldots,M  \\
        -n_j     &\mbox{if }v=y_j \mbox{ for some } j=1,\ldots,N \\
        \ \ 0    &\mbox{otherwise,}
    \end{array}
    \right.
\]
where $e^-$ and $e^+$ denote the starting and ending point of the edge $e\in E$.  
The corresponding transport $\mathbf{M}_{\alpha}$-cost of $T$ is
\[\mathbf{M}_{\alpha}(T)=\sum_{e\in E}w(e)^\alpha 
 length(e)\]
where the length $length(e)$ of the edge $e$ equals to $\mathcal{H}^1(e)$.
\end{itemize}

\section{Better decomposition of acyclic transport paths  }

Let $\mu^-$ and  $\mu^+$ be two atomic measures as given in (\ref{eqn: measures}), $T$ be an acyclic transport path from $\mu^-$ to $\mu^+$, and let $\eta$ be a good decomposition (i.e., Smirnov decomposition) of $T$. Observe that
as shown in the following example, with respect to the good decomposition $\eta$, it is possible that the family
\[\{I_\gamma:\; \eta(\{\gamma\})>0\}\]
is linearly dependent.

\begin{example}
\label{example: example 1}
Let $T$ be a transport path from $\mu^- = 4\delta_{x_1} + 2\delta_{x_2}$ to $\mu^+ = 3\delta_{y_1} + 3\delta_{y_2}$, as shown in the following figure

\begin{center}

\begin{tikzpicture} [>=latex]
\filldraw[black] (-1,1) circle (1pt) node[anchor=south]{$x_1$};
\filldraw[black] (-1,0) circle (1pt) node[anchor=north]{$x_2$};

\filldraw[black] (1,1) circle (1pt) node[anchor=south]{$y_1$};
\filldraw[black] (1,0) circle (1pt) node[anchor=north] {$y_2$};

\filldraw[black] (-0.75,0.75) circle (0pt) node[anchor=south] {$4$};

\filldraw[black] (-0.75,0.25) circle (0pt) node[anchor=north] {$2$};

\filldraw[black] (0.75,0.75) circle (0pt) node[anchor=south] {$3$};

\filldraw[black] (0.75,0.25) circle (0pt) node[anchor=north] {$3$};

\filldraw[black] (0,0.5) circle (0pt) node[anchor=north] {$6$};

\draw[->] (-1,1) -- (-0.5,0.5);
\draw[->] (-1,0) --  (-0.5,0.5);
\draw[thick,->] (-0.5,0.5) --  (0.5,0.5);
\draw[->]  (0.5,0.5) -- (1,1);
\draw[->]  (0.5,0.5) -- (1,0);    

\filldraw[black] (-2.5,0.5) circle (0pt) node[anchor=west]{$T=$};
\end{tikzpicture}.
\end{center}
 For each $(i,j)$, let 
 $\gamma_{x_i, y_j}$ be the corresponding curve from $x_i$ to $y_j$ on $T$:
\begin{center}
\begin{tikzpicture} [>=latex]
\filldraw[black] (-1,1) circle (1pt) node[anchor=south]{$x_1$};

\filldraw[black] (1,1) circle (1pt) node[anchor=south]{$y_1$};

\filldraw[black] (0,-1) circle (0pt) node[anchor=south]{$\gamma_{x_1,y_1}$};

\draw[->] (-1,1) -- (-0.5,0.5);

\draw[->] (-0.5,0.5) --  (0.5,0.5);
\draw[->]  (0.5,0.5) -- (1,1);
  
\end{tikzpicture}
\qquad 
\begin{tikzpicture} [>=latex]
\filldraw[black] (-1,1) circle (1pt) node[anchor=south]{$x_1$};

\filldraw[black] (1,0) circle (1pt) node[anchor=north] {$y_2$};

\filldraw[black] (0,-1) circle (0pt) node[anchor=south]{$\gamma_{x_1,y_2}$};

\draw[->] (-1,1) -- (-0.5,0.5);

\draw[->] (-0.5,0.5) --  (0.5,0.5);

\draw[->]  (0.5,0.5) -- (1,0);    
\end{tikzpicture}
\qquad 
\begin{tikzpicture} [>=latex]

\filldraw[black] (-1,0) circle (1pt) node[anchor=north]{$x_2$};

\filldraw[black] (1,1) circle (1pt) node[anchor=south]{$y_1$};

\filldraw[black] (0,-1) circle (0pt) node[anchor=south]{$\gamma_{x_2,y_1}$};

\draw[->] (-1,0) --  (-0.5,0.5);
\draw[->] (-0.5,0.5) --  (0.5,0.5);
\draw[->]  (0.5,0.5) -- (1,1);

\end{tikzpicture}
\qquad 
\begin{tikzpicture} [>=latex]

\filldraw[black] (-1,0) circle (1pt) node[anchor=north]{$x_2$};

\filldraw[black] (1,0) circle (1pt) node[anchor=north] {$y_2$};

\filldraw[black] (0,-1) circle (0pt) node[anchor=south]{$\gamma_{x_2,y_2}$};

\draw[->] (-1,0) --  (-0.5,0.5);
\draw[->] (-0.5,0.5) --  (0.5,0.5);

\draw[->]  (0.5,0.5) -- (1,0);    
\end{tikzpicture}

\end{center}
Then
\[
\eta=2\delta_{\gamma_{x_1,y_1}}+2\delta_{\gamma_{x_1,y_2}}+\delta_{\gamma_{x_2,y_1}}+\delta_{\gamma_{x_2,y_2}}
\]
is a good decomposition of $T$. But
$$I_{\gamma_{x_1,y_1}}-I_{\gamma_{x_1,y_2}}-I_{\gamma_{x_2,y_1}}+I_{\gamma_{x_2,y_2}}$$
is the zero 1-current.

\end{example}
The linear dependence of the family $\{I_\gamma: \eta(\{\gamma\})>0\}$ brings a non-unique representation of vanishing currents and causes an obstacle later for the proof of Theorem \ref{thm: decomposition}. To overcome this, we introduce the concept of ``better decomposition" of $T$ as follows.

For each $i=1,2,\cdots, M$, $j=1,2,\cdots, N$, as given in (\ref{gamma_{x,y}}), let $\Gamma_{x_i, y_j}$ denote all $1$-Lipschitz curves in $\Gamma$ from $x_i$ to $y_j$. 
Also, for any finite positive measure $\eta$ on $\Gamma$, denote
\begin{equation}
\label{eqn: S_ij}
S_{i,j}(\eta):=
  \begin{cases}
       \frac{1}{\eta(\Gamma_{x_i,y_j})} \int_{\Gamma_{x_i,y_j} }  I_\gamma d\eta, & \text{ if }\eta(\Gamma_{x_i,y_j})>0\\
       0, &\text{ if }\eta(\Gamma_{x_i,y_j})=0.
  \end{cases}
\end{equation}

\begin{definition} \label{def: better_decom}
Let $T$ be a transport path from $\mu^-$ to $\mu^+$ where $\mu^-$ and $\mu^+$ are given in (\ref{eqn: measures}).
Suppose $\eta$ is a good decomposition of $T$. We say that $\eta$ is a {\it better decomposition}  
of $T$ if for any pairs $1\le i_1< i_2\le M$ and $1\le j_1<j_2\leq N$, 
$$S_{i_1,j_1} (\eta) - S_{i_1,j_2}  (\eta)- S_{i_2,j_1} (\eta) + S_{i_2,j_2} (\eta)=0$$ implies that 
$$\eta(\Gamma_{ x_{i_1},y_{j_1} }) =\eta(\Gamma_{x_{i_1},y_{j_2}})=\eta(\Gamma_{x_{i_2},y_{j_1}}) = \eta(\Gamma_{x_{i_2},y_{j_2}})=0.$$
\end{definition}

\begin{example}
In Example \ref{example: example 1},
\[
\eta=2\delta_{\gamma_{x_1,y_1}}+2\delta_{\gamma_{x_1,y_2}}+\delta_{\gamma_{x_2,y_1}}+\delta_{\gamma_{x_2,y_2}}
\]
is a good but not better decomposition of $T$. 
Indeed,
$$S_{1,1} (\eta) - S_{1,2}  (\eta)- S_{2,1} (\eta) + S_{2,2} (\eta)= 
I_{\gamma_{x_1,y_1}}-I_{\gamma_{x_1,y_2}}-I_{\gamma_{x_2,y_1}}+ I_{\gamma_{x_2,y_2}} =0,$$
but $$\eta (\Gamma_{x_1,y_1}) = 2, \eta (\Gamma_{x_1,y_2}) = 2,\eta (\Gamma_{x_2,y_1}) = 1, \text{ and }\eta (\Gamma_{x_2,y_2}) = 1.$$
To realize $T$ using $\eta$, all four transportation need to be used. 

On the other hand, 
\[ \tilde{\eta}= 3\delta_{\gamma_{x_1,y_1}} +\delta_{\gamma_{x_1,y_2}}+2\delta_{\gamma_{x_2,y_2}}\]
is a better decomposition of $T$. 
In this case,
$$S_{1,1} (\tilde{\eta}) - S_{1,2}  (\tilde{\eta})- S_{2,1} (\tilde{\eta}) + S_{2,2} (\tilde{\eta}) = 
I_{\gamma_{x_1,y_1}}-I_{\gamma_{x_1,y_2}} + I_{\gamma_{x_2,y_2}} \not=0$$
despite that
$$\tilde{\eta} (\Gamma_{x_1, y_1}) = 3, 
\tilde{\eta} (\Gamma_{x_1, y_2}) = 1, 
\tilde{\eta} (\Gamma_{x_2, y_1}) = 0,
\tilde{\eta} (\Gamma_{x_2, y_2}) = 2.$$
Using this new decomposition, to realize the same $T$, one only needs to arrange three transportation. 
\end{example}

\begin{definition} 
For any two finite measures $\eta$ and $\tilde{\eta}$ on $\Gamma$, we say $\tilde{\eta} \precc \eta$ if for each pair $(i,j)$,
\begin{equation}
\label{eqn: partial_order_eta}
    \int_{\Gamma_{x_i,y_j} }  I_\gamma d\tilde{\eta} =a_{i,j} \int_{\Gamma_{x_i,y_j} }  I_\gamma d\eta
\end{equation}
for some $a_{i,j}\ge 0$.
\end{definition}
Our main result for this section is the following theorem:

\begin{theorem} \label{thm: GoodS_ij}
Let 
$T$ be a transport path from $\mu^-$ to $\mu^+$ where $\mu^-$ and $\mu^+$ are given in (\ref{eqn: measures}). For any good decomposition $\eta$ of $T$, there exists a better decomposition $\eta_\infty$ of $T$ such that $\eta_\infty \precc  \eta$.
\end{theorem}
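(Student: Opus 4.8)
The plan is to eliminate, one at a time, the linear dependences that prevent a good decomposition from being a better one, by a single elementary ``crossing swap'' of curves, and to pass to a limit when $M$ or $N$ is infinite. First I would reformulate Definition \ref{def: better_decom}. Call a quadruple $1\le i_1<i_2\le M$, $1\le j_1<j_2\le N$ \emph{bad} for a good decomposition $\eta$ if
\[S_{i_1,j_1}(\eta)+S_{i_2,j_2}(\eta)=S_{i_1,j_2}(\eta)+S_{i_2,j_1}(\eta)\]
and all four of $\eta(\Gamma_{x_{i_1},y_{j_1}}),\eta(\Gamma_{x_{i_1},y_{j_2}}),\eta(\Gamma_{x_{i_2},y_{j_1}}),\eta(\Gamma_{x_{i_2},y_{j_2}})$ are strictly positive. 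Then $\eta$ is a better decomposition precisely when it admits no bad quadruple: if the displayed identity holds but only some (not all) of the four masses are positive, then applying $\partial$ and using the convention $S_{i,j}(\eta)=0$ when $\eta(\Gamma_{x_i,y_j})=0$ produces a nontrivial vanishing signed combination of the distinct points among $x_{i_1},x_{i_2},y_{j_1},y_{j_2}$, which is impossible; hence for such a quadruple the four masses are either all positive or all zero, exactly the dichotomy of the definition. Thus it suffices to construct $\eta_\infty\precc\eta$ that is a good decomposition of $T$ and has no bad quadruple.

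The core move: given a bad quadruple for $\eta$, set $\beta_{i,j}:=\eta(\Gamma_{x_i,y_j})$ and, after possibly interchanging the two columns, assume $\beta_{i_1,j_1}\le\beta_{i_2,j_2}$. Let $\tilde\eta$ agree with $\eta$ off $\Gamma_{x_{i_1},y_{j_1}}\cup\Gamma_{x_{i_1},y_{j_2}}\cup\Gamma_{x_{i_2},y_{j_1}}\cup\Gamma_{x_{i_2},y_{j_2}}$, vanish on $\Gamma_{x_{i_1},y_{j_1}}$, and on the remaining three sets equal $\eta$ rescaled by $\tfrac{\beta_{i_2,j_2}-\beta_{i_1,j_1}}{\beta_{i_2,j_2}}$, $\tfrac{\beta_{i_1,j_2}+\beta_{i_1,j_1}}{\beta_{i_1,j_2}}$, $\tfrac{\beta_{i_2,j_1}+\beta_{i_1,j_1}}{\beta_{i_2,j_1}}$ respectively (all well defined and $\ge 0$, because a bad quadruple has all four masses positive). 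I would check four properties. (i) $\int I_\gamma\,d\tilde\eta=T$, since the net change of the current equals $-\beta_{i_1,j_1}\bigl(S_{i_1,j_1}(\eta)+S_{i_2,j_2}(\eta)-S_{i_1,j_2}(\eta)-S_{i_2,j_1}(\eta)\bigr)=0$ by the bad-quadruple identity. (ii) $\tilde\eta$ is again a good decomposition of $T$: its total mass is unchanged ($-\beta_{i_1,j_1}-\beta_{i_1,j_1}+\beta_{i_1,j_1}+\beta_{i_1,j_1}=0$), which gives property (b); for property (a) one uses that, by the statements following Definition \ref{defn: good decomposition}, each $S_{i,j}(\eta)$ is a non-negatively weighted rectifiable current carried by $T$ with the orientation of $T$, so $\mathbf{M}$ is additive on each side of the bad-quadruple identity and therefore $\mathbf{M}(S_{i_1,j_1}(\eta))+\mathbf{M}(S_{i_2,j_2}(\eta))=\mathbf{M}(S_{i_1,j_2}(\eta))+\mathbf{M}(S_{i_2,j_1}(\eta))$, an identity that makes the net change of $\int\mathcal{H}^1(\mathrm{Im}(\gamma))\,d\eta$ equal to zero, so (a) passes from $\eta$ to $\tilde\eta$. (iii) $\tilde\eta\precc\eta$, because on each $\Gamma_{x_i,y_j}$ the measure $\tilde\eta$ is a non-negative multiple of $\eta$. (iv) $\{(i,j):\tilde\eta(\Gamma_{x_i,y_j})>0\}$ is a proper subset of $\{(i,j):\eta(\Gamma_{x_i,y_j})>0\}$, since $(i_1,j_1)$ was removed and no pair created. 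Note that an entirely analogous swap acts on the other diagonal $\{(i_1,j_2),(i_2,j_1)\}$ instead; this freedom will matter below.

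Finally, iterate. If $M,N<\infty$, then $\#\{(i,j):\eta(\Gamma_{x_i,y_j})>0\}$ is a non-negative integer that strictly decreases at every crossing swap, so after finitely many swaps we reach a good decomposition of $T$ with no bad quadruple, and it is $\precc\eta$ because $\precc$ is transitive. If $M$ or $N$ is infinite, enumerate the (at most countably many) quadruples and treat them one by one, applying a crossing swap to the current measure when the quadruple in question still has all four masses positive, and doing nothing otherwise; since a swap only sends a mass to $0$ or enlarges an already positive mass, a quadruple once made non-bad stays non-bad, so the limiting measure has no bad quadruple. To extract the limit, note that every measure $\eta_k$ in the sequence restricts on each $\Gamma_{x_i,y_j}$ to a non-negative multiple $c^{(k)}_{i,j}$ of the restriction of $\eta$, with $\sum_{i,j}c^{(k)}_{i,j}\,\eta(\Gamma_{x_i,y_j})\,\mathbf{M}(S_{i,j}(\eta))=\mathbf{M}(T)$ for all $k$ by property (a); hence the $c^{(k)}_{i,j}$ are bounded, a subsequence converges coordinatewise, and the resulting measure $\eta_\infty$ satisfies $\eta_\infty\precc\eta$ and (again by ``a zero stays a zero'') has no bad quadruple. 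That $\eta_\infty$ is still a good decomposition of $T$ reduces --- property (b) being automatic for measures of this form --- to showing $\int I_\gamma\,d\eta_\infty=T$ and that property (a) survives, and both hold provided no mass escapes to infinity along the sequence. This tightness is the one genuinely delicate point, and I expect it to be the main obstacle; I would secure it by always choosing, between the two available crossing swaps at each step, the one that does not transfer mass toward large indices, and by exploiting $\sum_{i,j}\eta(\Gamma_{x_i,y_j})\,\mathbf{M}(S_{i,j}(\eta))=\mathbf{M}(T)<\infty$, which makes the tail of $\eta$ uniformly negligible. In the finite case none of this is required.
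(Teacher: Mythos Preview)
Your elementary crossing swap and the verification that it preserves the good-decomposition property are exactly what the paper does (compare Step~1 of Lemma~\ref{lem: S_ij(1,1)}), and in the finite case your cardinality argument is correct. The infinite case, however, has a genuine gap that you flag but do not close.

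Concretely: a coordinatewise subsequential limit of the scalars $c^{(k)}_{i,j}$ is not enough. To conclude that $\eta_\infty$ is a good decomposition you need $\int_\Gamma I_\gamma\,d\eta_\infty=T$, $\int_\Gamma \mathbf{M}(I_\gamma)\,d\eta_\infty=\mathbf{M}(T)$, and $\eta_\infty(\Gamma)=\tfrac12\mathbf{M}(\partial T)$. Contrary to what you write, this last equality is \emph{not} automatic; it is precisely total-mass preservation in the limit. More seriously, the first two require interchanging a limit with an infinite sum against the unbounded integrand $\mathbf{M}(I_\gamma)$, so you need tightness not just of the masses $\eta_k(\Gamma_{x_i,y_j})$ but of the weighted masses $\eta_k(\Gamma_{x_i,y_j})\,\mathbf{M}(S_{i,j}(\eta))$, and $\mathbf{M}(S_{i,j}(\eta))$ is in general unbounded over $(i,j)$. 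Your proposed fix, ``choose the swap that does not transfer mass toward large indices,'' cannot work as stated: the four positions are determined by the quadruple being addressed, and each of the two available swaps moves mass onto one corner with large row or column index, so neither choice avoids it.

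The paper's organization is designed precisely to bypass this difficulty. Rather than enumerating all quadruples, it runs through corners $(i_k,j_k)$ in the graded lexicographic order and, when clearing bad quadruples at corner $(i_k,j_k)$, modifies only the restrictions $\eta\lfloor_{\Gamma_{x_i,y_j}}$ with $i\ge i_k$ and $j\ge j_k$ (Lemma~\ref{lem: ikjk}). The payoff is equation~(\ref{eqn: eta_n_constant}): for each fixed $(i,j)$ the restriction $\eta_n\lfloor_{\Gamma_{x_i,y_j}}$ is \emph{eventually constant} in $n$. This replaces your compactness step by an explicit limit, yields total-variation convergence with a tail bounded by $\sum_{i>I}m_i'+\sum_{j>J}m_j$, and removes the need for any tightness argument against $\mathbf{M}(I_\gamma)$. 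If you want to salvage your route, a first step is to note that every swap preserves all row and column sums of the matrix $[\eta_k(\Gamma_{x_i,y_j})]$; Scheff\'e's lemma then upgrades coordinatewise convergence to total-variation convergence of the $\eta_k$, which handles $\eta_\infty(\Gamma)$, but you will still need an additional idea to pass to the limit under $\int \mathbf{M}(I_\gamma)\,d\eta_k$.

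A smaller point: the reduction ``after possibly interchanging the two columns, assume $\beta_{i_1,j_1}\le\beta_{i_2,j_2}$'' is not quite right, since interchanging columns replaces $(\beta_{i_1,j_1},\beta_{i_2,j_2})$ by $(\beta_{i_1,j_2},\beta_{i_2,j_1})$ and neither pair need have its first entry smaller. What you actually want is to kill whichever of the four entries is smallest, which is always possible by choosing the appropriate diagonal; the formulas then go through with the obvious relabeling.
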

We first give an equivalent definition of $\tilde{\eta} \precc \eta$ as follows.
\begin{lemma} \label{lem: equivalent_definition_precc}
For any two finite measures $\eta$ and $\tilde{\eta}$ on $\Gamma$,
$\tilde{\eta}\precc \eta$ if and only if they satisfy the condition
\begin{equation}
\label{eqn: equivalent_def_precc}
\text{ if } \tilde{\eta} (\Gamma_{x_i,y_j})>0 \text{ for some } (i,j), \text{ then } \eta (\Gamma_{x_i,y_j})>0  \text{ and } 
S_{i,j}(\tilde{\eta})=S_{i,j}(\eta).
\end{equation}
\end{lemma}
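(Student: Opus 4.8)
The plan is to prove the equivalence by unpacking both sides in terms of the quantities $\eta(\Gamma_{x_i,y_j})$ and $S_{i,j}(\eta)$, since by definition $\int_{\Gamma_{x_i,y_j}} I_\gamma\,d\eta = \eta(\Gamma_{x_i,y_j})\, S_{i,j}(\eta)$ (and similarly for $\tilde\eta$), with the convention that this integral is the zero current when $\eta(\Gamma_{x_i,y_j})=0$. So the defining condition $\tilde\eta \precc \eta$ reads: for each pair $(i,j)$ there is $a_{i,j}\ge 0$ with
\begin{equation*}
\tilde\eta(\Gamma_{x_i,y_j})\, S_{i,j}(\tilde\eta) = a_{i,j}\,\eta(\Gamma_{x_i,y_j})\, S_{i,j}(\eta).
\end{equation*}
The first thing to observe is the basic normalization fact: $S_{i,j}(\eta)$ is either the zero current or a rectifiable $1$-current of a definite nonzero ``size'' — more precisely, $\mathbf{M}(\partial S_{i,j}(\eta)) = 2$ whenever $\eta(\Gamma_{x_i,y_j})>0$, because each $I_\gamma$ with $\gamma\in\Gamma_{x_i,y_j}$ has $\partial I_\gamma = \delta_{y_j}-\delta_{x_i}$, so averaging gives $\partial S_{i,j}(\eta) = \delta_{y_j}-\delta_{x_i}\neq 0$. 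In particular $S_{i,j}(\eta)\neq 0$ exactly when $\eta(\Gamma_{x_i,y_j})>0$, and the same for $\tilde\eta$.

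For the forward direction, suppose $\tilde\eta\precc\eta$ and fix $(i,j)$ with $\tilde\eta(\Gamma_{x_i,y_j})>0$. Then the left side of the displayed identity is a nonzero current (its boundary is $\delta_{y_j}-\delta_{x_i}$), hence the right side is nonzero, which forces both $a_{i,j}>0$ and $\eta(\Gamma_{x_i,y_j})>0$. Taking boundaries of both sides of $\tilde\eta(\Gamma_{x_i,y_j}) S_{i,j}(\tilde\eta) = a_{i,j}\eta(\Gamma_{x_i,y_j}) S_{i,j}(\eta)$ gives $\tilde\eta(\Gamma_{x_i,y_j})(\delta_{y_j}-\delta_{x_i}) = a_{i,j}\eta(\Gamma_{x_i,y_j})(\delta_{y_j}-\delta_{x_i})$, so the scalar coefficients match: $\tilde\eta(\Gamma_{x_i,y_j}) = a_{i,j}\,\eta(\Gamma_{x_i,y_j})$. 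Plugging this back into the current identity and cancelling the nonzero scalar $\tilde\eta(\Gamma_{x_i,y_j})$ yields $S_{i,j}(\tilde\eta)=S_{i,j}(\eta)$, which is exactly (\ref{eqn: equivalent_def_precc}).

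For the converse, assume (\ref{eqn: equivalent_def_precc}). Fix $(i,j)$. If $\tilde\eta(\Gamma_{x_i,y_j})>0$, then by hypothesis $\eta(\Gamma_{x_i,y_j})>0$ and $S_{i,j}(\tilde\eta)=S_{i,j}(\eta)$, so setting $a_{i,j} := \tilde\eta(\Gamma_{x_i,y_j})/\eta(\Gamma_{x_i,y_j}) > 0$ we get $\int_{\Gamma_{x_i,y_j}} I_\gamma\,d\tilde\eta = \tilde\eta(\Gamma_{x_i,y_j})S_{i,j}(\tilde\eta) = a_{i,j}\eta(\Gamma_{x_i,y_j})S_{i,j}(\eta) = a_{i,j}\int_{\Gamma_{x_i,y_j}} I_\gamma\,d\eta$. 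If instead $\tilde\eta(\Gamma_{x_i,y_j})=0$, the left side is the zero current, so we may simply take $a_{i,j}=0$. In all cases (\ref{eqn: partial_order_eta}) holds, so $\tilde\eta\precc\eta$. I do not expect a genuine obstacle here; the one point requiring a little care is the degenerate bookkeeping when either $\eta$ or $\tilde\eta$ assigns zero mass to $\Gamma_{x_i,y_j}$ (so that $S_{i,j}$ is defined to be $0$), and the argument that a nonzero current on the left cannot equal a nonnegative multiple of the zero current — both handled cleanly by the boundary computation above.
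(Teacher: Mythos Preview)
Your proof is correct and follows essentially the same approach as the paper: both arguments apply the boundary operator to the defining identity to extract the scalar relation $\tilde{\eta}(\Gamma_{x_i,y_j}) = a_{i,j}\,\eta(\Gamma_{x_i,y_j})$, then cancel to obtain $S_{i,j}(\tilde{\eta})=S_{i,j}(\eta)$, and for the converse choose $a_{i,j}$ as the ratio of masses (or $0$ in the degenerate case). Your added remark that $\partial S_{i,j}(\eta)=\delta_{y_j}-\delta_{x_i}\neq 0$ whenever $\eta(\Gamma_{x_i,y_j})>0$ is exactly what the paper uses implicitly, so there is no substantive difference.
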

\begin{remark}
By Lemma \ref{lem: equivalent_definition_precc}, it follows that  $\tilde{\eta} (\Gamma_{x_i,y_j})=0 $ whenever $\eta (\Gamma_{x_i,y_j})=0$. We use the notation $\tilde{\eta} \precc \eta$ to mimic the absolute continuity notation $\ll$ of measures. 
\end{remark}
\begin{proof}
Suppose $\tilde{\eta} \precc \eta$. By taking the boundary operator on both sides of (\ref{eqn: partial_order_eta}), it follows that
\[
\int_{\Gamma_{x_i,y_j} } (\delta_{y_j} -\delta_{x_i})d\tilde{\eta} =a_{i,j} \int_{\Gamma_{x_i,y_j} }  (\delta_{y_j} -\delta_{x_i})  d\eta.
\]
That is, 
$$ \tilde{\eta} (\Gamma_{x_i,y_j} ) (\delta_{y_j} -\delta_{x_i}) =a_{i,j}   {\eta} (\Gamma_{x_i,y_j} ) (\delta_{y_j} -\delta_{x_i}),$$
which implies that  $ \tilde{\eta} (\Gamma_{x_i,y_j} ) 
 = a_{i,j}  {\eta} (\Gamma_{x_i,y_j} )$.
Thus, 
$\tilde{\eta} (\Gamma_{x_i,y_j} ) >0  $ implies  $a_{ij} >0$ and ${\eta} (\Gamma_{x_i,y_j} ) >0.$
Moreover, 
$$ S_{i,j}(\tilde{\eta})=  
\frac{1}{\tilde{\eta} (\Gamma_{x_i,y_j} )}\int_{\Gamma_{x_i,y_j} }  I_\gamma d\tilde{\eta} =
\frac{1}{a_{i,j}  {\eta} (\Gamma_{x_i,y_j} )} \cdot a_{i,j} \int_{\Gamma_{x_i,y_j} }  I_\gamma d\eta  =  S_{i,j}(\eta).$$

On the other hand, suppose (\ref{eqn: equivalent_def_precc}) holds. 
If $\tilde{\eta} (\Gamma_{x_i,y_j} ) =0  $, then $a_{i,j}=0$ will give (\ref{eqn: partial_order_eta}). 
If $\tilde{\eta} (\Gamma_{x_i,y_j} ) >0  $,  then (\ref{eqn: equivalent_def_precc})  implies 
$\eta (\Gamma_{x_i,y_j})>0  \text{ and } 
S_{i,j}(\tilde{\eta})=S_{i,j}(\eta).$
By setting
$$a_{i,j} = \frac{\tilde{\eta} (\Gamma_{x_i,y_j} )}{ 
 {\eta} (\Gamma_{x_i,y_j} )  },$$
 equation
(\ref{eqn: S_ij}) gives that
\[ \int_{\Gamma_{x_i,y_j} }  I_\gamma d\tilde{\eta} =\tilde{\eta} (\Gamma_{x_i,y_j} ) S_{i,j}(\tilde{\eta})
=(a_{i,j}\eta (\Gamma_{x_i,y_j} ) )S_{i,j}(\eta)
=a_{i,j} \int_{\Gamma_{x_i,y_j} }  I_\gamma d\eta .\]
\end{proof}
Note that, by using the sign function
\begin{equation}
sgn(x)=
\left\{
\begin{array}{ll}
    1, & \text{ if }x>0 \\
    0, & \text{ if }x=0\\
     -1, & \text{ if }x<0, \\
\end{array}
\right.
\end{equation} 
equation (\ref{eqn: S_ij}) gives

\begin{equation} \label{eqn: partial_S_ij}
\partial S_{i,j}(\eta) = \left\{ 
\begin{array}{ll}
    \delta_{y_j}-\delta_{x_i}, & \text{ if }\eta(\Gamma_{x_i,y_j})>0, \\
    0, & \text{ if }\eta(\Gamma_{x_i,y_j})=0
\end{array}
\right.
=sgn(\eta(\Gamma_{x_i,y_j}))(\delta_{y_j}-\delta_{x_i}).
\end{equation}

For any pairs $1\le i_1< i_2\le M$ and $1\le j_1<j_2\leq N$, define
\begin{equation}
\label{eqn: C_def}
    C[(i_1,j_1), (i_2, j_2), \eta ] : = S_{i_1,j_1} (\eta) - S_{i_1,j_2}  (\eta)- S_{i_2,j_1} (\eta) + S_{i_2,j_2} (\eta).
\end{equation}
Direct calculation gives
$$
\partial C[(i_1,j_1), (i_2, j_2), \eta ]
=
\left(
sgn(\eta(\Gamma_{x_{i_1},y_{j_2}})  - sgn(\eta(\Gamma_{x_{i_1},y_{j_1}})\right) \delta_{x_{i_1}} + 
\left(sgn(\eta(\Gamma_{x_{i_2},y_{j_1}})  - sgn(\eta(\Gamma_{x_{i_2},y_{j_2}})\right)\delta_{x_{i_2}} 
$$
$$
\hphantom{\partial C[(i_1,j_1), (i_2, j_2), \eta ]\,} +
\left(sgn(\eta(\Gamma_{x_{i_1},y_{j_1}})  - sgn(\eta(\Gamma_{x_{i_2},y_{j_1}})\right)\delta_{y_{j_1}} +
\left(sgn(\eta(\Gamma_{x_{i_2},y_{j_2}})  - sgn(\eta(\Gamma_{x_{i_1},y_{j_2}})\right)\delta_{y_{j_2}}.
$$
Hence, it follows that $\partial C[(i_1,j_1), (i_2, j_2), \eta ] =0$ if and only if
\begin{equation}
\label{eqn: equalsgn}
sgn(\eta(\Gamma_{x_{i_1},y_{j_1}}))=sgn(\eta(\Gamma_{x_{i_1},y_{j_2}}))=sgn(\eta(\Gamma_{x_{i_2},y_{j_1}}))=sgn(\eta(\Gamma_{x_{i_2},y_{j_2}}))=c,
\end{equation} 
where $c=0 $ or $1$. We denote this common value, $c$, by $s[(i_1,j_1),(i_2,j_2), \eta]$.

\begin{definition}
    For any finite positive measure $\eta$ on $\Gamma$, define
\[
\mathcal{A}_\eta(i^*,j^*)=\{(i,j): i^*< i\le M, j^*<j\leq N, \   C[(i^*,j^*), (i, j), \eta ] =0 \text{ and } s[(i^*,j^*),(i,j), \eta]=1 
 \}. \]
\end{definition}

Using this definition, saying a good decomposition $\eta$ of $T$ is a better decomposition of $T$ is equivalent to $\mathcal{A}_\eta(i,j)=\emptyset$ for all pairs $(i,j)$.

We now consider the graded lexicographical order on $\mathbb{N}^2$, namely
\[(a,b)<(c,d) \text{ if } a+b<c+d \text{ or } a=c \text{ but } b<d.\]
Under this order, $\mathbb{N}^2$ is listed in the order of
\begin{equation}
\label{eqn: ordering}
    \{(i_n,j_n)\}_{n=1}^\infty=\{(1,1), (1,2), (2,1), (1,3),\ldots, (i_n,j_n), (i_{n+1},j_{n+1}),\ldots \}.
\end{equation}

\begin{lemma} \label{lem: S_ij(1,1)}
For any good decomposition $\eta$ of $T$, there exists a good decomposition $\tilde{\eta}$ of $T$  such that $\tilde{\eta}\precc \eta$ and  $\mathcal{A}_{\tilde{\eta}}(1,1)=\emptyset$. 
\end{lemma}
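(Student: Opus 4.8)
The plan is to fix the first index pair $(1,1)$ and modify $\eta$ only on the four relevant curve-bundles so as to empty $\mathcal{A}_\eta(1,1)$, while keeping $\tilde\eta\precc\eta$. Concretely, I would run an iterative procedure indexed by the pairs $(i,j)$ with $1<i\le M$, $1<j\le N$ (traversed in the graded lexicographical order of (\ref{eqn: ordering}) restricted to such pairs): at each stage, if the current pair $(i,j)$ lies in $\mathcal{A}(1,1)$ — i.e. $C[(1,1),(i,j),\eta]=0$ and all four of $\eta(\Gamma_{x_1,y_1}),\eta(\Gamma_{x_1,y_j}),\eta(\Gamma_{x_i,y_1}),\eta(\Gamma_{x_i,y_j})$ are positive — then I cancel one of these four bundles by rerouting mass. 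The identity $C[(1,1),(i,j),\eta]=0$ says $S_{1,1}-S_{1,j}-S_{i,1}+S_{i,j}=0$, i.e. $S_{i,j}=S_{i,1}+S_{1,j}-S_{1,1}$ as currents; this is exactly the ``rectangle relation'' that lets one replace a $\Gamma_{x_i,y_j}$-curve by the signed combination of one curve through each of the other three corners. The natural move is: push all the $\eta$-mass off $\Gamma_{x_i,y_j}$, compensating by adding mass to $\Gamma_{x_i,y_1}$, $\Gamma_{x_1,y_j}$ and removing from $\Gamma_{x_1,y_1}$, keeping the boundary $\partial T$ and the current $T$ unchanged. One has to be careful about signs and about not creating negative measures, so the amount moved in one step must be bounded by $\min$ of the relevant bundle masses; but since we only need to zero out at least one of the four bundles, moving exactly $\eta(\Gamma_{x_i,y_j})$ works provided we simultaneously decrease $\eta(\Gamma_{x_1,y_1})$ by the same amount — and here the subtlety is ensuring $\eta(\Gamma_{x_1,y_1})\ge \eta(\Gamma_{x_i,y_j})$ may fail, in which case we instead zero out $\Gamma_{x_1,y_1}$ and move its entire mass, which removes $(1,1)$-type obstructions wholesale.

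The key steps in order: (1) Show that whenever $(i,j)\in\mathcal{A}_\eta(1,1)$, the current identity $S_{i,j}(\eta)=S_{i,1}(\eta)+S_{1,j}(\eta)-S_{1,1}(\eta)$ holds (this follows directly from the definition (\ref{eqn: C_def}) of $C$). (2) Define the updated measure $\eta'$ by removing mass $t$ (a parameter to be chosen) from $\Gamma_{x_i,y_j}$ and from $\Gamma_{x_1,y_1}$ — replacing the removed curves by the appropriate average curves — while adding mass $t$ to $\Gamma_{x_i,y_1}$ and to $\Gamma_{x_1,y_j}$ via their average curves $S_{i,1}(\eta)$ and $S_{1,j}(\eta)$; using step (1), check $\int I_\gamma\,d\eta'=\int I_\gamma\,d\eta = T$ so $\eta'$ is still a decomposition of $T$. (3) Verify $\eta'$ is still a \emph{good} decomposition: since all curves involved are simple and the mass/boundary additivity (a),(b) of Definition \ref{defn: good decomposition} must be rechecked, I would argue that $\eta'\precc\eta$ (the bundles $\Gamma_{x_i,y_j}$ keep their average currents $S_{i,j}$ fixed for the unaffected pairs, and for affected pairs either the mass drops to $0$ or the average current is unchanged), then invoke the fact stated after Definition \ref{defn: good decomposition} that any $\tilde\eta$ with the right structure produced from a good decomposition inherits goodness — more carefully, one checks (a) and (b) hold with equality because no cancellation in mass occurs along the support of $T$ (this is where $T$ being a transport path between atomic measures, hence essentially a finite graph, is used). (4) Choose $t=\min\{\eta(\Gamma_{x_1,y_1}),\eta(\Gamma_{x_i,y_j})\}$ at each stage so that at least one of these two bundles is emptied; iterate over all pairs $(i,j)$ and pass to the limit $\tilde\eta=\eta_\infty$. (5) Argue the limit satisfies $\mathcal{A}_{\tilde\eta}(1,1)=\emptyset$: once $\eta(\Gamma_{x_1,y_1})=0$ we are done since then $sgn(\eta(\Gamma_{x_1,y_1}))=0\ne 1$, so no pair can be in $\mathcal{A}(1,1)$; otherwise every pair $(i,j)$ that was an obstruction has had $\eta(\Gamma_{x_i,y_j})$ zeroed, and one must check the procedure does not re-create obstructions at earlier pairs — which holds because each step only \emph{decreases} $\eta(\Gamma_{x_1,y_1})$ and the bundles $\Gamma_{x_i,y_1},\Gamma_{x_1,y_j}$ that gain mass are not themselves of the form $\Gamma_{x_i,y_j}$ with $i,j>1$.

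The main obstacle I expect is twofold. First, bookkeeping the infinite iteration: if $M$ or $N$ is infinite there are infinitely many pairs, and one must show the sequence of measures $\eta^{(n)}$ converges (say in total variation on $\Gamma$, using that total mass $\eta(\Gamma)=\tfrac12\mathbf{M}(\partial T)$ is conserved and the modifications are ``monotone'' in an appropriate sense) and that the limit is still a good decomposition of the \emph{same} $T$ and still $\precc\eta$ — the relation $\precc$ needs to be shown transitive and closed under the relevant limits. Second, and more delicate, is verifying that the modified measure remains a \emph{good} decomposition rather than merely a decomposition: property (b), $\mathbf{M}(\partial T)=2\eta(\Gamma)$, forces that no two curves in the support run in opposite directions over a positive-length set, and one must confirm the rerouting (which introduces the average curves $S_{i,1}$, $S_{1,j}$) does not violate this — here acyclicity of $T$ is essential, since on a cycle-free transport path the curve from $x_i$ to $y_j$ is essentially unique up to the branching structure, so the average currents $S_{i,j}$ are honest simple-curve currents supported on $\mathrm{supp}(T)$ with consistent orientation. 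I would isolate this as the technical heart of the argument, likely citing the structure of acyclic normal $1$-currents and the consequences of good decompositions listed after Definition \ref{defn: good decomposition}, in particular (\ref{eqn: theta(x)}) and (\ref{eqn: tilde_theta_x}).
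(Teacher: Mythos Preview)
Your overall strategy is right and mirrors the paper's proof closely, with one sign reversal: where you decrease mass on $\Gamma_{x_1,y_1}$ and $\Gamma_{x_i,y_j}$ (zeroing one of them) and increase on $\Gamma_{x_1,y_j}$ and $\Gamma_{x_i,y_1}$, the paper does the opposite --- it adds $\min\{\eta_n(\Gamma_{x_1,y_{j_n}}),\eta_n(\Gamma_{x_{i_n},y_1})\}$ times the normalized rectangle measure to $\eta_n$, so that $\eta_n(\Gamma_{x_1,y_1})$ is monotone \emph{increasing} and one of $\eta_n(\Gamma_{x_1,y_{j_n}}),\eta_n(\Gamma_{x_{i_n},y_1})$ is zeroed. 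Both conventions yield the strict nesting $\mathcal{A}_{\eta_{n+1}}(1,1)\subsetneq\mathcal{A}_{\eta_n}(1,1)$ and a monotone (hence convergent) sequence $\{\eta_n\lfloor_{\Gamma_{x_i,y_j}}\}$ for each fixed $(i,j)$; the paper's choice avoids your case split on whether $\eta(\Gamma_{x_1,y_1})$ reaches zero, but your route works as well, since in your iteration a bundle mass can only increase when it is already positive (because $(i_n,j_n)\in\mathcal{A}_{\eta_n}(1,1)$ forces $s=1$), so signs never flip from $0$ to $1$ and obstructions cannot be re-created.

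The one genuine gap is in your step (3). You cannot ``invoke the fact stated after Definition~\ref{defn: good decomposition}'': that fact applies only when $\tilde\eta\le\eta$, and your $\eta'$ \emph{increases} some bundle masses. Nor does goodness here rely on $T$ being a finite graph or on acyclicity --- the lemma is stated for any $T$ admitting a good decomposition, with no cycle-freeness assumed. What actually makes condition (a) of Definition~\ref{defn: good decomposition} survive the update is the mass identity
\[
\mathbf{M}(S_{1,1}(\eta))+\mathbf{M}(S_{i,j}(\eta))=\mathbf{M}(S_{1,j}(\eta))+\mathbf{M}(S_{i,1}(\eta)),
\]
which follows from the rectangle relation $S_{1,1}+S_{i,j}=S_{1,j}+S_{i,1}$ together with the fact that $\mathbf{M}(T)=\int_\Gamma\mathbf{M}(I_\gamma)\,d\eta$ forces masses of sub-averages over disjoint pieces of $\Gamma$ to add without cancellation (this is exactly the computation the paper carries out). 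Condition (b) is immediate since each $\mathbf{M}(\partial S_{k,l})=2$. Once you replace the handwave in (3) by this explicit check, your argument goes through.
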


\begin{proof}
When $\eta(\Gamma_{x_1,y_1})=0$,  by (\ref{eqn: equalsgn}), the condition $C[(1,1), (i, j), \eta ] =0$ implies $s[(1,1),(i,j), \eta]=0$, and hence $\mathcal{A}_{\eta}(1,1)=\emptyset$.  Setting $\tilde{\eta}:= \eta$ gives us the desired results.

When $\eta(\Gamma_{x_1,y_1})\not=0$, we inductively define a sequence of good decomposition  $\{\eta_n\}$ of $T$ with $\eta_n(\Gamma_{x_1,y_1})>0 $, and whose limit is our desired measure $\tilde{\eta}$.
Set $\eta_1=\eta$.

If $\mathcal{A}_{\eta_n} (1,1)= \emptyset$ for some $n\ge 1$,  set  $\eta_m=\eta_n$ for all $m\ge n$ and set $\tilde{\eta}=\eta_n$ as well.

If $\mathcal{A}_{\eta_n}(1,1)$ is non-empty for all $n\ge 1$, we construct $\tilde{\eta}$ from $\{\eta_n\}$ via the following steps. 

{\bf Step 1: Construct a sequence of good decomposition $\{\eta_n\}$ of $T$.}

For each $n\ge 1$, assume that $\eta_n$ is a good decomposition of $T$ with $\eta_n(\Gamma_{x_1,y_1})>0 $.
Let $(i_n,j_n)$ be the minimum element in $ \mathcal{A}_{\eta_n}(1,1)$ which is a subset of $\mathbb{N}^2$ with the graded lexicographical order. Define
\[\eta_{n+1}:=\eta_{n}+ \min\{\eta_n(\Gamma_{x_1,y_{j_n}}), \eta_n(\Gamma_{x_{i_n},y_1})\}\left(
\frac{ \eta_n\lfloor_{\Gamma_{x_1,y_1}} }{\eta_n(\Gamma_{x_1,y_1})}  - 
\frac{\eta_n\lfloor_{\Gamma_{x_1,y_{j_n}}} }{\eta_n(\Gamma_{x_1,y_{j_n}})}  - 
\frac{\eta_n\lfloor_{\Gamma_{x_{i_n},y_1}} }{\eta_n(\Gamma_{x_{i_n},y_1})}  + 
\frac{\eta_n\lfloor_{\Gamma_{x_{i_n},y_{j_n}}} }{\eta_n(\Gamma_{x_{i_n},y_{j_n}})}  \right).
\]
Here, the denominators in the above equation are positive because  
$s[(1,1),(i_n,j_n), \eta_n]=1$.
Without loss of generality, we may assume that
\[0<\eta_n(\Gamma_{x_1,y_{j_n}})\leq\eta_n(\Gamma_{x_{i_n},y_1}).\]
Under this construction, we have for each $i, j$,
\begin{equation}
\label{eqn: eta_propotional}
\eta_{n+1}\lfloor_{\Gamma_{x_i,y_j}}=(1+\lambda_{n,i,j})\eta_{n}\lfloor_{\Gamma_{x_i,y_j}}
\end{equation}
for some real number $ \lambda_{n,i,j} \ge -1$.  In particular, it follows that
\begin{equation}\label{eqn: n+1 inequatliy 1}
\eta_{n+1}(\Gamma_{x_1,y_1}) >  \eta_{n}(\Gamma_{x_1,y_1})>0,  \
\eta_{n}(\Gamma_{x_1,y_{j_n} })> \eta_{n+1}(\Gamma_{x_1,y_{j_n}})=0,
\end{equation}
\begin{equation}\label{eqn: n+1 inequatliy 2}
\eta_{n} (\Gamma_{x_{i_n},y_1})> \eta_{n+1}(\Gamma_{x_{i_n},y_1})\geq 0, \
\eta_{n+1}(\Gamma_{x_{i_n},y_{j_n} }) > \eta_{n}(\Gamma_{x_{i_n},y_{j_n} })>0,
\end{equation}
and 
\begin{equation} \label{eqn: n+1 inequatliy 3}
\eta_{n+1}(\Gamma_{x_i,y_j})= \eta_{n}(\Gamma_{x_i,y_j})    
\text{ for all other } i,j.
\end{equation}

Since $\eta_n$ is a good decomposition of $T$, we have
$$T =\int_{\Gamma}I_{\gamma}d\eta_n, \; \mathbf{M}(T)=\int_\Gamma \mathbf{M}(I_\gamma) d\eta_{n}(\gamma)\; \text{ and }\mathbf{M}(\partial T)=\int_\Gamma \mathbf{M}(\partial I_\gamma) d\eta_n(\gamma). $$
In particular, $\mathbf{M}(T)=\int_\Gamma \mathbf{M}(I_\gamma) d\eta_{n}(\gamma)$ implies that
$$\mathbf{M}( S_{1,1}(\eta_{n}) + S_{i_n,j_n}(\eta_{n}) ) = \mathbf{M}( S_{1,1}(\eta_{n}) ) + \mathbf{M}( S_{i_n,j_n}(\eta_{n}) ),$$ 
and
$$\mathbf{M}( S_{1,j_n} (\eta_{n})+ S_{i_n,1}(\eta_{n}) ) = \mathbf{M}( S_{1,j_n}(\eta_{n}) ) + \mathbf{M}( S_{i_n,1}(\eta_{n}) ).$$
By assumption,
$$C[(1,1), (i_n, j_n), \eta_n] = S_{1,1}(\eta_{n})-  S_{1,j_n}(\eta_{n}) - S_{i_n,1}(\eta_{n}) + S_{i_n,j_n}(\eta_{n}) =0,$$ 
i.e.,
$S_{1,1}(\eta_{n})+S_{i_n,j_n} (\eta_{n})=S_{1,j_n}(\eta_{n}) + S_{i_n,1}(\eta_{n}).$
Thus,
\begin{eqnarray*}
&& \mathbf{M}( S_{1,1}(\eta_{n}) ) + \mathbf{M}( S_{i_n,j_n}(\eta_{n}) )
=
\mathbf{M}( S_{1,1}(\eta_{n}) + S_{i_n,j_n}(\eta_{n}) ) \\
&=& 
\mathbf{M}( S_{1,j_n}(\eta_{n}) + S_{i_n,1}(\eta_{n}) ) =
\mathbf{M}( S_{1,j_n} (\eta_{n})) + \mathbf{M}( S_{i_n,1}(\eta_{n}) ).
\end{eqnarray*}
Now, by the construction of $\eta_{n+1}$,
\[\int_{\Gamma}I_{\gamma}d \eta_{n+1}-\int_{\Gamma}I_{\gamma}d \eta_n=
\min\{\eta_n(\Gamma_{x_1,y_{j_n}}), \eta_n(\Gamma_{x_{i_n},y_1})\} \cdot 
 C[(1,1), (i_n, j_n), \eta_n]  =0 ,
\]
and
\begin{eqnarray*}
& &\int_\Gamma \mathbf{M}(I_\gamma) d\eta_{n+1}(\gamma)- \int_\Gamma \mathbf{M}(I_\gamma) d\eta_{n}(\gamma) \\
&=&\min\{\eta_n(\Gamma_{x_1,y_{j_n}}), \eta_n(\Gamma_{x_{i_n},y_1})\} \left( \mathbf{M}( S_{1,1} ) - \mathbf{M} ( S_{1,j_n} ) - \mathbf{M} ( S_{i_n,1}) + \mathbf{M}( S_{i_n,j_n} ) \right)=0.
\end{eqnarray*}
Moreover,
\begin{eqnarray*}
& &\int_\Gamma \mathbf{M}(\partial I_\gamma) d\eta_{n+1}(\gamma)-\int_\Gamma \mathbf{M}(\partial I_\gamma) d\eta_{n}(\gamma)\\
&=&
\min\{\eta_n(\Gamma_{x_1,y_{j_n}}), \eta_n(\Gamma_{x_{i_n},y_1})\} 
\left( \mathbf{M}(\partial S_{1,1} ) - \mathbf{M} ( \partial S_{1,j_n} ) - \mathbf{M} (\partial S_{i_n,1}) + \mathbf{M}( \partial S_{i_n,j_n} ) \right) \\
&=&
\min\{\eta_n(\Gamma_{x_1,y_{j_n}}), \eta_n(\Gamma_{x_{i_n},y_1})\} 
\left(2 - 2 - 2 +2 \right) =0.
\end{eqnarray*}
As a result, since $\eta_n$ is a good decomposition of $T$, $\eta_{n+1}$ is a good decomposition of $T$ as well.

{\bf Step 2: Show that the sequence $\{\eta_n\}$ converges to a good decomposition $\tilde{\eta}$ of $T$.}

Note that for each $1\le i\le M$ and $1\le j\le N$, the sequence $\{\eta_n\lfloor_{\Gamma_{x_i,y_j}}\}_{n=1}^\infty$ is a  monotonic sequence of measures with bounded mass. 
Indeed, by the construction above and by equations (\ref{eqn: n+1 inequatliy 1}), (\ref{eqn: n+1 inequatliy 2}) and (\ref{eqn: n+1 inequatliy 3}),
\begin{itemize}
\item if $i=1,j=1$, then 
$\{\eta_n\lfloor_{\Gamma_{x_i,y_j}}\}_{n=1}^\infty$ is monotone increasing;

\item if $i=1,j>1$, then 
$\{\eta_n\lfloor_{\Gamma_{x_i,y_j}}\}_{n=1}^\infty$ is monotone decreasing;

\item if $i>1,j=1$, then 
$\{\eta_n\lfloor_{\Gamma_{x_i,y_j}}\}_{n=1}^\infty$ is monotone decreasing;

\item if $i>1,j>1$, then 
$\{\eta_n\lfloor_{\Gamma_{x_i,y_j}}\}_{n=1}^\infty$ is monotone increasing, and eventually constant.

\end{itemize}
As a result, the sequence, 
$\{\eta_n\lfloor_{\Gamma_{x_i,y_j}}\}_{n=1}^\infty$, converges to some measure $\eta_{ij}$ for each $(i,j)$.
Define
\[\tilde{\eta}:=\sum_{i=1}^M\sum_{j=1}^N \eta_{ij}. \]
Hence, as $n\rightarrow \infty$,
\[
\eta_n = \sum_{i=1}^M \sum_{j=1}^N \eta_n \lfloor_{\Gamma_{x_i,y_j}} 
\longrightarrow \tilde{\eta}=
\sum_{i=1}^M\sum_{j=1}^N \eta_{ij}.
\]
Since each $\eta_n$ is a good decomposition of $T$, it follows that
\begin{eqnarray*}
&&\int_{\Gamma} I_{\gamma} d\tilde{\eta} =
\lim_{n\to \infty}\int_{\Gamma} I_{\gamma} d\eta_{n}= T,\\
&&\int_{\Gamma} \mathbf{M}(I_\gamma) d\tilde{\eta} =
\lim_{n\to \infty}\int_{\Gamma} \mathbf{M}(I_\gamma) d\eta_{n} = \mathbf{M}(T),\\
&&\int_{\Gamma} \mathbf{M}(\partial I_\gamma) d\tilde{\eta} =
\lim_{n\to \infty}\int_{\Gamma} \mathbf{M}(\partial I_\gamma) d\eta_{n} = \mathbf{M}(\partial T).
\end{eqnarray*}
As a result, $\tilde{\eta}$ is also a good decomposition of $T$.

{\bf Step 3: Show that $\tilde{\eta} \precc \eta$ .}

Suppose $\tilde{\eta}(\Gamma_{x_i,y_j}) >0$ for some pair $(i,j)$. 
Then, ${\eta}_n(\Gamma_{x_i,y_j}) >0$
when $n$ is large enough. 
By (\ref{eqn: eta_propotional}),
$${\eta}_n \lfloor_{\Gamma_{x_i,y_j}} = \prod_{k=1}^{n-1}(1+\lambda_{k,i,j}){\eta} \lfloor_{\Gamma_{x_i,y_j}}, \text{ for some } \lambda_{k,i,j} \ge -1 \text{ for each } k.$$
That is,
\[{\eta}_n=\left(\prod_{k=1}^{n-1}(1+\lambda_{k,i,j})\right){\eta} \text{ on } \Gamma_{x_i,y_j}.\]
As a result, ${\eta}_n(\Gamma_{x_i,y_j}) >0$ implies ${\eta}(\Gamma_{x_i,y_j}) >0$ and $S_{i,j}(\eta_n)=S_{i,j}(\eta)$. Since $\tilde{\eta}$ is the limit of $\eta_n$, $$S_{i,j}(\tilde{\eta})=\lim_{n\rightarrow \infty}S_{i,j}(\eta_n)=S_{i,j}(\eta).$$
This proves $\tilde{\eta}\precc \eta$.

{\bf Step 4: Show that $\mathcal{A}_{\eta_{n+1}} (1,1) \subsetneqq \mathcal{A}_{\eta_n} (1,1) $ for each $n$.}

Note that $(i_n,j_n)\in \mathcal{A}_{\eta_n} (1,1) \setminus \mathcal{A}_{\eta_{n+1}} (1,1) $. 
Indeed, if $(i_n,j_n)\in \mathcal{A}_{\eta_{n+1}} (1,1)$, then $$C[(1,1), (i_n, j_n), \eta_{n+1} ] =0 \text{ and } s[(1,1),(i_n,j_n), \eta_{n+1}]=1 .$$
This implies $sgn(\eta_{n+1}(\Gamma_{x_1,y_{j_n}}))=1$, which contradicts with $\eta_{n+1}(\Gamma_{x_1,y_{j_n}})=0$ as given in (\ref{eqn: n+1 inequatliy 1}). 

We now show that $\mathcal{A}_{\eta_{n+1}} (1,1) \subseteq  \mathcal{A}_{\eta_n} (1,1)$. 
For any $(i_0,j_0) \in \mathcal{A}_{\eta_{n+1}} (1,1)$, by definition, 
$$
C[(1,1), (i_0, j_0), \eta_{n+1} ] =0 \text{ and } 
s[(1,1), (i_0, j_0), \eta_{n+1} ] =1. 
$$ 
The condition
$s[(1,1), (i_0, j_0), \eta_{n+1} ] =1$ indicates that
$$\eta_{n+1}(\Gamma_{x_1,y_1})>0,\eta_{n+1}(\Gamma_{x_1,y_{j_0}})>0,\eta_{n+1}(\Gamma_{x_{i_0},y_1})>0,\eta_{n+1}(\Gamma_{x_{i_0},y_{j_0}}) >0.$$ 
By equations  (\ref{eqn: n+1 inequatliy 1})--(\ref{eqn: n+1 inequatliy 3}), and  $(i_0, j_0)\neq (i_n, j_n)$, 
\[
\eta_{n}(\Gamma_{x_1,y_1}) >0,  \ 
\eta_{n}(\Gamma_{x_1,y_{j_0} })\geq \eta_{n+1}(\Gamma_{x_1,y_{j_0}})>0,
\]
\[
\eta_{n} (\Gamma_{x_{i_0},y_1})\geq \eta_{n+1}(\Gamma_{x_{i_0},y_1})>0,  \ 
\eta_{n}(\Gamma_{x_{i_0},y_{j_0} })= \eta_{n+1}(\Gamma_{x_{i_0},y_{j_0} })>0.
\]
By (\ref{eqn: eta_propotional}), for each $i, j$,
when both $\eta_{n}(\Gamma_{x_i,y_j})>0$ and
$\eta_{n+1}(\Gamma_{x_i,y_j})>0$, then
\[S_{i,j}(\eta_n)=S_{i,j}(\eta_{n+1}).\]
As a result,
\[
C[(1,1), (i_0, j_0), \eta_{n} ] =
C[(1,1), (i_0, j_0), \eta_{n+1} ] = 0.
\]
Therefore, $(i_0,j_0)\in \mathcal{A}_{\eta_n}(1,1)$ and hence $\mathcal{A}_{\eta_{n+1}} (1,1) \subseteq  \mathcal{A}_{\eta_n} (1,1)$.

{\bf Step 5: Show that $\mathcal{A}_{\tilde{\eta}}(1,1)=\emptyset$.}

Assume that there exists $(i',j')\in \mathcal{A}_{\tilde{\eta}} (1,1)$, i.e. 
$C[(1,1), (i', j'), \tilde{\eta} ] =0$ and 
$s[(1,1), (i', j'), \tilde{\eta} ] =1$.
For any $(i,j) \in \{ (1,1),(1,j'),(i',1),(i',j')\}$,
since
$s[(1,1), (i', j'), \tilde{\eta} ] =1$, it follows that 
\[\lim_{n\rightarrow \infty} \eta_{n}(\Gamma_{x_{i},y_{j} }) = \tilde{\eta}(\Gamma_{x_{i},y_{j}})>0.
\]
Thus, there exists an $N_0\in \mathbb{N}$ such that $\eta_n(\Gamma_{x_i,y_j})>0$ for all $n\ge N_0$. By (\ref{eqn: eta_propotional}), this implies that the normalized current
 $S_{i,j}(\eta_n)$ is independent of $n$, and hence
$S_{i,j}(\eta_n)=S_{i,j}(\tilde{\eta})$ for all $n\ge N_0$. As a result, for each $n\ge N_0$,
\[
C[(1,1), (i', j'), \eta_{n} ]=C[(1,1), (i', j'), \tilde{\eta} ] =0  \text{ and } 
s[(1,1), (i', j'), \eta_{n} ] = s[(1,1), (i', j'), \tilde{\eta} ] =1.\]
This shows that $(i',j')\in\mathcal{A}_{\eta_n} (1,1)$. On the other hand, since $\{ \mathcal{A}_{\eta_n}  (1,1)\}$ is a sequence of nested subsets in $\mathbb{N}^2$ with $\mathcal{A}_{\eta_{n+1}} (1,1) \subsetneqq \mathcal{A}_{\eta_n} (1,1)$ for each $n$. When $n$ is larger than the order of the fixed element $(i',j')$, it is not possible for $(i',j')\in\mathcal{A}_{\eta_n} (1,1)$. 
A contradiction.

\end{proof}

We now extend Lemma \ref{lem: S_ij(1,1)} to a more general case:

\begin{lemma} \label{lem: ikjk}
For any good decomposition $\eta$ of $T$, there exists a sequence of good decomposition $\{\eta_n\}_{n=0}^\infty$ of $T$ with $\eta_0=\eta$ such that for each $n\ge 1$,
$\eta_n\precc \eta_{n-1}$   and   $\mathcal{A}_{\eta_{n}}(i_k, j_k)=\emptyset$ for all $1\leq k\leq n$, where $\{(i_k, j_k)\}$ is given in (\ref{eqn: ordering}).
\end{lemma}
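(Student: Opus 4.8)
The plan is to prove Lemma~\ref{lem: ikjk} by induction on $n$, using Lemma~\ref{lem: S_ij(1,1)} (suitably re-indexed) as the engine for the inductive step. At stage $n$ we will already have a good decomposition $\eta_{n-1}$ of $T$ with $\eta_{n-1}\precc \eta_{n-2}\precc\cdots\precc\eta_0=\eta$ and with $\mathcal{A}_{\eta_{n-1}}(i_k,j_k)=\emptyset$ for all $1\le k\le n-1$; we must produce $\eta_n\precc\eta_{n-1}$ with in addition $\mathcal{A}_{\eta_n}(i_n,j_n)=\emptyset$, while \emph{preserving} the already-established emptiness $\mathcal{A}_{\eta_n}(i_k,j_k)=\emptyset$ for $k<n$.

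First I would record that the argument of Lemma~\ref{lem: S_ij(1,1)} does not really use that the distinguished pair is $(1,1)$: given any good decomposition $\rho$ of $T$ and any fixed pair $(i^*,j^*)$, the same construction (replacing $x_1,y_1$ everywhere by $x_{i^*},y_{j^*}$, and running the graded-lexicographic induction over $\mathcal{A}_\rho(i^*,j^*)\subseteq\{(i,j): i>i^*, j>j^*\}$) yields a good decomposition $\rho'\precc\rho$ with $\mathcal{A}_{\rho'}(i^*,j^*)=\emptyset$. I would state this as the "general $(i^*,j^*)$ version" of Lemma~\ref{lem: S_ij(1,1)} — either by literally re-running the proof or by a relabeling remark — and then apply it with $\rho=\eta_{n-1}$ and $(i^*,j^*)=(i_n,j_n)$ to get a candidate $\eta_n\precc\eta_{n-1}$ with $\mathcal{A}_{\eta_n}(i_n,j_n)=\emptyset$.

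The main obstacle is the preservation of the earlier conditions: a priori, clearing $\mathcal{A}(i_n,j_n)$ could reintroduce elements into some $\mathcal{A}(i_k,j_k)$ with $k<n$. The key observation that rescues this is transitivity and monotonicity of $\precc$ together with Lemma~\ref{lem: equivalent_definition_precc}: if $\eta_n\precc\eta_{n-1}$ then for every pair $(i,j)$ with $\eta_n(\Gamma_{x_i,y_j})>0$ we have $\eta_{n-1}(\Gamma_{x_i,y_j})>0$ and $S_{i,j}(\eta_n)=S_{i,j}(\eta_{n-1})$, hence also $sgn(\eta_n(\Gamma_{x_i,y_j}))\le sgn(\eta_{n-1}(\Gamma_{x_i,y_j}))$. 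Therefore, for a fixed earlier pair $(i_k,j_k)$, membership $(i,j)\in\mathcal{A}_{\eta_n}(i_k,j_k)$ forces $s[(i_k,j_k),(i,j),\eta_n]=1$, so all four relevant $\Gamma$-classes have positive $\eta_n$-mass, hence positive $\eta_{n-1}$-mass, so $s[(i_k,j_k),(i,j),\eta_{n-1}]=1$; and on those four classes $S_{\cdot,\cdot}(\eta_n)=S_{\cdot,\cdot}(\eta_{n-1})$, so $C[(i_k,j_k),(i,j),\eta_n]=C[(i_k,j_k),(i,j),\eta_{n-1}]$. Thus $(i,j)\in\mathcal{A}_{\eta_{n-1}}(i_k,j_k)=\emptyset$, a contradiction; that is, $\mathcal{A}_{\eta_n}(i_k,j_k)\subseteq\mathcal{A}_{\eta_{n-1}}(i_k,j_k)=\emptyset$ is automatically inherited. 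In other words, $\precc$-refinement can only shrink the obstruction sets $\mathcal{A}_{\cdot}(i^*,j^*)$, never enlarge them.

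Putting these together: define $\eta_0:=\eta$; given $\eta_{n-1}$, apply the general $(i^*,j^*)$ version of Lemma~\ref{lem: S_ij(1,1)} with $(i^*,j^*)=(i_n,j_n)$ to obtain $\eta_n\precc\eta_{n-1}$ with $\mathcal{A}_{\eta_n}(i_n,j_n)=\emptyset$; by the monotonicity observation above, $\mathcal{A}_{\eta_n}(i_k,j_k)\subseteq\mathcal{A}_{\eta_{n-1}}(i_k,j_k)=\emptyset$ for all $k<n$, so all of $\mathcal{A}_{\eta_n}(i_k,j_k)=\emptyset$ for $1\le k\le n$. This closes the induction and proves the lemma. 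I expect the only slightly delicate point worth spelling out carefully is the re-indexing in the first paragraph — making sure that in the general version the graded-lexicographic induction is run over $\{(i,j): i>i^*,\ j>j^*\}$ and that the analogues of equations (\ref{eqn: n+1 inequatliy 1})--(\ref{eqn: n+1 inequatliy 3}) hold verbatim with $1$ replaced by $i^*$ and $j^*$ in the appropriate slots — after which Steps 1--5 of Lemma~\ref{lem: S_ij(1,1)} transfer with no change.
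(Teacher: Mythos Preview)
Your proposal is correct and follows essentially the same inductive strategy as the paper. The paper obtains $\eta_n$ by restricting $\eta_{n-1}$ to $\tilde{\Gamma}_n=\bigcup_{i\ge i_n,\,j\ge j_n}\Gamma_{x_i,y_j}$, applying Lemma~\ref{lem: S_ij(1,1)} to that restriction (which is a good decomposition of the sub-path $\tilde T=\int_{\tilde\Gamma_n}I_\gamma\,d\eta_{n-1}$), and patching the result back with the unchanged part $\eta_{n-1}\lfloor_{\Gamma\setminus\tilde\Gamma_n}$; this restrict-and-patch construction is exactly the ``relabeling'' realization of your ``general $(i^*,j^*)$ version'' of Lemma~\ref{lem: S_ij(1,1)}. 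Where you do streamline the argument is in the preservation step: you observe once and for all that $\eta'\precc\eta$ forces $\mathcal{A}_{\eta'}(i^*,j^*)\subseteq\mathcal{A}_\eta(i^*,j^*)$ for \emph{every} base pair $(i^*,j^*)$, directly from Lemma~\ref{lem: equivalent_definition_precc}, whereas the paper re-derives this inclusion by a two-case analysis (corner satisfying $i<i_n$ or $j<j_n$ versus corner in $\tilde\Gamma_n$). Your abstraction is cleaner and avoids tracking which $\Gamma_{x_i,y_j}$ the construction touches; the underlying logic is identical.
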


\begin{proof}
We will prove these results by induction. Lemma \ref{lem: S_ij(1,1)} provides the base case when $n=1$. For each $n\ge 2$, assume that there exists a good decomposition $\eta_{n-1}$ of $T$ such that $\eta_{n-1}\precc \eta_{n-2}$ and   $\mathcal{A}_{\eta_{n-1}}(i_k, j_k)=\emptyset$ for all $1\leq k\leq n-1$. Using $\eta_{n-1}$, we construct $\eta_n$ as follows.

Denote
\[\tilde{\Gamma}_n=\bigcup_{i_n \le i, j_n \le j} \Gamma_{x_i, y_j}.\]
Let $\tilde{\eta}_n$ be the measure $\tilde{\eta}$ achieved in Lemma \ref{lem: S_ij(1,1)} with $\eta$ being replaced by 
$\eta_{n-1}\lfloor_{\tilde{\Gamma}_n}$ and $T$ being replaced by $\tilde{T}:=\int_{\tilde{\Gamma}_n} I_\gamma d\eta_{n-1}$. Define
\[
\eta_n:=
\eta_{n-1} \lfloor_{\Gamma\setminus\tilde{\Gamma}_n} + \tilde{\eta}_n.
\]
We first claim that $\eta_n$ is a good decomposition of $T$. Indeed, since both $\tilde{\eta}_n$ and $\eta_{n-1}\lfloor_{\tilde{\Gamma}_n}$ are good decompositions of $\tilde{T}$,
\[\int_\Gamma I_\gamma d\eta_n-\int_\Gamma I_\gamma d\eta_{n-1}=\int_\Gamma I_\gamma d\tilde{\eta}_n-\int_{\tilde{\Gamma}_n} I_\gamma d\eta_{n-1}=0,\]
\begin{eqnarray*}
\int_\Gamma \mathbf{M}(I_\gamma) d\eta_{n}(\gamma)- \int_\Gamma \mathbf{M}(I_\gamma) d\eta_{n-1}(\gamma) =\int_\Gamma\mathbf{M}(I_\gamma) d\tilde{\eta}_n-\int_{\tilde{\Gamma}_n} \mathbf{M}(I_\gamma) d\eta_{n-1}=0,
\end{eqnarray*}
and
\begin{eqnarray*}
\int_\Gamma \mathbf{M}(\partial I_\gamma) d\eta_{n}(\gamma)- \int_\Gamma \mathbf{M}(\partial I_\gamma) d\eta_{n-1}(\gamma) =\int_\Gamma\mathbf{M}(\partial I_\gamma) d\tilde{\eta}_n-\int_{\tilde{\Gamma}_n} \mathbf{M}(\partial I_\gamma) d\eta_{n-1}=0.
\end{eqnarray*}
As a result, since $\eta_{n-1}$ is a good decomposition of $T$, $\eta_{n}$ is also a good decomposition of $T$.

We now show that $ \eta_n  \precc \eta_{n-1} $.
Suppose $\eta_n (\Gamma_{x_i,y_j}) >0$ for some $1\le i\le M, 1\leq j\leq N$.
\begin{itemize}

\item 
When $i<i_n$ or $j<j_n$, definition of $\eta_n$ gives $\eta_n \lfloor_{\Gamma_{x_i,y_j}}=\eta_{n-1}\lfloor_{ \Gamma_{x_i,y_j}}$. 
Therefore, 
$$\eta_{n-1} (\Gamma_{x_i,y_j})=\eta_{n} (\Gamma_{x_i,y_j})>0  \text{ and }  S_{i,j}(\eta_{n-1})=S_{i,j}(\eta_{n}).$$

\item 
When $i\geq i_n$ and $j\geq j_n$, definition of $\eta_n$ gives $\eta_n \lfloor_{\Gamma_{x_i,y_j}}=\tilde{\eta}_{n}\lfloor_{ \Gamma_{x_i,y_j}}$, so that $$\tilde{\eta}_{n} (\Gamma_{x_i,y_j})=\eta_{n} (\Gamma_{x_i,y_j})>0. $$
Since $ \tilde{\eta}_n \precc \eta_{n-1}\lfloor_{\tilde{\Gamma}_n} $ by Lemma \ref{lem: S_ij(1,1)}, it follows that
\[\eta_{n-1} (\Gamma_{x_i,y_j})>0 \text{ and } S_{i,j}(\eta_{n-1})=S_{i,j}(\tilde{\eta}_n)=S_{i,j}(\eta_{n}).\]
\end{itemize}
In both cases, $\eta_{n-1} (\Gamma_{x_i,y_j})>0$ and $S_{i,j}(\eta_{n-1})=S_{i,j}(\eta_{n})$. That is, $ \eta_{n} \precc \eta_{n-1}$.

We now show that $\mathcal{A}_{\eta_{n}}(i_k, j_k)=\emptyset$ for all $1\leq k\leq n$.
When $k=n$, $\mathcal{A}_{\eta_{n}}(i_n, j_n) =\emptyset$ by Lemma \ref{lem: S_ij(1,1)}.
Suppose $k<n$, and for contradiction, we assume $\mathcal{A}_{\eta_{n}}(i_k, j_k) \neq\emptyset$. Thus, there exists $(i^*,j^*)\in \mathcal{A}_{\eta_{n}}(i_k, j_k)$, i.e.,
$$C[(i_k,j_k), (i^*, j^*), \eta_n ] =0 \text{ and } s[(i_k,j_k),(i^*,j^*), \eta_n]=1. $$
Now, for any $(i,j)\in \{(i_k, j_k), (i_k, j^*), (i^*,j_k), (i^*,j^*)\}$, since 
$s[(i_k,j_k),(i^*,j^*), \eta_n]=1$, it follows that
$\eta_n(\Gamma_{x_{i}, y_{j}})>0$. By the definition of $\eta_n$, when $i<i_n$ or $j<j_n$,  $\eta_n=\eta_{n-1}$ on $\Gamma_{x_i, y_j}$. Thus,
\begin{equation}
\label{eqn: equal_eta_S}
\eta_{n-1}(\Gamma_{x_{i}, y_{j}})=\eta_{n}(\Gamma_{x_{i}, y_{j}})>0 \text{ and }
S_{i,j}(\eta_n)=S_{i,j}(\eta_{n-1}).
\end{equation}
When $i\ge i_n$ and $j\ge j_n$, 
$$\tilde{\eta}_{n}(\Gamma_{x_{i}, y_{j}})=\eta_{n}(\Gamma_{x_{i}, y_{j}})>0.$$
Since $ \tilde{\eta}_n \precc \eta_{n-1}\lfloor_{\tilde{\Gamma}_n} $, then equations in (\ref{eqn: equal_eta_S}) still hold. As a result, $$C[(i_k,j_k), (i^*, j^*), \eta_{n-1} ]=C[(i_k,j_k), (i^*, j^*), \eta_{n} ] =0 \text{ and } s[(i_k,j_k),(i^*,j^*), \eta_{n-1}]=1.$$ 
Therefore,
 $(i^*, j^*)\in \mathcal{A}_{\eta_{n-1}}(i_k, j_k)$, which contradicts with  $\mathcal{A}_{\eta_{n-1}}(i_k, j_k)=\emptyset$ whenever $k\le n-1$.
\end{proof}

We now give the proof of Theorem~\ref{thm: GoodS_ij} by showing that for any good decomposition $\eta$ of $T$, there exists a good decomposition $\eta_\infty$ of $T$ such that $\eta_\infty \precc  \eta$ and $\mathcal{A}_{\eta_\infty}(i, j)=\emptyset$ for all $ 1\le i\le M, 1\le j\le N$.

\begin{proof}[Proof of Theorem~\ref{thm: GoodS_ij}]
Let $\{\eta_n\}$ be the sequence of good decomposition of $T$ constructed in the proof of Lemma \ref{lem: ikjk}. Observe that by the construction of the sequence $\{\eta_n\}$, it follows that for any $k\in \mathbb{N}$, \begin{equation}
    \label{eqn: eta_n_constant}
\eta_n\lfloor_{\Gamma_{x_{i_k},y_{j_k}} } =\eta_k\lfloor_{\Gamma_{x_{i_k},y_{j_k}} }
\end{equation}
for all $n\geq k$. Define $\eta_\infty: \Gamma \rightarrow \mathbb{R}$ by setting
\begin{equation}
\label{eqn: definition_eta_infinty}
\eta_\infty:=\eta_{k} \quad \text{ on } \Gamma_{x_{i_k},y_{j_k}}, \forall k\in \mathbb{N}.
\end{equation}

We first show that $\{\eta_n\}$ converges to $\eta_\infty$ with respect to the total variation distance $\|\cdot\|$. Indeed, by (\ref{eqn: eta_n_constant}),
\begin{eqnarray*}
\|  \eta_n  -  \eta_\infty \| 
&=& 
 \|  
\sum_{k\ge 1} (\eta_n- \eta_k) \lfloor_{\Gamma_{x_{i_k}, y_{j_k}}}  \| 
=
 \|  
\sum_{k\ge n+1} (\eta_n- \eta_k) \lfloor_{\Gamma_{x_{i_k}, y_{j_k}}} \|    \\
&\le &
\sum_{k \ge n+1}\eta_n(\Gamma_{x_{i_k}, y_{j_k}}) 
+
\sum_{k \ge n+1}  \eta_k(\Gamma_{x_{i_k}, y_{j_k}})\\
&\le &
\sum_{i_k +j_k \ge i_n + j_n }\eta_n(\Gamma_{x_{i_k}, y_{j_k}}) 
+
\sum_{k \ge n+1 } \eta_k(\Gamma_{x_{i_k}, y_{j_k}}) \\
&\leq &
\sum_{i_k \ge \sqrt{i_n j_n}} \sum_{j_k =1 }^N \eta_n(\Gamma_{x_{i_k}, y_{j_k}}) 
+ 
\sum_{j_k \ge \sqrt{i_n j_n}} \sum_{i_k =1 }^M \eta_n(\Gamma_{x_{i_k}, y_{j_k}})      
+
\sum_{k \ge n+1 } \eta_k(\Gamma_{x_{i_k}, y_{j_k}}) 
\\
&=&
\sum_{i_k \ge \sqrt{i_n j_n}}   m'_{i_k}
+ 
\sum_{j_k \ge \sqrt{i_n j_n}}  m_{j_k}
+
\sum_{k \ge n+1 } \eta_k(\Gamma_{x_{i_k}, y_{j_k}}),
\end{eqnarray*}
and
$$ \eta_\infty (\Gamma)  = \sum_{k=1}^\infty \eta_{k}(\Gamma_{x_{i_k}, y_{j_k}})=\lim_{n\rightarrow \infty}\sum_{k=1}^n \eta_{k}(\Gamma_{x_{i_k}, y_{j_k}}) 
=\lim_{n\rightarrow \infty}\sum_{k=1}^n \eta_{n}(\Gamma_{x_{i_k}, y_{j_k}})\le \lim_{n\rightarrow \infty}\eta_n(\Gamma)=\eta(\Gamma)<\infty.$$
Thus, since $\lim_{n\rightarrow \infty}i_nj_n=\infty$ and $\sum_{i=1}^M m'_i = \sum_{j=1}^N m_j < \infty$, it follows that
$\lim_{n\rightarrow \infty}\|  \eta_n  -  \eta_\infty \|=0$. 
Since $\eta_n$ is a good decomposition for each $n$, it follows that its limit $\eta_\infty$ is also a good decomposition of $T$.

Moreover, if $\eta_{\infty}(\Gamma_{x_{i_k},y_{j_k}})>0$ for some $k$, then $\eta_{k}(\Gamma_{x_{i_k},y_{j_k}})>0$ by (\ref{eqn: definition_eta_infinty}). 
Thus, by Lemma  \ref{lem: ikjk} and transitivity of ``$\precc$", we have
$\eta_k \precc \eta$, which implies $$\eta (\Gamma_{x_{i_k},y_{j_k}})>0  \text{ and } 
S_{i_k,j_k}(\eta_\infty)=S_{i_k,j_k}(\eta_k)=S_{i_k,j_k}(\eta).$$
Therefore, $ \eta_\infty \precc \eta$.

We now show that
$\mathcal{A}_{\eta_\infty}(i_k, j_k)=\emptyset$ for each $k$. Assume that for some $k$, $\mathcal{A}_{\eta_\infty}(i_k, j_k)$ contains an element $(i_n,j_n)$. 
Then the definition of $\mathcal{A}_{\eta_\infty}(i_k, j_k)$ implies $n> k$ and  
\[C[(i_k,j_k), (i_n, j_n), \eta_\infty ] =0 \text{ and } s[(i_k,j_k),(i_n,j_n), \eta_\infty]=1.\]
By (\ref{eqn: eta_n_constant}) and (\ref{eqn: definition_eta_infinty}), since $(i_n, j_n)$ has the largest order among the elements 
$$\{(i_k,j_k),(i_k,j_n),(i_n,j_k),(i_n,j_n)\},$$ it follows that 
$\eta_\infty=\eta_n$ on $\Gamma_{x_i, y_j}$ for each $(i,j)$ of these four elements. Thus,
\[C[(i_k,j_k), (i_n, j_n), \eta_n ] =0 \text{ and } s[(i_k,j_k),(i_n,j_n), \eta_n]=1.\]
This shows
$(i_n,j_n) \in \mathcal{A}_{\eta_n}(i_k, j_k)$, a contradiction with $\mathcal{A}_{\eta_n}(i_k, j_k)=\emptyset$ due to Lemma \ref{lem: ikjk}.
\end{proof}

\section{Decomposition of cycle-free transport paths }

In this section, we will prove the decomposition theorem in Theorem \ref{thm: decomposition} using the better decomposition $\eta_\infty$ achieved from Theorem \ref{thm: GoodS_ij}. 

We first recall a concept that was introduced in \cite[Definition 4.6]{boundary_payoff}.
\begin{definition}
\label{def: S_on_T}
Let $T=\underline{\underline{\tau}}(M,\theta,\xi)$ and $S=\underline{\underline{\tau}}(N,\phi,\zeta)$ be two real rectifiable $k$-currents. We say $S$ is
on $T$ if $\mathcal{H}^k(N\setminus M)=0$, and $\phi(x)\le \theta(x)$ for $\mathcal{H}^k$ almost all $ x\in N$. 
\end{definition}

Note that when $S=\underline{\underline{\tau}}(N,\phi,\zeta)$ is
on $T=\underline{\underline{\tau}}(M,\theta,\xi)$, then $\xi(x)=\pm \zeta(x)$ for $\mathcal{H}^k$
almost all $ x\in N$, since two rectifiable sets have the same tangent almost everywhere on their intersection. 
Using it, we now introduce the concept of ``cycle-free" currents as follows:
\begin{definition} 
\label{def: cycle-free current}
Let $T$ and $S$ be two real rectifiable $k$-currents. $S$ is called a cycle on $T$ if $S$ is on $T$ and $\partial S=0$. Also, $T$ is called {\it cycle-free} if except for the zero current, there is no other cycle on $T$.
\end{definition}
The zero current is called the trivial cycle on $T$.

\begin{remark}
\label{rmk: cycle-free}
The concept of ``cycle-free" is different from ``acyclic".  A cycle-free current is automatically acyclic, but not vice versa. For instance, let $T$ be a transport path (which is a 1-current) from $\mu^- = \delta_{x_1} + \delta_{x_2}$ to $\mu^+ = \delta_{y_1} + \delta_{y_2}$ as shown below. 
\end{remark}

\begin{center}

\begin{tikzpicture}[>=latex]
\filldraw[black] (0,0) circle (1pt) node[anchor=north]{$x_1$};
\filldraw[black] (5,0) circle (1pt) node[anchor=north] {$y_1$};
\filldraw[black] (0,2) circle (1pt) node[anchor=south]{$x_2$};
\filldraw[black] (5,2) circle (1pt) node[anchor=south]{$y_2$};

\draw[->] (0,0)--(1.5,1.75);
\draw[->]        (1.5,1.75)--(3.5,1.75);
\draw[->]                    (3.5,1.75)--(5,0);

\draw[->] (0,2)--(1.5,0.25);
\draw[->]        (1.5,0.25)--(3.5,0.25);
\draw[->]                    (3.5,0.25)--(5,2);

\filldraw[black] (2.5,1.75) circle (0pt) node[anchor=south]{$1$};
\filldraw[black] (2.5,0.25) circle (0pt) node[anchor=south]{$1$};
   
\end{tikzpicture}    

\begin{tikzpicture}
\filldraw[black] (0,0) circle (0pt) node[anchor=south]{$T$};
\end{tikzpicture}
\end{center}
Then $T$ is acyclic but not cycle-free.

As an example, we first show that each optimal transport path is cycle-free. To do so, we start with 
an analogous result to \cite[Theorem 4.7]{boundary_payoff} as follows.

\begin{proposition}
    \label{eq:thm_perturbation}
Let $T\in Path(\mu^-, \mu^+)$ with $\mathbf{M}_{\alpha}(T)<\infty$ for some $0<\alpha<1$. 
Suppose there exists a rectifiable 1-current $S$ such that $S$ is on $T$ and $\partial S=0$, then for any $\epsilon \in [-1,1]$, $T + \epsilon S\in  Path(\mu^-, \mu^+)$ and
$$\min \left\{
\mathbf{M}_\alpha(T + S),
\mathbf{M}_\alpha(T - S) \right\}
\le \mathbf{M}_\alpha(T)$$
with the equality holds only when $S=0$. 
\end{proposition}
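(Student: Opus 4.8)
The plan is to exploit the fact that $S$ being on $T$ with $\partial S=0$ means, on the edge set of the graph underlying $T$, that $S$ assigns to each edge $e$ a signed multiple $s(e)$ of the orientation $\xi$, with $|s(e)|\le\theta(e)=w(e)$; then $T\pm\epsilon S$ is the weighted graph with edge weights $w(e)\pm\epsilon s(e)\ge 0$, and the Kirchhoff balance condition is preserved at every vertex because $\partial S=0$. Hence $T\pm\epsilon S\in Path(\mu^-,\mu^+)$ for all $\epsilon\in[-1,1]$. The cost comparison then reduces to a purely scalar, edge-by-edge estimate: writing $\mathbf{M}_\alpha(T\pm S)=\sum_e (w(e)\pm s(e))^\alpha\,\mathrm{length}(e)$, it suffices to show $\min\{(w+s)^\alpha,(w-s)^\alpha\}\le w^\alpha$ for each edge, with strict inequality whenever $s\ne 0$.

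The heart of the matter is therefore the elementary one-variable inequality: for $w>0$, $0\le\alpha<1$, and $|s|\le w$,
\[
\min\{(w+s)^\alpha,(w-s)^\alpha\}\le w^\alpha,
\]
with equality only if $s=0$. This is immediate since the smaller of $w+s$ and $w-s$ is $w-|s|\le w$, and $t\mapsto t^\alpha$ is (strictly) increasing on $[0,\infty)$; if $|s|>0$ then $w-|s|<w$ so $(w-|s|)^\alpha<w^\alpha$ (strictly, using $\alpha>0$). Summing over edges, $\min\{\mathbf{M}_\alpha(T+S),\mathbf{M}_\alpha(T-S)\}\le\sum_e\min\{(w(e)+s(e))^\alpha,(w(e)-s(e))^\alpha\}\,\mathrm{length}(e)\le\mathbf{M}_\alpha(T)$, and the inequality is strict as soon as some edge carries $s(e)\ne 0$ with $\mathrm{length}(e)>0$, i.e. as soon as $S\ne 0$ as a current (a current supported on a set of zero $\mathcal{H}^1$-measure is the zero current).

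I would present this cleanly by first invoking the rectifiable structure: write $T=\underline{\underline{\tau}}(M,\theta,\xi)$ and, since $S$ is on $T$, $S=\underline{\underline{\tau}}(N,\phi,\zeta)$ with $\mathcal{H}^1(N\setminus M)=0$, $\phi\le\theta$ on $N$, and $\zeta=\pm\xi$ $\mathcal{H}^1$-a.e. on $N$ (as noted in the paragraph after Definition \ref{def: S_on_T}). Extending $\phi$ by $0$ off $N$, one gets $T\pm S=\underline{\underline{\tau}}(M,\theta\pm\sigma\phi,\xi)$ where $\sigma=\pm1$ records the relative orientation, with density $\theta\pm\sigma\phi\ge\theta-\phi\ge 0$; the boundary computation $\partial(T\pm\epsilon S)=\partial T\pm\epsilon\partial S=\mu^+-\mu^-$ gives membership in $Path(\mu^-,\mu^+)$. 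Then $\mathbf{M}_\alpha(T\pm S)=\int_M(\theta\pm\sigma\phi)^\alpha\,d\mathcal{H}^1$ and the pointwise inequality above, integrated, finishes the proof.

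I do not anticipate a genuine obstacle here; the only mild subtlety is bookkeeping the relative orientation sign $\zeta=\pm\xi$, which can vary over different pieces of $N$, so the scalar inequality must be applied pointwise rather than globally — but since $\min\{(\theta+\sigma\phi)^\alpha,(\theta-\sigma\phi)^\alpha\}=(\theta-\phi)^\alpha$ regardless of the sign $\sigma$, this causes no real difficulty. One should also be slightly careful that the "equality holds only when $S=0$" conclusion uses $\alpha>0$ (for $\alpha=0$ the cost is just $\mathcal{H}^1(M)$ and is unchanged), which is consistent with the hypothesis $0<\alpha<1$ in the statement.
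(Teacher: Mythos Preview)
Your argument has a genuine gap at the step
\[
\min\{\mathbf{M}_\alpha(T+S),\mathbf{M}_\alpha(T-S)\}\le\int_M\min\{(\theta+\sigma\phi)^\alpha,(\theta-\sigma\phi)^\alpha\}\,d\mathcal{H}^1.
\]
This inequality goes the wrong way: for nonnegative integrands one always has $\int\min\{a,b\}\le\min\{\int a,\int b\}$, not the reverse. The point is that the relative orientation $\sigma(x)=\langle\xi(x),\zeta(x)\rangle$ may equal $+1$ on part of $M$ and $-1$ on another part, so neither global choice $T+S$ nor $T-S$ realizes the pointwise minimum $(\theta-\phi)^\alpha$ everywhere. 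Concretely, if $M$ has two pieces of unit length with $\theta\equiv 2$, $\phi\equiv 1$, and opposite signs of $\sigma$, then $\mathbf{M}_\alpha(T+S)=\mathbf{M}_\alpha(T-S)=1+3^\alpha$, while your edgewise bound gives $2$, which is strictly smaller.

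The remedy is to replace the pointwise minimum by the pointwise \emph{average} and use concavity. Since $t\mapsto t^\alpha$ is strictly concave on $[0,\infty)$ for $0<\alpha<1$, for every $x$ one has
\[
(\theta+\sigma\phi)^\alpha+(\theta-\sigma\phi)^\alpha\le 2\,\theta^\alpha,
\]
with strict inequality wherever $\phi(x)>0$. Integrating gives $\mathbf{M}_\alpha(T+S)+\mathbf{M}_\alpha(T-S)\le 2\,\mathbf{M}_\alpha(T)$, hence $\min\{\mathbf{M}_\alpha(T+S),\mathbf{M}_\alpha(T-S)\}\le\mathbf{M}_\alpha(T)$, and the inequality is strict whenever $\phi>0$ on a set of positive $\mathcal{H}^1$-measure, i.e.\ whenever $S\ne 0$. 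This is exactly the mechanism the paper uses, phrased there as strict concavity of the one-variable function $g(\epsilon)=\mathbf{M}_\alpha(T+\epsilon S)$ on $(-1,1)$. Your setup (the rectifiable representation, the extension of $\phi$ by zero, and the boundary computation showing $T+\epsilon S\in Path(\mu^-,\mu^+)$) is fine; only the final scalar inequality needs to be swapped from a min-estimate to a concavity estimate.
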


\begin{proof}
The statements clearly hold if $S=0$. Thus, in the following, we may assume that $S$ is non-zero.
Since $T\in Path(\mu^-, \mu^+)$ and $\partial S=0$, it holds that $\partial (T + \epsilon S) = \partial T + \epsilon\partial S = \partial T=\mu^+-\mu^-$. That is, $T+\epsilon S \in  Path(\mu^-,\mu^+)$.

Let $T=\underline{\underline{\tau}}(M,\theta,\xi)$ and $S=\underline{\underline{\tau}}(N,\phi,\zeta)$. Since $S$ is
on $T$, we have $\mathcal{H}^1(N\setminus M)=0$, and $\phi(x)\le \theta(x)$ for $\mathcal{H}^1$ almost all $ x\in N$. One may assume that $N=M$ by extending $\phi (x)=0$ and $\zeta (x)=\xi (x)$
for $x\in M\setminus N$. 

For $\epsilon \in [-1,1]$, we now consider the function
\[g(\epsilon)=\mathbf{M}_\alpha (T+\epsilon S) =\int_{M}\left( \theta (x)+\epsilon\phi (x)\langle \xi
(x),\zeta (x)\rangle \right) ^{\alpha }d\mathcal{H}^{1}(x).\]
Here, the value of the inner product is $\langle \xi (x),\zeta (x)\rangle =\pm 1$ for $\mathcal{H}^1-a.e.\  x\in M$.
Since $\mathbf{M}_{\alpha}(T)=\int_M \theta^\alpha d\mathcal{H}^1<\infty$ and $\phi(x)\le \theta(x)$ for $\mathcal{H}^1$ almost all $ x\in M$, we have for any $\epsilon\in (-1,1)$,
\[g'(\epsilon)=\alpha\int_{M}\left( \theta (x)+\epsilon\phi (x)\langle \xi
(x),\zeta (x)\rangle \right) ^{\alpha -1}\phi (x)\langle \xi
(x),\zeta (x)\rangle d\mathcal{H}^{1}(x)\]
and
\[g''(\epsilon)=\alpha(\alpha-1)\int_{M}\left( \theta (x)+\epsilon\phi (x)\langle \xi
(x),\zeta (x)\rangle \right) ^{\alpha -2}\phi (x)^2 d\mathcal{H}^{1}(x)<0,\]
because $0<\alpha<1$ and $S$ is non-zero. This shows that $g(\epsilon)$ is a strictly concave function on $(-1,1)$. 
By the lower semi-continuity of $\mathbf{M}_{\alpha}$, $g(\epsilon)$ is lower semi-continuous at $\epsilon=\pm 1$. 
Thus, $\min\{g(-1),g(1)\}<g(0)$. That is, $\min  \{ \mathbf{M}_\alpha({T}+{S}), \mathbf{M}_\alpha({T}-{S})  \} 
< \mathbf{M}_\alpha({T})$ whenever  $S$ is on $T$, nonzero and $\partial S=0$. 
\end{proof}

\begin{corollary}
Suppose $T$ is an $\alpha$-optimal transport path from $\mu^-$ to $\mu^+$ for $0< \alpha<1$. Then $T$ is cycle-free.
\end{corollary}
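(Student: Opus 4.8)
The plan is to derive this immediately from Proposition~\ref{eq:thm_perturbation} by a contradiction argument. Suppose $T$ is $\alpha$-optimal but not cycle-free. By Definition~\ref{def: cycle-free current}, there is then a nonzero rectifiable $1$-current $S$ which is on $T$ and satisfies $\partial S = 0$. The only preliminary point to address is that $\mathbf{M}_\alpha(T) < \infty$: since $\mu^-$ and $\mu^+$ are finite atomic measures of equal (finite) mass, one can exhibit an explicit competitor in $Path(\mu^-,\mu^+)$ with finite $\mathbf{M}_\alpha$-cost (for instance a finite graph joining the $x_i$'s to the $y_j$'s), so an $\alpha$-optimal $T$ must itself have $\mathbf{M}_\alpha(T) < \infty$. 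This puts us exactly in the hypotheses of Proposition~\ref{eq:thm_perturbation}.

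With $\mathbf{M}_\alpha(T) < \infty$ and $S$ on $T$ with $\partial S = 0$, Proposition~\ref{eq:thm_perturbation} gives that $T \pm S \in Path(\mu^-,\mu^+)$ and
$$\min\{\mathbf{M}_\alpha(T+S), \mathbf{M}_\alpha(T-S)\} \le \mathbf{M}_\alpha(T),$$
with equality only when $S = 0$. Since we assumed $S \neq 0$, the inequality is strict, so at least one of $T+S$, $T-S$ is a transport path in $Path(\mu^-,\mu^+)$ with strictly smaller $\mathbf{M}_\alpha$-cost than $T$. This contradicts the $\alpha$-optimality of $T$. Hence no nonzero cycle on $T$ can exist, i.e., $T$ is cycle-free.

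There is essentially no hard step here; the entire content has been pushed into Proposition~\ref{eq:thm_perturbation}, whose proof uses the strict concavity of $\epsilon \mapsto \mathbf{M}_\alpha(T+\epsilon S)$ together with lower semicontinuity of $\mathbf{M}_\alpha$ at the endpoints $\epsilon = \pm 1$. The only thing one must be slightly careful about is the finiteness $\mathbf{M}_\alpha(T) < \infty$, which is needed to invoke that proposition and which I would justify in one sentence as above (or by citing the existence of optimizers with finite cost from \cite{xia1}). Everything else is a direct substitution into the already-proven estimate.
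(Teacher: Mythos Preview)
Your argument is correct and is essentially identical to the paper's own proof: both assume a nonzero cycle $S$ on $T$, invoke Proposition~\ref{eq:thm_perturbation} to obtain $\min\{\mathbf{M}_\alpha(T+S),\mathbf{M}_\alpha(T-S)\} < \mathbf{M}_\alpha(T)$, and derive a contradiction to optimality. Your explicit remark on $\mathbf{M}_\alpha(T) < \infty$ is a small addition the paper leaves implicit.
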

\begin{proof}
Since $T$ is $\alpha$-optimal, it is acyclic and hence it has a good decomposition.
Suppose $S$ is on $T$ and $\partial S=0$. Assume $S$ is non-zero, then $\min  \{ \mathbf{M}_\alpha({T}+{S}), \mathbf{M}_\alpha({T}-{S})  \} 
< \mathbf{M}_\alpha({T})$, which contradicts with the $\mathbf{M}_{\alpha}$ optimality of $T$. Therefore, $S$ must be zero. Hence, $T$ is cycle-free.
\end{proof}

To characterize cycle-free transport paths, we consider their better decomposition.

\begin{proposition}
    Each cycle-free transport path $T\in Path(\mu^-, \mu^+)$ has at least a better decomposition.
\end{proposition}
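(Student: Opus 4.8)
The plan is to deduce the statement immediately from Smirnov's decomposition theorem together with Theorem \ref{thm: GoodS_ij}; the only point requiring comment is that a cycle-free transport path admits \emph{some} good decomposition to begin with, after which Theorem \ref{thm: GoodS_ij} does all the remaining work.

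First I would recall that a cycle-free transport path $T$ is acyclic. Indeed, any subcurrent $S$ of $T$ shares, up to an $\mathcal{H}^1$-null set, the underlying rectifiable set of $T$, has pointwise multiplicity no larger than that of $T$, and carries the same orientation; hence $S$ is ``on'' $T$ in the sense of Definition \ref{def: S_on_T}. If such an $S$ were nonzero with $\partial S = 0$, it would be a nonzero cycle on $T$, contradicting cycle-freeness. This is precisely the content of Remark \ref{rmk: cycle-free}. Moreover, $T \in Path(\mu^-,\mu^+)$ is a normal $1$-current: $\mathbf{M}(\partial T) = \mathbf{M}(\mu^+-\mu^-) < \infty$ because $\mu^\pm$ are finite atomic measures, and $\mathbf{M}(T) < \infty$ since $T$ is a transport path under consideration. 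Being an acyclic normal $1$-current, $T$ then admits a good decomposition $\eta$ by Smirnov's theorem \cite{smirnov} (see the discussion preceding Definition \ref{defn: good decomposition}).

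Finally, I would apply Theorem \ref{thm: GoodS_ij} to the good decomposition $\eta$ just obtained: it provides a better decomposition $\eta_\infty$ of $T$ with $\eta_\infty \precc \eta$. In particular $T$ has at least one better decomposition, which is the assertion. There is no genuine obstacle here — all of the technical effort has already been spent in Section 3 building Theorem \ref{thm: GoodS_ij}; the role of this proposition is simply to record that the hypothesis of that theorem, namely the existence of a good decomposition, is automatic for cycle-free transport paths. (The same argument in fact shows that every acyclic transport path of finite mass has a better decomposition, cycle-freeness being used only through the implication cycle-free $\Rightarrow$ acyclic.)
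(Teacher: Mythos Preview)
Your proposal is correct and follows essentially the same route as the paper's own proof, which reads in its entirety: ``By definition, each cycle-free transport path is acyclic and hence has a good decomposition. By Theorem \ref{thm: GoodS_ij}, it has a better decomposition.'' You have simply unpacked the first sentence (cycle-free $\Rightarrow$ acyclic $\Rightarrow$ Smirnov applies) in more detail than the paper does, and your parenthetical observation that only acyclicity is really used is accurate.
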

\begin{proof}
 By definition, each cycle-free transport path is acyclic and hence has a good decomposition. By Theorem \ref{thm: GoodS_ij}, it has a better decomposition.
\end{proof}

\begin{proposition}
Let $T\in Path(\mu^-, \mu^+)$ be a cycle-free transport path, and let $\eta$ be a better decomposition of $T$. For each $y_j \in \{y_1,y_2,\ldots,y_N\}$, denote
\begin{equation}
 \label{eqn: X_j}   
X_j(\eta) := \{x_i\in X : \eta(\Gamma_{x_i,y_j})>0\}.
\end{equation}
Then for each pair $1\le j_1<j_2\le N$,
\begin{equation}
\label{eqn: X_j_intersection}
    |X_{j_1}(\eta)\cap X_{j_2}(\eta)|\le 1,
\end{equation}
i.e., the intersection $X_{j_1}(\eta)\cap X_{j_2}(\eta)$ is either empty or a single point.
\end{proposition}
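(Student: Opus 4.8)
The plan is to argue by contradiction. Suppose $X_{j_1}(\eta)\cap X_{j_2}(\eta)$ contained two distinct points $x_{i_1}\neq x_{i_2}$, and relabel so that $i_1<i_2$. Then the four numbers $\mu_{ab}:=\eta(\Gamma_{x_{i_a},y_{j_b}})$, $a,b\in\{1,2\}$, are all strictly positive, so the defining property of a better decomposition (Definition \ref{def: better_decom}) forces
\[
C:=C[(i_1,j_1),(i_2,j_2),\eta]=S_{i_1,j_1}(\eta)-S_{i_1,j_2}(\eta)-S_{i_2,j_1}(\eta)+S_{i_2,j_2}(\eta)\neq 0.
\]
I will obtain a contradiction by showing that a suitable positive multiple of $C$ is a cycle on $T$, which must therefore vanish by cycle-freeness.

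First I would unpack the current $C$. Since every $\mu_{ab}>0$, (\ref{eqn: partial_S_ij}) gives $\partial S_{i_a,j_b}(\eta)=\delta_{y_{j_b}}-\delta_{x_{i_a}}$, hence $\partial C=0$. Because $\eta\lfloor_{\Gamma_{x_{i_a},y_{j_b}}}\le\eta$ and $\eta$ is a good decomposition of $T=\underline{\underline{\tau}}(M,\theta,\xi)$, the properties listed after Definition \ref{defn: good decomposition} say that $\int_{\Gamma_{x_{i_a},y_{j_b}}}I_\gamma\,d\eta$ equals $\underline{\underline{\tau}}(M,\theta_{ab},\xi)$ with the \emph{same} orientation $\xi$, and (\ref{eqn: theta(x)}) applied to this restricted current gives $\theta_{ab}(x)=\eta(\Gamma_{x_{i_a},y_{j_b}}\cap\Gamma_x)$. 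Dividing by $\mu_{ab}$, each $S_{i_a,j_b}(\eta)=\underline{\underline{\tau}}(M,\sigma_{ab},\xi)$ with $\sigma_{ab}(x)=\eta(\Gamma_{x_{i_a},y_{j_b}}\cap\Gamma_x)/\mu_{ab}$. Consequently, writing $\rho:=\sigma_{11}-\sigma_{12}-\sigma_{21}+\sigma_{22}$, one has $C=\underline{\underline{\tau}}(\{\rho\neq0\},|\rho|,\mathrm{sgn}(\rho)\,\xi)$, so the mass density of $C$ at $x$ is exactly $|\rho(x)|$.

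The key step is a pointwise density bound. Set $\mu_{\min}:=\min_{a,b}\mu_{ab}>0$. Since $i_1\neq i_2$ and $j_1\neq j_2$, the four sets $\Gamma_{x_{i_a},y_{j_b}}\cap\Gamma_x$ are pairwise disjoint (a curve has a single start point and a single end point) and all lie in $\Gamma_x$, so $\sum_{a,b}\eta(\Gamma_{x_{i_a},y_{j_b}}\cap\Gamma_x)\le\eta(\Gamma_x)=\theta(x)$ for $\mathcal{H}^1$-a.e.\ $x\in M$. Combining this with $\sigma_{ab}(x)\le\eta(\Gamma_{x_{i_a},y_{j_b}}\cap\Gamma_x)/\mu_{\min}$ gives $|\rho(x)|\le\sum_{a,b}\sigma_{ab}(x)\le\theta(x)/\mu_{\min}$. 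Hence $\mu_{\min}C$ has mass density $\le\theta$ a.e.\ on its support and tangent field $\pm\xi$, so $\mu_{\min}C$ is on $T$ in the sense of Definition \ref{def: S_on_T}; together with $\partial(\mu_{\min}C)=0$ this makes $\mu_{\min}C$ a cycle on $T$ in the sense of Definition \ref{def: cycle-free current}. By cycle-freeness $\mu_{\min}C=0$, hence $C=0$, contradicting the displayed inequality. This contradiction proves $|X_{j_1}(\eta)\cap X_{j_2}(\eta)|\le1$.

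I expect the only subtle point — and the reason the \emph{normalized} currents $S_{i,j}(\eta)$ rather than $\int_{\Gamma_{x_i,y_j}}I_\gamma\,d\eta$ are the right objects here — to be that $C$ itself need not be on $T$: along a portion of $T$ used only by the $(i_1,j_1)$- and $(i_2,j_2)$-curves the density of $C$ can reach $2$, which may exceed $\theta$. The rescaling by $\mu_{\min}$, licensed by the disjointness bound above, is exactly what repairs this. Everything else is bookkeeping: that all four orientations equal $+\xi$ (from the good-decomposition properties), that the four index families are pairwise disjoint, and the density identity $\theta_{ab}(x)=\eta(\Gamma_{x_{i_a},y_{j_b}}\cap\Gamma_x)$.
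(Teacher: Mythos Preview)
Your proof is correct and follows essentially the same route as the paper's: assume two common sources, use the better-decomposition hypothesis to force $C\neq 0$, then scale $C$ to obtain a nontrivial cycle on $T$, contradicting cycle-freeness. The only cosmetic difference is that you exploit the pairwise disjointness of the four sets $\Gamma_{x_{i_a},y_{j_b}}\cap\Gamma_x$ to get the slightly sharper scaling factor $\mu_{\min}$, whereas the paper bounds each term separately and therefore needs $\epsilon_0\le\frac{1}{4}\mu_{\min}$.
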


\begin{proof}

Assume $|X_{j_1}(\eta)\cap X_{j_2}(\eta)| > 1$. Then there exist two distinct points $x_{i_1},x_{i_2} \in X_{j_1}(\eta)\cap X_{j_2}(\eta)$ with $i_1<i_2$. Thus, 
\begin{equation}
\label{eqn: eta_positive}
    \eta (\Gamma_{x_{i_1},y_{j_1}}) >0,\   
\eta (\Gamma_{x_{i_1},y_{j_2}}) >0,\ 
\eta (\Gamma_{x_{i_2},y_{j_1}}) >0,\ 
\text{ and }\eta (\Gamma_{x_{i_2},y_{j_2}}) >0. 
\end{equation}
By (\ref{eqn: equalsgn}), this implies that $C[(i_1,j_1),(i_2,j_2), \eta]$ defined in (\ref{eqn: C_def}) is a cycle. 
Since $\eta$ is a better decomposition of $T$, by (\ref{eqn: eta_positive}), it follows that $C[(i_1,j_1),(i_2,j_2), \eta]$ is non-vanishing. Pick 
$$0 < \epsilon_0 \le \frac{1}{4} 
\min\{
\eta(\Gamma_{x_{i_1}, y_{j_1}}), \eta(\Gamma_{x_{i_1}, y_{j_2} }), \eta(\Gamma_{x_{i_2}, y_{j_1}}), \eta(\Gamma_{x_{2}, y_{j_2}}) \},$$
and observe that 
$$S = \epsilon_0 \cdot C[(i_1,j_1),(i_2,j_2), \eta]  $$
is a non-vanishing cycle on $T$. Indeed, assume $T = \underline{\underline{\tau}}(M,\theta,\xi)$ and $S=\underline{\underline{\tau}}(N,\phi,\zeta)$, then $N \subseteq M$ and for $\mathcal{H}^1$-a.e. $x$,
\begin{eqnarray*}
\phi(x) &\le &
\epsilon_0\left( 
\frac{\eta\lfloor_{\Gamma_{x_{i_1}, y_{j_1}}}}{\eta(\Gamma_{x_{i_1}, y_{j_1}})}  + 
\frac{\eta\lfloor_{\Gamma_{x_{i_1}, y_{j_2}}}  }{\eta(\Gamma_{x_{i_1}, y_{j_2}})} + 
\frac{\eta\lfloor_{\Gamma_{x_{i_2}, y_{j_1}}} }{\eta(\Gamma_{x_{i_2}, y_{j_1}})}  + 
\frac{\eta\lfloor_{\Gamma_{x_{i_2}, y_{j_2}}}  }{\eta(\Gamma_{x_{i_2}, y_{j_2}})} 
\right) \left(\{\gamma \in \Gamma : x \in \mathrm{Im}(\gamma) \}  \right) \\
& \le &
\epsilon_0 \left( 
\frac{1}{\eta(\Gamma_{x_{i_1}, y_{j_1}})}+ 
\frac{1}{\eta(\Gamma_{x_{i_1}, y_{j_2}})}+ 
\frac{1}{\eta(\Gamma_{x_{i_2}, y_{j_1}})}+ 
\frac{1}{\eta(\Gamma_{x_{i_2}, y_{j_2}})}
\right) \eta \left(\{\gamma \in \Gamma : x \in \mathrm{Im}(\gamma) \}   \right) \\
& \le &
\eta\left(\{\gamma \in \Gamma : x \in \mathrm{Im}(\gamma) \}  \right) = \theta(x),
\end{eqnarray*}
by equation (\ref{eqn: theta(x)}). 
This shows that $S$ is a non-vanishing cycle on $T$. A contradiction with $T$ is cycle-free.
\end{proof}

\begin{theorem}
\label{thm: decomposition} 
Let $T$ be a cycle-free transport path from $\mu^-$ to $\mu^+$, where $\mu^-$ and $\mu^+$ are given in (\ref{eqn: measures}). Then there exists a decomposition
\begin{equation}
\label{eqn: decomposition_T}
T=\sum_{j=0}^NT_j
\end{equation}
such that
\begin{itemize}
\item[(a)] The set $\{x_1,x_2,\cdots, x_M\}$ can be expressed as the disjoint union of its subsets 
        $\{B_j\}_{j=0}^N$ with the cardinality $|B_0|\le \binom{N}{2} $;
\item[(b)] For each $j=1,2,\cdots, N$, $T_j$ is a single-target transport path from 
        \[\mu_j^-:=\mu^-\lfloor_{B_j} \text{ to } \mu_j^+=\tilde{m}_j\delta_{y_j}\]
        for some $0\le \tilde{m}_j:=\mu^-(B_j) \le m_j$. Each $T_j$ is a subcurrent of $T$.
\item[(c)] $T_0$ is a transport path from 
         \[\mu_0^-:=\mu^-\lfloor_{B_0} \text{ to } \mu_0^+=\sum_{j=1}^N (m_j-\tilde{m}_j)\delta_{y_j}.\]
         $T_0$ is also a subcurrent of $T$.
\end{itemize}

\end{theorem}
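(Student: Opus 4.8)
The plan is to start from the better decomposition $\eta_\infty$ of $T$ provided by Theorem~\ref{thm: GoodS_ij}, and to partition the source atoms according to which targets they feed. For each $j=1,\dots,N$ let $X_j := X_j(\eta_\infty) = \{x_i : \eta_\infty(\Gamma_{x_i,y_j})>0\}$ as in \eqref{eqn: X_j}. The previous proposition tells us that $|X_{j_1}\cap X_{j_2}|\le 1$ for any $j_1\ne j_2$, so the ``overlap set'' $\bigcup_{j_1<j_2}(X_{j_1}\cap X_{j_2})$ has at most $\binom{N}{2}$ points; call it $B_0$. I would then define $B_j := X_j\setminus B_0$ for $j=1,\dots,N$ (and throw any source atom feeding no target --- which cannot happen since $\partial T = \mu^+-\mu^-$ forces every $x_i$ to lie on some curve --- into $B_0$ or an arbitrary $B_j$ if needed). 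By construction the $B_j$ for $j\ge 1$ are pairwise disjoint and disjoint from $B_0$, and every source atom that is not in $B_0$ lies in exactly one $B_j$; hence $\{B_j\}_{j=0}^N$ partitions $\{x_1,\dots,x_M\}$ with $|B_0|\le\binom{N}{2}$, giving (a).

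Next I would build the currents. For $j=1,\dots,N$ set
\[
T_j := \int_{\bigcup_{x_i\in B_j}\Gamma_{x_i,y_j}} I_\gamma\, d\eta_\infty(\gamma),
\]
and set $T_0 := T - \sum_{j=1}^N T_j = \int_{\Gamma'} I_\gamma\, d\eta_\infty$, where $\Gamma'$ is the complement in $\Gamma$ of all the curve families used above. Because $\eta_\infty$ is a good decomposition, each restriction $\eta_\infty\lfloor_{\tilde\Gamma}$ is again a good decomposition of the corresponding current, and in particular (using property (a) of Definition~\ref{defn: good decomposition} and the fact that masses add up exactly along a good decomposition) each such $\tilde T = \int_{\tilde\Gamma} I_\gamma\, d\eta_\infty$ is a subcurrent of $T$: one checks $\mathbf{M}(\tilde T)+\mathbf{M}(T-\tilde T)=\mathbf{M}(T)$ by writing both sides as integrals of $\mathbf{M}(I_\gamma)$ against $\eta_\infty$ over complementary sets and, for the pointwise density, invoking \eqref{eqn: theta(x)}. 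This yields the subcurrent claims in (b) and (c). The boundary computation is routine from \eqref{eqn: startingending measure}: $\partial T_j = \int (\delta_{\gamma(\infty)}-\delta_{\gamma(0)})\,d\eta_\infty$ over $\bigcup_{x_i\in B_j}\Gamma_{x_i,y_j}$, which equals $\tilde m_j\delta_{y_j}-\mu^-\lfloor_{B_j}$ with $\tilde m_j=\mu^-(B_j)=\sum_{x_i\in B_j}\eta_\infty(\Gamma_{x_i,y_j})$, and $\tilde m_j\le m_j$ since $m_j=\sum_i\eta_\infty(\Gamma_{x_i,y_j})$ is a sum over all sources feeding $y_j$, of which $B_j$ is a subset. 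So each $T_j$ is a single-target transport path as required, and $\partial T_0 = \partial T - \sum_j\partial T_j = (\mu^+-\mu^-) - \sum_j(\tilde m_j\delta_{y_j}-\mu^-\lfloor_{B_j}) = \sum_j(m_j-\tilde m_j)\delta_{y_j} - \mu^-\lfloor_{B_0}$, which is exactly (c).

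The main obstacle I anticipate is \textbf{verifying that the $T_j$ are genuinely subcurrents}, i.e.\ that no cancellation occurs when we restrict the good decomposition. A priori, distinct curves in the support of $\eta_\infty$ can traverse a common edge of $\mathrm{supp}(T)$ with opposite orientations, in which case the mass of a restricted current could be strictly less than the integral of $\mathbf{M}(I_\gamma)$, and $\tilde T$ might fail to be a subcurrent. Here is where cycle-freeness is essential: if two curves used in our restriction shared an edge with opposite orientation, one could splice them to produce a nonzero cycle on $T$. More carefully, I would argue that since $\eta_\infty$ is a \emph{good} decomposition, property (a) of Definition~\ref{defn: good decomposition} already guarantees $\mathbf{M}(T)=\int_\Gamma \mathcal{H}^1(\mathrm{Im}(\gamma))\,d\eta_\infty$ with no cancellation, and this property is inherited by every restriction $\eta_\infty\lfloor_{\tilde\Gamma}$ (as recorded in the third bulleted consequence of good decomposition in \S2); combined with \eqref{eqn: tilde_theta_x} and \eqref{eqn: theta(x)}, the densities of $\tilde T$ and $T-\tilde T$ add pointwise to that of $T$, which is precisely the subcurrent identity. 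The role of cycle-freeness, then, is the cleaner statement that the $B_j$ are well-defined and disjoint (via the preceding proposition on $|X_{j_1}\cap X_{j_2}|\le 1$), and I would double-check that this is the only place acyclicity of the curve family, rather than of $T$ alone, is needed --- which is exactly what the ``better decomposition'' was engineered to supply.
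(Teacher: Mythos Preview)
Your proposal is correct and follows essentially the same route as the paper's proof: both take a better decomposition $\eta$, set $B_0=\bigcup_{j_1<j_2}(X_{j_1}(\eta)\cap X_{j_2}(\eta))$ and $B_j=X_j(\eta)\setminus B_0$, define $T_j$ by restricting the good decomposition to $\bigcup_{x_i\in B_j}\Gamma_{x_i,y_j}$, and compute boundaries directly. The only cosmetic difference is that the paper defines $T_0$ explicitly as $\sum_{j}\sum_{x_i\in B_0}\int_{\Gamma_{x_i,y_j}}I_\gamma\,d\eta$ rather than as $T-\sum_{j\ge 1}T_j$, and simply asserts the subcurrent property; your more careful justification via the inherited good-decomposition property and the density identity \eqref{eqn: theta(x)} is exactly the right way to fill that in.
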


Note that, by Theorem \ref{thm: decomposition} , it follows that
\begin{equation}
    \label{eqn: decompositions}
    \mu^-=\sum_{j=0}^N\mu_{j}^- \  \text{ and }\mu^+=\sum_{j=0}^N\mu_{j}^+.
\end{equation}

\begin{proof}
Let $\eta$ be a better decomposition of $T$, and $X_j(\eta)$ be the set as defined in (\ref{eqn: X_j}).
Denote
\begin{equation}
    \label{eqn: B_0}
    B_0:=\bigcup_{1\le j_1<j_2\le N}\left(X_{j_1}(\eta)\cap X_{j_2}(\eta)\right)
\end{equation}
and for each $1\le j\le N$, denote
\[B_j:=X_j(\eta)\setminus B_0.\]
Then $\{B_j\}_{j=0}^N$ are pairwise disjoint. Moreover, by (\ref{eqn: X_j_intersection}), $|B_0|\le \binom{N}{2}$.

Define
\[T_0:=\sum_{j=1}^N  \sum_{x_i \in B_0} \int_{\Gamma_{x_i,y_j} }  I_\gamma \, d\eta, \]
and for each $1\le j\le N$, denote
\[T_j:=  \sum_{x_i \in B_j} \int_{\Gamma_{x_i,y_j} }  I_\gamma \, d\eta. \]
Then each $T_j$ is a subcurrent of $T$ for $0\le j \le N$ and
\begin{eqnarray*}
T &=& \sum_{j=1}^N \sum_{i=1}^M \int_{\Gamma_{x_i,y_j} }  I_\gamma \, d\eta
=
\sum_{j=1}^N \left(  \sum_{x_i \in B_j} \int_{\Gamma_{x_i,y_j} }  I_\gamma \, d\eta + 
\sum_{x_i \in B_0}  \int_{\Gamma_{x_i,y_j} }  I_\gamma \, d\eta \right) \\
&=&
\sum_{j=1}^N  \sum_{x_i \in B_j} \int_{\Gamma_{x_i,y_j} }  I_\gamma \, d\eta 
+
\sum_{j=1}^N  \sum_{x_i \in B_0} \int_{\Gamma_{x_i,y_j} }  I_\gamma \, d\eta \\
&=&
\sum_{j=1}^N T_j    \ + T_0=\sum_{j=0}^N T_j.
\end{eqnarray*}
For each $1\le j\le N$, $T_j$ is a single-target transport path with
\[\partial T_j=\sum_{x_i \in B_j} \int_{\Gamma_{x_i,y_j} }  (\delta_{y_j}-\delta_{x_i}) \, d\eta 
=
\left(\sum_{x_i \in B_j}\eta(\Gamma_{x_i,y_j}) \right) \delta_{y_j}- \sum_{x_i \in B_j} \eta(\Gamma_{x_i,y_j}) \delta_{x_i}.\]
Note that when $x_i \in B_j$, since $\{B_k\}$'s are pairwise disjoint, it follows that $\eta (\Gamma_{x_i,y_k})=0$ for all $k\ne j$. So,
\[\sum_{x_i \in B_j} \eta(\Gamma_{x_i,y_j}) \delta_{x_i}=\sum_{x_i \in B_j} \left(\sum_{k=1}^N\eta(\Gamma_{x_i,y_k})\right)\delta_{x_i}=\sum_{x_i \in B_j} \mu^-(\{x_i\})\delta_{x_i}=\mu^-\lfloor_{B_j}=\mu_j^-,\]
and 
\[\left(\sum_{x_i \in B_j}\eta(\Gamma_{x_i,y_j}) \right) \delta_{y_j}=\mu^-(B_j)\delta_{y_j}=\mu_j^+.\]
As a result, $\partial T_j=\mu_j^+-\mu_j^-$.

Moreover, we have the result,
\begin{eqnarray}
\label{eqn: boundary_T_0}
\partial T_0
&=& 
\sum_{j=1}^N  \sum_{x_i \in B_0} \int_{\Gamma_{x_i,y_j} }  (\delta_{y_j}-\delta_{x_i}) \, d\eta    \\
&=& \nonumber
\sum_{j=1}^N  \left(\sum_{x_i \in B_0} \eta(\Gamma_{x_i,y_j})\right) \delta_{y_j}- \sum_{x_i \in B_0}\left(\sum_{j=1}^N \eta(\Gamma_{x_i,y_j})\right) \delta_{x_i}\\
&=& \nonumber
\sum_{j=1}^N  \left(\sum_{x_i \in B_0\cap X_j(\eta)} \eta(\Gamma_{x_i,y_j})\right) \delta_{y_j}- \sum_{x_i \in B_0} \mu^-(\{x_i\})\delta_{x_i}\\
&=& \nonumber
\sum_{j=1}^N  \left(\sum_{x_i \in X_j(\eta)} \eta(\Gamma_{x_i,y_j})-\sum_{x_i \in B_j} \eta(\Gamma_{x_i,y_j})\right) \delta_{y_j}- \mu^-\lfloor_{B_0}
 \\
&=&\nonumber
\sum_{j=1}^N  \left(\sum_{i=1}^M \eta(\Gamma_{x_i,y_j})-\sum_{x_i \in B_j} \eta(\Gamma_{x_i,y_j})\right) \delta_{y_j}- \mu^-\lfloor_{B_0}
 \\
&=& \nonumber
\sum_{j=1}^N  \left(m_j- \mu^-(B_j)\right) \delta_{y_j}- \mu^-\lfloor_{B_0} \\
&=& \nonumber
\mu_0^+-\mu_0^-.
\end{eqnarray}

\end{proof}

\section{Transport Paths induced Transport Maps and Transport Plans} 
In this section, we will decompose a cycle-free transport path into the sum of two transport paths, the first one is induced by a compatible transport map, while the second one is induced by a compatible transport plan.
We first recall the concept of compatibility introduced in \cite[Definition 7.1]{xia1}, and rewrite it in terms of our current contexts.

Suppose $\mu^-$ and $\mu^+$
are two atomic measures of equal finite mass as given in (\ref{eqn: measures}). Let $Path_0(\mu^-, \mu^+)$ denote the family of all cycle-free transport paths from $\mu^-$ to $ \mu^+$.

\begin{remark}
    In \cite[Definition 7.1]{xia1}, we used $Path_0(\mu^-, \mu^+)$ to denote the family of all ``acyclic" transport paths from $\mu^-$ to $\mu^+$. In \cite{xia1}, a transport path $G$  is called ``acyclic" if it satisfies the following condition: {\it for any polyhedral 1-chain $\tilde{G}$ with the support of $\tilde{G}$ contained in the support of $G$, if $\partial \tilde{G}=0$ then $\tilde{G}=0$.} 
In the current context, $G$ is an ``acyclic" transport path simply means that it is cycle-free. 
To avoid confusion between the term ``acyclic" used in \cite{xia1} and the acyclic concept defined using subcurrents in \cite{Paolini}, we opt for the term ``cycle-free" to name the term ``acyclic" used in \cite{xia1}.
\end{remark}

Observe that for any $G\in Path_0(\mu^-, \mu^+)$ and for each $x_i$ and $y_j$, there exists at most one directed polyhedral curve $g_{ij}$  from $x_i$ to $y_j$, supported on the support of $G$.  
Thus, we associate each $G\in Path_0(\mu^-, \mu^+)$
with a $M\times N$ polyhedral 1-chain valued matrix 
$g=
\begin{bmatrix}
I_{g_{ij}}
\end{bmatrix}$, such that $I_{g_{ij}}=0$ when $g_{ij}$ does not exist.

\begin{definition} (\cite[Definition 7.1]{xia1})
\label{def: atomic_compatibility}
Let $G\in Path_0(\mu^-, \mu^+)$ and $q \in Plan(\mu^-, \mu^+)$ with associated matrices $
\begin{bmatrix}
I_{g_{ij}}
\end{bmatrix}$ and $
\begin{bmatrix}
q_{ij}
\end{bmatrix}$ respectively. The pair $(G,q)$ is called {\it compatible} if 
$q_{ij} = 0$ whenever $I_{g_{ij}} = 0$ and
\begin{equation}  
\label{eqn: comptaible_currents}
G = \sum_{i=1}^M \sum_{j=1}^N q_{ij} I_{g_{ij}} \text{ and }
q=\sum_{i=1}^M \sum_{j=1}^N q_{ij}  \delta_{(x_i, y_j)}
\end{equation}
as polyhedral 1-chains.
\end{definition}

\begin{example}

For instance, 
let $$\mu^- = \frac{1}{4} \delta_{x_1} + \frac{3}{4} \delta_{x_2},\ \mu^+ = \frac{5}{8} \delta_{y_1} + \frac{3}{8}\delta_{y_2} ,$$
and consider the following transport plan,
$$q= \frac{1}{8} \delta_{(x_1, y_1)} + \frac{1}{8} \delta_{(x_1, y_2)} +\frac{1}{2} \delta_{(x_2, y_1)} +\frac{1}{4} \delta_{(x_2, y_2)} \in Plan (\mu^-, \mu^+)  .$$
Let $G_1$ and $G_2$ be two transport paths as illustrated in the following figure.

\begin{center}

\begin{tikzpicture} [>=latex]
\filldraw[black] (-2,0) circle (1pt) node[anchor=north]{$y_1$};
\filldraw[black] (2,0) circle (1pt) node[anchor=north] {$y_2$};
\filldraw[black] (-2,2) circle (1pt) node[anchor=south]{$x_1$};
\filldraw[black] (2,2) circle (1pt) node[anchor=south]{$x_2$};

\draw[->] (-2,2) -- (0,1.5);
\draw[->] (2,2) --  (0,1.5);
\draw[thick,->] (0,1.5) --  (0,0.5);
\draw[->] (0,0.5) -- (-2,0);
\draw[->] (0,0.5) --  (2,0);

\filldraw[black] (3,0) circle (1pt) node[anchor=north]{$y_1$};
\filldraw[black] (7,0) circle (1pt) node[anchor=north] {$y_2$};
\filldraw[black] (3,2) circle (1pt) node[anchor=south]{$x_1$};
\filldraw[black] (7,2) circle (1pt) node[anchor=south]{$x_2$};

\draw[->] (3,2) -- (4,1);
\draw[thick,->] (7,2) --  (6.3,1.5);
\draw[->] (6.3,1.5) --  (4,1);
\draw[thick,->] (4,1) -- (3,0);
\draw[->] (6.3,1.5) --  (7,0);
\end{tikzpicture}

\begin{tikzpicture}
\filldraw[black] (0,0) circle (0pt) node[anchor=south]{$G_1$};
\filldraw[black] (5,0) circle (0pt) node[anchor=south]{$G_2$};
\end{tikzpicture}

\end{center}
Then $(G_1, q)$ is compatible but $(G_2, q)$ is not, since $q_{12}=\frac{1}{8}\neq 0$ and  there is no directed curve $g_{12}$  from $x_1$ to $y_2$ on the support of $G_2$.

\end{example}

Now, we generalize the compatibility of atomic measures $\mu^-, \mu^+$ stated above to those of general measures. 
\begin{definition}
\label{def: Radon_compatibility}
Let $\mu$ and $\nu$ be two Radon measures on $X$ of equal total mass. 
Given $T \in Path (\mu,\nu)$, and $\pi \in Plan (\mu,\nu)$, we say the pair $(T, \pi) $ is compatible if there exists a finite Borel measure $\eta$ on $\Gamma$ such that 
$$T = \int_\Gamma  I_\gamma d\eta, \text{ and } \pi = \int_\Gamma \delta_{ ( p_0(\gamma), p_\infty(\gamma) ) } d\eta .$$
Moreover,
given $T \in Path (\mu,\nu)$ and $\varphi \in Map (\mu,\nu)$, we say the pair $(T,\varphi)$ is compatible if $(T, \pi_\varphi)$ is compatible, where 
$\pi_\varphi = (id \times \varphi)_{\#}\mu$.
\end{definition}

The following Proposition says that Definition \ref{def: Radon_compatibility} is a generalization of  Definition \ref{def: atomic_compatibility}.
\begin{proposition}
Let $\mu^-$ and $\mu^+$ be two atomic measures of equal mass as given in (\ref{eqn: measures}).
Let $G\in Path_0 (\mu^-, \mu^+)$ and $q \in Plan(\mu^-, \mu^+)$. Then $(G,q)$ is compatible in the sense of Definition \ref{def: atomic_compatibility} if and only if $(G,q)$ is compatible in the sense of Definition \ref{def: Radon_compatibility}.
    
\end{proposition}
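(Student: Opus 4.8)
The plan is to exhibit essentially the same object as a witness for both notions of compatibility: the measure on $\Gamma$ concentrated on parametrizations of the directed polyhedral curves $g_{ij}$. For the implication ``Definition~\ref{def: atomic_compatibility} $\Rightarrow$ Definition~\ref{def: Radon_compatibility}'', suppose $(G,q)$ is compatible in the atomic sense. Since $G$ is cycle-free, its support carries no non-trivial cycle, so whenever $I_{g_{ij}}\neq 0$ the curve $g_{ij}$ is simple and admits a parametrization $\gamma_{ij}\in\Gamma$ with $p_0(\gamma_{ij})=x_i$, $p_\infty(\gamma_{ij})=y_j$ and $I_{\gamma_{ij}}=I_{g_{ij}}$. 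I would then set
\[\eta:=\sum_{(i,j)\,:\,I_{g_{ij}}\neq 0} q_{ij}\,\delta_{\gamma_{ij}},\]
a finite Borel measure on $\Gamma$ of total mass $\sum_{i,j}q_{ij}$; since $q_{ij}=0$ whenever $I_{g_{ij}}=0$, nothing is lost. The two identities of Definition~\ref{def: Radon_compatibility} are then mere rewritings of $G=\sum_{i,j}q_{ij}I_{g_{ij}}$ and $q=\sum_{i,j}q_{ij}\delta_{(x_i,y_j)}$; and one also checks that this $\eta$ is a good decomposition of $G$: since on a cycle-free transport path every route $g_{ij}$ from a source to a sink traverses the edges it meets with the orientation of $G$ (removing an edge splits $\mathrm{supp}(G)$ into a part with excess source and a part with excess sink), there is no cancellation in $\sum_{i,j}q_{ij}I_{g_{ij}}$, so $\mathbf M(G)=\sum_{i,j}q_{ij}\mathbf M(I_{g_{ij}})=\int_\Gamma\mathbf M(I_\gamma)\,d\eta$, and $\mathbf M(\partial G)=2\sum_{i,j}q_{ij}=2\,\eta(\Gamma)$.

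For the converse, let $\eta$ be the (good) decomposition of $G$ provided by Definition~\ref{def: Radon_compatibility}, so $G=\int_\Gamma I_\gamma\,d\eta$ and $q=\int_\Gamma\delta_{(p_0(\gamma),p_\infty(\gamma))}\,d\eta$. First, since this last measure is carried by the finite set $\{(x_i,y_j)\}$, $\eta$-a.e.\ $\gamma$ runs from some $x_i$ to some $y_j$; hence $\eta=\sum_{i,j}\eta\lfloor_{\Gamma_{x_i,y_j}}$ and $q_{ij}=\eta(\Gamma_{x_i,y_j})$, which already gives $q=\sum_{i,j}q_{ij}\delta_{(x_i,y_j)}$. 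Second, writing $G=\underline{\underline{\tau}}(M,\theta,\xi)$ and combining (\ref{eqn: theta(x)}) with condition~(a) of Definition~\ref{defn: good decomposition}, I would show that $\eta(\Gamma_x)=0$ for $\mathcal H^1$-a.e.\ $x\notin M$, and deduce by a Fubini argument that $\eta$-a.e.\ $\gamma$ has $\mathrm{Im}(\gamma)\subseteq\mathrm{supp}(G)$. Third, since $G$ is cycle-free, $\mathrm{supp}(G)$ contains at most one simple curve from $x_i$ to $y_j$, namely $g_{ij}$; hence $I_\gamma=I_{g_{ij}}$ for $\eta$-a.e.\ $\gamma\in\Gamma_{x_i,y_j}$, which forces $I_{g_{ij}}\neq 0$ whenever $\eta(\Gamma_{x_i,y_j})>0$, i.e.\ $q_{ij}=0$ whenever $I_{g_{ij}}=0$. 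Integrating then gives $\int_{\Gamma_{x_i,y_j}}I_\gamma\,d\eta=q_{ij}I_{g_{ij}}$, and summing over $(i,j)$ yields $G=\sum_{i,j}q_{ij}I_{g_{ij}}$. Thus $(G,q)$ is compatible in the sense of Definition~\ref{def: atomic_compatibility}.

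The bookkeeping with the (possibly countably many) pieces $\Gamma_{x_i,y_j}$ and the interchange of sums and integrals is routine, being justified by $\mathbf M(G)<\infty$ and $\eta(\Gamma)=\sum_{i,j}q_{ij}<\infty$. I expect the crux to be the middle step of the converse: that the curves of $\eta$ do not leave $\mathrm{supp}(G)$, and that on $\mathrm{supp}(G)$ there is a unique simple route between $x_i$ and $y_j$. The first point is exactly where the good-decomposition hypothesis is indispensable — for an arbitrary positive representation $G=\int_\Gamma I_\gamma\,d\eta$ one could have pairs of curves whose excursions off $\mathrm{supp}(G)$ cancel in the integral, so that $I_\gamma\neq I_{g_{ij}}$ on a set of positive $\eta$-measure; condition~(a) of a good decomposition, via (\ref{eqn: theta(x)}), rules this out. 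The second point uses that a graph-theoretic cycle in $\mathrm{supp}(G)$ would yield (a small constant multiple, consistently oriented, of) a non-trivial cycle on $G$, contradicting Definition~\ref{def: cycle-free current}.
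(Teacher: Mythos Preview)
Your forward implication is essentially the paper's: set $\eta=\sum q_{ij}\delta_{\gamma_{ij}}$ and read off the two identities. (Your extra verification that this $\eta$ is a good decomposition is correct but unnecessary, since Definition~\ref{def: Radon_compatibility} only asks for a finite Borel measure.)

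The converse, however, has a genuine gap. Definition~\ref{def: Radon_compatibility} does \emph{not} assume that the witnessing measure $\eta$ is a good decomposition of $G$; it is merely a finite Borel measure on $\Gamma$ satisfying the two integral identities. Your argument invokes equation~(\ref{eqn: theta(x)}) and condition~(a) of Definition~\ref{defn: good decomposition} to force $\eta$-a.e.\ curve to stay inside $\mathrm{supp}(G)$, but neither of these is available for an arbitrary $\eta$. You yourself flag this (``for an arbitrary positive representation\ldots one could have pairs of curves whose excursions off $\mathrm{supp}(G)$ cancel''), yet you then assume the problem away by calling $\eta$ ``the (good) decomposition\ldots provided by Definition~\ref{def: Radon_compatibility}''. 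Nor can you simply replace $\eta$ by some good decomposition $\eta'$ of $G$, since a different good decomposition will in general induce a different plan, not $q$.

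The paper sidesteps this entirely. It never tries to show that the curves of $\eta$ lie on $\mathrm{supp}(G)$. Instead, from $q_{ij}=\eta(\Gamma_{x_i,y_j})>0$ it extracts the existence of the polyhedral route $g_{ij}$ on $\mathrm{supp}(G)$, forms $\tilde G:=\sum_{(i,j)\in J_q} q_{ij} I_{g_{ij}}$ directly, and computes
\[
\partial(G-\tilde G)=\sum_{(i,j)\in J_q}\bigl(\eta(\Gamma_{x_i,y_j})-q_{ij}\bigr)(\delta_{y_j}-\delta_{x_i})=0.
\]
Since $G-\tilde G$ is a cycle supported on $\mathrm{supp}(G)$ and $G\in Path_0(\mu^-,\mu^+)$ is cycle-free, $G=\tilde G$. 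This uses the cycle-free hypothesis at the level of the current $G-\tilde G$ rather than at the level of individual curves, and so requires nothing of $\eta$ beyond the two integral identities.
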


\begin{proof}
Suppose $(G, q)$ is compatible in the sense of Definition \ref{def: atomic_compatibility}.  
By setting
$$\eta = \sum_{i=1}^M \sum_{j=1}^N q_{ij}  \delta_{g_{_{ij}}}$$
over all $\{1\le i\le M, 1\le j\le N\}$ with $g_{ij}$ exists,
equation (\ref{eqn: comptaible_currents}) gives that
$$G = \int_\Gamma  I_\gamma d\eta \text{ and } q = \int_\Gamma \delta_{ ( p_0(\gamma), p_\infty(\gamma) ) } d\eta .$$
Therefore, $(G,q)$ is also compatible in the sense of Definition \ref{def: Radon_compatibility}.

On the other hand, suppose $(G,q)$ is compatible in the sense of Definition \ref{def: Radon_compatibility}, then there exists a Borel measure $\eta$ on $\Gamma $ such that 
$$G = \int_\Gamma  I_\gamma d\eta \text{ and } q = \int_\Gamma \delta_{ ( p_0(\gamma), p_\infty(\gamma) ) } d\eta.  $$
Since $q\in Plan (\mathbf{a}, \mathbf{b})$, we may write 
\[
q= \sum_{i=1}^M \sum_{j=1}^N q_{ij}  \delta_{(x_i, y_j)}\]
for some $q_{ij}\ge 0$.
Denote
\[J_q:=\{(i,j): 1\le i\le M, 1\le j \le N, \text{ with } q_{ij}>0 \}.\]
and
\[\tilde{\Gamma}:=\bigcup_{(i,j)\in J_q} \Gamma_{x_i, y_j}.\]
Since
\[\int_{\Gamma\setminus \tilde{\Gamma}}\delta_{ ( p_0(\gamma), p_\infty(\gamma) ) } d\eta+\int_{ \tilde{\Gamma}} \delta_{ ( p_0(\gamma), p_\infty(\gamma) ) } d\eta =
\int_\Gamma \delta_{ ( p_0(\gamma), p_\infty(\gamma) ) } d\eta= 
q= \sum_{i=1}^M \sum_{j=1}^N q_{ij}  \delta_{(x_i, y_j)}=\sum_{(i,j)\in J_q} q_{ij}  \delta_{(x_i, y_j)}
,\]
it follows that
\[\int_{\Gamma\setminus \tilde{\Gamma}}\delta_{ ( p_0(\gamma), p_\infty(\gamma) ) } d\eta=0 
\text{ and }
\int_{ \tilde{\Gamma}} \delta_{ ( p_0(\gamma), p_\infty(\gamma) ) } d\eta=
\sum_{(i,j)\in J_q} q_{ij}  \delta_{(x_i, y_j)}.
\]
Thus,
$\eta (\Gamma\setminus \tilde{\Gamma} )=0$
and
\[q=\sum_{(i,j)\in J_q}
\int_{ \Gamma_{x_i, y_j}} \delta_{ ( p_0(\gamma), p_\infty(\gamma) ) } d\eta=
\sum_{(i,j)\in J_q} 
q_{ij}  \delta_{(x_i, y_j)}.\]
Hence for each $1\le i\le M, 1\le j \le N$,
\[ \eta(\Gamma_{x_i, y_j})=q_{ij} \text{ if } (i,j)\in J_q \text{ and } \eta(\Gamma_{x_i, y_j})=0 \text{ if not.}
\]
Now, for each $(i,j)\in J_q$, since $\eta(\Gamma_{x_i, y_j})=q_{ij}>0$
and
\[G= \int_\Gamma  I_\gamma d\eta=\sum_{(i,j)\in J_q}\int_{\Gamma_{x_i, y_j}} I_\gamma d\eta,\]
it follows that there exists a polyhedral 1-curve $g_{ij}$ supported on the support of $G$.
Let
\[\tilde{G} = \sum_{(i,j)\in J_q} q_{ij}  I_{g_{_{ij}}},\]
then
\[\partial(G-\tilde{G})=\partial \left(\sum_{(i,j)\in J_q}\int_{\Gamma_{x_i, y_j}} I_\gamma d\eta-\sum_{(i,j)\in J_q} q_{ij}  I_{g_{_{ij}}} \right) =\sum_{(i,j)\in J_q} \left( \eta \left(\Gamma_{x_i, y_j}\right) - q_{ij}  \right) \left( \delta_{y_j}-\delta_{x_i} \right)=0, \] 
so that $G-\tilde{G}$ is a cycle supported on the support of $G$. Since $G\in Path_0(\mu^-, \mu^+)$, we have $G-\tilde{G}=0$. Therefore, 
\[G=\tilde{G}= \sum_{(i,j)\in J_q} q_{ij}  I_{g_{_{ij}}}.\] Note also that whenever $I_{g_{ij}}=0$, it follows that $(i,j)\not\in J_q$, and thus $q_{ij}=0$. As a result, $(G, q)$ is compatible in the sense of Definition \ref{def: atomic_compatibility}.
\end{proof}

By Theorem \ref{thm: decomposition}, we now have the following theorem:
\begin{theorem} 
\label{thm: compatability}
Let $T\in Path(\mu^-,\mu^+)$ be a cycle-free transport path, where $\mu^-$ and $\mu^+$ are given in (\ref{eqn: measures}).
Then there exist 
\begin{itemize}
\item[(a)] decomposition
\[\mu^- = \mu_\pi^- + \mu_\varphi^-, \  \mu^+ = \mu_\pi^+ + \mu_\varphi^+,\text{ with } \mu_\pi^-(X)=\mu_\pi^+(X),\ \mu_\varphi^-(X)=\mu_\varphi^+(X)\  \] 
where $\mu_\pi^-$ and $\mu_\varphi^-$ have disjoint supports and $|spt(\mu_\pi^-)|\le \binom{N}{2}$ with $|A|$ denoting the cardinality of the set $A$; 

\item[(b)] $T=T_{\pi}+T_{\varphi}$ for some $T_\pi \in Path\left(\mu_\pi^-, \mu_\pi^+\right)$ and 
$T_\varphi \in Path\left(\mu_\varphi^-, \mu_\varphi^+ \right)$. Both $T_\pi$ and $T_\varphi$ are subcurrents of $T$;
\item[(c)]   
a transport map $\varphi \in Map\left(\mu_\varphi^{-}, \mu_\varphi^+\right)$ such that $(T_\varphi, \varphi)$ is compatible;   

\item[(d)] 
a transport plan $\pi\in Plan\left(\mu_\pi^-, \mu_\pi^+\right)$ such that $(T_\pi, \pi)$ is compatible;

\item[(e)]
For each $x_i$ with $\mu_\pi^{-} (\{x_i\}) >0$, 
there are at least two $y_{j_1},y_{j_2}$, 
such that 
$$\pi(\{x_i\} \times \{y_{j_1}\})>0, \pi(\{x_i\} \times \{y_{j_2}\})>0.$$
\end{itemize}

\end{theorem}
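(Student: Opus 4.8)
The plan is to invoke Theorem~\ref{thm: decomposition} directly and then read off the transport map and transport plan from the structure of the pieces $T_0, T_1, \dots, T_N$. First I would apply Theorem~\ref{thm: decomposition} to the cycle-free transport path $T$, obtaining the disjoint decomposition $\{x_1,\dots,x_M\} = B_0 \sqcup B_1 \sqcup \cdots \sqcup B_N$ with $|B_0| \le \binom{N}{2}$, together with the subcurrents $T = T_0 + T_1 + \cdots + T_N$, where each $T_j$ ($1 \le j \le N$) is a single-target transport path from $\mu^-\lfloor_{B_j}$ to $\tilde m_j \delta_{y_j}$, and $T_0$ is a transport path from $\mu^-\lfloor_{B_0}$ to $\sum_{j=1}^N (m_j - \tilde m_j)\delta_{y_j}$. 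I would then set $\mu_\varphi^- := \sum_{j=1}^N \mu_j^- = \mu^-\lfloor_{X \setminus B_0}$, $\mu_\varphi^+ := \sum_{j=1}^N \mu_j^+$, $T_\varphi := \sum_{j=1}^N T_j$, and $\mu_\pi^- := \mu_0^- = \mu^-\lfloor_{B_0}$, $\mu_\pi^+ := \mu_0^+$, $T_\pi := T_0$. Items (a) and (b) are then immediate from (\ref{eqn: decompositions}) and the subcurrent assertions in Theorem~\ref{thm: decomposition}, noting that $B_0$ and $X \setminus B_0$ are disjoint and $\mathrm{spt}(\mu_\pi^-) \subseteq B_0$.

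For item (c), I would construct $\varphi$ by declaring $\varphi(x_i) := y_j$ whenever $x_i \in B_j$ for some $1 \le j \le N$; since the $B_j$ are pairwise disjoint and cover $X \setminus B_0 = \mathrm{spt}(\mu_\varphi^-)$, this is well-defined on the support of $\mu_\varphi^-$, and by the boundary computation in the proof of Theorem~\ref{thm: decomposition} we have $\varphi_\#\mu_\varphi^- = \sum_{j=1}^N \mu^-(B_j)\delta_{y_j} = \sum_{j=1}^N \mu_j^+ = \mu_\varphi^+$, so $\varphi \in Map(\mu_\varphi^-, \mu_\varphi^+)$. To see that $(T_\varphi, \varphi)$ is compatible in the sense of Definition~\ref{def: Radon_compatibility}, I would take the better decomposition $\eta$ of $T$ furnished by Theorem~\ref{thm: decomposition} and restrict it: set $\eta_\varphi := \eta\lfloor_{\bigcup_{j=1}^N \bigcup_{x_i \in B_j} \Gamma_{x_i, y_j}}$. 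Then $T_\varphi = \int_\Gamma I_\gamma \, d\eta_\varphi$ by the definition of the $T_j$, and the push-forward $\int_\Gamma \delta_{(p_0(\gamma), p_\infty(\gamma))} \, d\eta_\varphi = \sum_{j=1}^N \sum_{x_i \in B_j} \eta(\Gamma_{x_i, y_j}) \delta_{(x_i, y_j)} = \sum_{x_i \in X \setminus B_0} \mu^-(\{x_i\}) \delta_{(x_i, \varphi(x_i))} = (id \times \varphi)_\# \mu_\varphi^- = \pi_\varphi$, since for $x_i \in B_j$ we have $\eta(\Gamma_{x_i, y_k}) = 0$ for $k \ne j$. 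That is exactly the compatibility condition.

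For item (d), the same recipe applies to $T_0$: set $\eta_\pi := \eta\lfloor_{\bigcup_{j=1}^N \bigcup_{x_i \in B_0} \Gamma_{x_i, y_j}}$ and $\pi := \int_\Gamma \delta_{(p_0(\gamma), p_\infty(\gamma))} \, d\eta_\pi = \sum_{j=1}^N \sum_{x_i \in B_0} \eta(\Gamma_{x_i, y_j}) \delta_{(x_i, y_j)}$, with matrix entries $\pi_{ij} = \eta(\Gamma_{x_i, y_j})$ for $x_i \in B_0$. One checks $(p_1)_\#\pi = \mu^-\lfloor_{B_0} = \mu_\pi^-$ and $(p_2)_\#\pi = \sum_{j=1}^N (\sum_{x_i \in B_0}\eta(\Gamma_{x_i,y_j}))\delta_{y_j} = \mu_0^+ = \mu_\pi^+$ using the boundary computation (\ref{eqn: boundary_T_0}), so $\pi \in Plan(\mu_\pi^-, \mu_\pi^+)$, and $T_\pi = T_0 = \int_\Gamma I_\gamma \, d\eta_\pi$ gives compatibility. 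Finally, item (e): if $\mu_\pi^-(\{x_i\}) > 0$ then $x_i \in B_0$, which by the definition (\ref{eqn: B_0}) of $B_0$ means $x_i \in X_{j_1}(\eta) \cap X_{j_2}(\eta)$ for some $j_1 < j_2$, i.e.\ $\eta(\Gamma_{x_i, y_{j_1}}) > 0$ and $\eta(\Gamma_{x_i, y_{j_2}}) > 0$; since $\pi(\{x_i\} \times \{y_j\}) = \eta(\Gamma_{x_i, y_j})$, both are positive. I do not expect a serious obstacle here: once Theorem~\ref{thm: decomposition} is in hand, the content is bookkeeping with the better decomposition $\eta$. The one point requiring a little care is making sure the $\eta$-based measures used in items (c) and (d) genuinely produce push-forwards supported on the prescribed atoms, which is precisely where the pairwise disjointness of the $B_j$ (and hence the vanishing of $\eta(\Gamma_{x_i, y_k})$ for the ``wrong'' $k$) is used.
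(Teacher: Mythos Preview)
Your proposal is correct and follows essentially the same approach as the paper: you invoke Theorem~\ref{thm: decomposition}, set $T_\varphi = \sum_{j=1}^N T_j$, $T_\pi = T_0$, define $\varphi$ by $\varphi(x_i) = y_j$ for $x_i \in B_j$, define $\pi$ via the $\eta$-masses $\eta(\Gamma_{x_i,y_j})$ for $x_i \in B_0$, and verify compatibility using the restrictions of the better decomposition $\eta$. The one place where you are arguably slightly more explicit than the paper is in naming the restricted measures $\eta_\varphi$ and $\eta_\pi$ and checking their push-forwards directly, but the content is identical.
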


\begin{proof}
We continue with the same notations used in Theorem \ref{thm: decomposition}. 
Part (a),(b) follows from (\ref{eqn: decomposition_T}) and (\ref{eqn: decompositions}) by setting
$$
\mu_\pi^- := \mu_0^-, \  \mu_\varphi^- := \sum_{j=1}^N \mu_j^-,\ \mu_\pi^+ := \mu_0^+, \  \mu_\varphi^+ := \sum_{j=1}^N \mu_j^+,\ T_\pi := T_0,\ T_\varphi := \sum_{j=1}^N T_j .
$$

For part (c), we define 
$$\varphi := \sum_{j=1}^N y_j \chi_{_{B_j}},$$
where $B_j$'s are subsets of $\{x_1,x_2,\cdots, x_M\}$ given in Theorem \ref{thm: decomposition}.
Since $\mu_j^- = \mu^- \lfloor_{B_j}, \ \mu_j^+ = \tilde{m}_j \delta_{y_j}$, and $B_j$'s are pairwise disjoint for $j=1,2,\ldots,N$, 
we get 
$$\varphi_{\#} (\mu_\varphi^-) = 
\varphi_{\#} \left(\sum_{j=1}^N \mu_j^- \right)=\varphi_{\#} \left(\sum_{j=1}^N \mu^- \lfloor_{B_j}\right)
= \sum_{j=1}^N 
\mu^- (B_j)\delta_{y_j} = \sum_{j=1}^N 
\tilde{m}_j\delta_{y_j}=\mu_\varphi^+.   $$
Therefore, $\varphi$ is a transport map from $\mu_\varphi^-$ to $\mu_\varphi^+$.

We now show that $(T_\varphi, \varphi)$ is compatible. Since 
$$ T_\varphi := \sum_{j=1}^N T_j = \sum_{j=1}^N\sum_{x_i \in B_j} \int_{\Gamma_{x_i,y_j} }  I_\gamma \, d\eta,$$

it is sufficient to show that
\begin{equation}
\label{eqn: pi_varphi}
 \pi_\varphi := 
\left(id \times \varphi\right)_{\#}\mu^- = \sum_{j=1}^N\sum_{x_i \in B_j} \int_{\Gamma_{x_i,y_j} }  \delta_{ (p_0(\gamma)  , p_\infty(\gamma) )}\, d\eta.
\end{equation}

Indeed, for any measurable rectangle $Q\times R$ in $X\times X$,
\begin{eqnarray*}
\pi_\varphi(Q\times R)
&=&
(id \times \varphi )_{\#} \mu^-(Q\times R)= \mu^-(\{x: x\in Q, \varphi(x)\in R\})\\
&=&
\sum_{j=1}^N \mu^-(\{x: x\in Q, \varphi(x)=y_j, y_j\in R\}) 
  =   \sum_{j=1}^N \chi_R(y_j)\mu^-(\{x: x\in Q, \varphi(x)=y_j\})\\
&=&
\sum_{j=1}^N \chi_R(y_j)\mu^-(\{x: x\in Q, x\in B_j\}) 
  =   \sum_{j=1}^N \chi_R(y_j)\mu^-(Q\cap B_j)\\
&=&
\sum_{j=1}^N \chi_R(y_j)\left((p_0)_\#\eta\right)(Q\cap B_j) 
 =  \sum_{j=1}^N \chi_R(y_j)\eta(  p_0^{-1}(Q\cap B_j)  )\\
&=&
\sum_{j=1}^N \chi_R(y_j)\eta(\{\gamma\in \Gamma,  p_0(\gamma)\in Q\cap B_j\})=
\sum_{j=1}^N \chi_R(y_j) \sum_{x_i \in B_j} \int_{\Gamma_{x_i,y_j} }\chi_Q(p_0(\gamma)) d  \eta\\
&=&\sum_{j=1}^N \sum_{x_i \in B_j} \int_{\Gamma_{x_i,y_j} } \chi_Q(p_0(\gamma)) 
 \cdot \chi_R(y_j)d  \eta
=
\sum_{j=1}^N \sum_{x_i \in B_j} \int_{\Gamma_{x_i,y_j} } \chi_Q(p_0(\gamma)) \cdot \chi_R(p_\infty(\gamma))d  \eta\\
&=&
\sum_{j=1}^N \sum_{x_i \in B_j} \int_{\Gamma_{x_i,y_j}, x_i \in Q, y_j \in R }  
\delta_{ p_0(\gamma)} \cdot \delta_{p_\infty(\gamma)} d  \eta 
=
\sum_{j=1}^N\sum_{x_i \in B_j} \int_{\Gamma_{x_i,y_j} }  \delta_{ (p_0(\gamma)  , p_\infty(\gamma) )}\, d\eta(Q\times R).
\end{eqnarray*}
Therefore, (\ref{eqn: pi_varphi}) holds and hence $(T_\varphi, \varphi)$ is compatible.

For part (d), we define 
$$\pi := \sum_{x_i\in B_0} \sum_{j=1}^N \eta\left( \Gamma_{x_i,y_j}  \right) \delta_{(x_i,y_j)} .$$
As shown in (\ref{eqn: boundary_T_0}),
\[\mu_\pi^+-\mu_\pi^-=\mu_0^+-\mu_0^-=
\sum_{j=1}^N  \left(\sum_{x_i \in B_0} \eta(\Gamma_{x_i,y_j})\right) \delta_{y_j}- \sum_{x_i \in B_0}\left(\sum_{j=1}^N \eta(\Gamma_{x_i,y_j})\right) \delta_{x_i}.
\]

This shows that $\pi$ is a transport plan from $\mu_\pi^-$ to $\mu_\pi^+$.
Note that since
\[T_0=\sum_{j=1}^N  \sum_{x_i \in B_0} \int_{\Gamma_{x_i,y_j} }  I_\gamma \, d\eta\]
and
\[
\pi = \sum_{x_i\in B_0} \sum_{j=1}^N \eta \left( \Gamma_{x_i,y_j}  \right) \delta_{(x_i,y_j)} =
\sum_{j=1}^N  \sum_{x_i \in B_0} \int_{\Gamma_{x_i,y_j} }  \delta_{ ( p_0(\gamma), p_\infty(\gamma) ) } \, d\eta,
\]
we have $(T_{\pi}, \pi)$ is compatible.

For part (e), by definition of $\mu_\pi^-$, $x_i \in B_0$ which is defined in Theorem \ref{thm: decomposition}. The result in (e) then follows from the definition of $B_0$ given in (\ref{eqn: B_0}).
\end{proof}

\section{Stair-shaped matrices and Decomposition of stair-shaped transport paths }
In Theorem \ref{thm: compatability}, we decomposed a cycle-free transport path as the sum of a map-compatible path and a plan-compatible path. In this section, we aim to decompose some transport paths as the difference of two map-compatible paths. The family of transport paths that we are interested in are stair-shaped transport paths. To do this, we start with the study of stair-shaped matrices.

\subsection{ Stair-shaped matrices}

\ 

Given $M, N\in \mathbb{N}\cup \{\infty\}$,
let $\mathcal{A}_{M,N}$ denote the collection of all $M\times N$ matrices with non-negative entries.

\begin{definition}
\label{def: stair-shaped}
A matrix $A
\in \mathcal{A}_{M,N}$ is called stair-shaped if there exists two non-decreasing sequences of natural numbers $\{r_1,r_2,\cdots,r_{M+N-1}\}$ and $\{c_1,c_2,\cdots, c_{M+N-1}\}$ with 
$r_k+c_k=k+1$ 
for each $k=1,2,\cdots, M+N-2$, and entries of $A$ that are not located in the positions $\{(r_k, c_k)\}_{k=1}^{M+N-1}$ equal to zero. 
\end{definition}
Note that when $A
\in \mathcal{A}_{M,N}$ is stair-shaped, then $(r_1,c_1)=(1,1)$ and $(r_{M+N-1},c_{M+N-1})=(M,N)$.

\begin{definition}
For each $k=1,2,\cdots, M+N-1$,
a matrix $A\in \mathcal{A}_{M,N}$ is called $k$-stairable if it is in the form of
$$ A = 
\begin{bmatrix}
a_{11} & \cdots &a_{1,c -1} &a_{1,c} &0& \cdots & 0 & \cdots \\
\vdots &        &\vdots &\vdots& \vdots  &  & \vdots \\
a_{r-1,1} & \cdots &a_{r-1,c -1} &a_{r-1,c} &0& \cdots & 0 & \cdots \\
a_{r,1} & \cdots &a_{r, c-1}&a_{r, c} &a_{r, c+1}& \cdots & a_{r, j} & \cdots \\ 
0  & \cdots &0 &a_{r+1, c} &a_{r+1, c+1}& \cdots & a_{r+1, j} & \cdots \\ 
\vdots &  &\vdots &  \vdots   & \vdots &  & \vdots      \\
0 & \cdots &0 &a_{i, c} &a_{i, c+1} &\cdots & a_{i, j} & \cdots \\
\vdots &  &\vdots & \vdots       & \vdots &   & \vdots     \\
\end{bmatrix},
$$
where the leading (i.e., upper left corner) sub-matrix 
$$
\begin{bmatrix}
a_{11} & \cdots &a_{1,c -1} &a_{1,c} \\
\vdots &        &\vdots       &\vdots \\
a_{r-1,1} & \cdots &a_{r_k-1,c -1} &a_{r-1,c} \\
a_{r,1} & \cdots &a_{r,c-1}&a_{r, c} \\ 
\end{bmatrix} $$
is stair-shaped and $k=r+c-1$.
\end{definition}
In particular, each matrix $A\in \mathcal{A}_{M,N}$ is at least $1$-stairable, and 
each stair-shaped matrix $A\in \mathcal{A}_{M,N}$ is  $(M+N-1)$-stairable.

For each $1\le i_1< i_2\le M$ and $1\le j_1< j_2 \le N$, denote
$E [ (i_1,j_1),(i_2,j_2 ) ]$ as the $M \times N$ matrix with $1$ at $(i_1,j_1)$ and $(i_2,j_2)$ entries, with $-1$ at $(i_1,j_2)$ and $(i_2,j_1)$ entries, and $0$ at all other entries.
Each $E [ (i_1,j_1),(i_2,j_2 ) ]$ is called an elementary matrix.

\begin{definition}
For any two matrices $A, B\in \mathcal{A}_{M,N}$, we say $A\cong B$  if there exists a list of real numbers $\{t_k\}_{k=1}^K$ and a list of elementary matrices $\{E_k\}_{k=1}^K$ such that
$B = A + \sum_{k=1}^K t_k E_k$ for some $K\in \mathbb{N} \cup \{ \infty \}$. 
\end{definition}

\begin{theorem}
\label{thm: stairshaped-matrix}
For any matrix $A\in \mathcal{A}_{M,N}$, there exists a stair-shaped matrix $B\in \mathcal{A}_{M,N}$ such that $A \cong B$.
\end{theorem}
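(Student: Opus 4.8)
The plan is to prove Theorem~\ref{thm: stairshaped-matrix} by induction on the stair-index $k$, showing that for each $k = 1, 2, \dots, M+N-2$, any $k$-stairable matrix is $\cong$-equivalent to a $(k+1)$-stairable matrix. Since every matrix in $\mathcal{A}_{M,N}$ is $1$-stairable and a $(M+N-1)$-stairable matrix is stair-shaped, iterating this step (and passing to a limit when $M+N = \infty$) yields the result. Throughout, one must check that the operations keep all entries non-negative, so that we stay inside $\mathcal{A}_{M,N}$; this is the content that makes the statement non-trivial, since an arbitrary sequence of elementary-matrix moves need not preserve non-negativity.

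For the inductive step, suppose $A$ is $k$-stairable with the stair passing through $(r,c)$, where $k = r+c-1$. The submatrix strictly below-and-right of the ``corner column'' and ``corner row'' — i.e.\ the entries $a_{i,j}$ with $i \ge r$, $j \ge c$ forming the lower-right block, together with the already-placed column $\{a_{i,c}: i \ge r\}$ and row $\{a_{r,j}: j\ge c\}$ — is the only place where nonzero entries outside the target stair positions can still occur. I would move the stair one notch further, from $(r,c)$ to either $(r+1,c)$ or $(r,c+1)$, by zeroing out either the rest of row $r$ (entries $a_{r,c+1}, a_{r,c+2}, \dots$) or the rest of column $c$ (entries $a_{r+1,c}, a_{r+2,c}, \dots$), whichever can be done while preserving non-negativity. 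Concretely, to kill $a_{r,j}$ for $j > c$ one adds $-a_{r,j}\,E[(r,j),(i,c)]$ for a suitable pivot row $i > r$ with $a_{i,c}$ large enough to absorb the change; this subtracts $a_{r,j}$ from $a_{r,j}$ (making it $0$) and from $a_{i,c}$, while adding $a_{r,j}$ to $a_{r,c}$ and to $a_{i,j}$ — all harmless to non-negativity except the decrement of $a_{i,c}$. The feasibility question is therefore whether there is enough ``mass'' in column $c$ below row $r$ to absorb the total of row $r$ past column $c$, or symmetrically enough mass in row $r$ past column $c$ to absorb column $c$ below row $r$. A counting/pigeonhole argument comparing the sum $\sum_{j>c} a_{r,j}$ with $\sum_{i>r} a_{i,c}$ shows at least one of the two reductions is always possible: if $\sum_{i>r}a_{i,c} \ge \sum_{j>c}a_{r,j}$ we can clear row $r$ (distributing the subtractions greedily among the rows $i>r$), and otherwise we clear column $c$.

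The main obstacle I anticipate is exactly this bookkeeping: showing that the greedy clearing can be carried out with a (possibly countable) list of elementary moves while (i) never producing a negative entry at any intermediate stage, and (ii) in the $M = \infty$ or $N = \infty$ case, arranging the moves so the infinite sum $\sum_k t_k E_k$ converges entrywise and the limit is genuinely stair-shaped. For (i) one should process the entries of row $r$ (resp.\ column $c$) one at a time, each time choosing a pivot row (resp.\ column) that currently still has positive mass, and track the running partial sums; the inequality from the previous paragraph guarantees a pivot is always available. For (ii), when the index set is infinite one takes the pointwise limit of the finitely-stairable matrices produced by the induction — each fixed entry stabilizes after finitely many steps because the stair eventually leaves any fixed row and column — and verifies the limit matrix has its support on a single stair path; the algorithm referenced in Algorithm~\ref{rem: Stair shaped Algorithm} presumably makes this explicit, and I would lean on it to organize the construction. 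Finally I would note that $\cong$ is transitive (concatenating the lists of elementary matrices), so composing all the inductive steps gives $A \cong B$ with $B$ stair-shaped.
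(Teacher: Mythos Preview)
Your proposal is correct and follows essentially the same strategy as the paper: compare the tail of row $r$ past column $c$ with the tail of column $c$ past row $r$, clear whichever is smaller using elementary moves anchored at $(r,c)$, then recurse on the lower-right block and pass to an entrywise limit in the infinite case.

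The only substantive difference is how the coefficients $t_{i,j}$ are chosen. Where you propose a greedy sequential allocation (process the entries $a_{r,j}$ one at a time, each time finding a pivot $a_{i,c}$ with enough remaining mass), the paper applies a single batch of moves with the proportional weights
\[
t_{i,j}=\frac{a_{i,c}\,a_{r,j}}{\max(u,v)},\qquad u=\sum_{i>r}a_{i,c},\quad v=\sum_{j>c}a_{r,j},
\]
summed over all $i>r$, $j>c$. This clears the entire smaller tail in one stroke and keeps every entry non-negative by direct inspection, so the intermediate-non-negativity bookkeeping you flag as an obstacle simply disappears. Your greedy version is fine too (it is the northwest-corner rule for a transportation problem with supplies $a_{i,c}$ and demands $a_{r,j}$, feasible precisely because of the sum comparison), but the proportional formula is cleaner to write down. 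One small notational slip: your $-a_{r,j}\,E[(r,j),(i,c)]$ does not match the paper's convention, which requires the second index pair to dominate the first coordinatewise; what you mean is $+a_{r,j}\,E[(r,c),(i,j)]$.
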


\begin{proof}
\textbf{Step 1: }
Let
$$ A = 
\begin{bmatrix}
a_{11} &  a_{12} & \cdots & a_{1j} & \cdots \\
a_{21} &  a_{22} & \cdots & a_{2j} & \cdots \\ 
\vdots &  \vdots &        & \vdots &        \\
a_{i1} &  a_{i2} & \cdots & a_{ij} & \cdots \\
\vdots &  \vdots &        & \vdots &        \\
\end{bmatrix},
$$
and
\[
u_1 = \sum_{i=2}^M a_{i 1}  \text{ and } 
v_1 = \sum_{j=2}^N a_{1 j} .
\]
If $u_1 = 0 $, and since all entries in $A$ are non-negative, then we get 
$$A_1 = A = 
\begin{bmatrix}
a_{11} &  a_{12} & \cdots & a_{1j} & \cdots \\
0      &  a_{22} & \cdots & a_{2j} & \cdots \\ 
\vdots &  \vdots &        & \vdots &        \\
0      &  a_{i2} & \cdots & a_{ij} & \cdots \\
\vdots &  \vdots &        & \vdots &        \\
\end{bmatrix} .$$
If $u_1 \not=0$, and $u_1 \ge v_1$ then we do the following transformation and denote
$$ A_1 = A  +  \sum_{i=2}^\infty \sum_{j=2}^\infty   \frac{a_{i1} a_{1j}}{u_1} 
E [ (1,1),(i,j ) ] .$$
This implies 
\begin{eqnarray*}
A_1 &=& 
\begin{bmatrix}
a_{11} + \sum_{i=2}^\infty \sum_{j=2}^\infty   \frac{a_{i1} a_{1j}}{u_1} &  a_{12} - \sum_{i=2}^\infty   \frac{a_{i1} a_{12}}{u_1}& \cdots & a_{1j} - \sum_{i=2}^\infty   \frac{a_{i1} a_{1j}}{u_1}& \cdots \\
a_{21} -  \sum_{j=2}^\infty   \frac{a_{21} a_{1j}}{u_1}  &  a_{22} + \frac{a_{21} a_{12}}{u_1} & \cdots & a_{2j} +  \frac{a_{21} a_{1j}}{u_1} & \cdots \\ 
\vdots &  \vdots &        & \vdots &        \\
a_{i1} -  \sum_{j=2}^\infty   \frac{a_{i1} a_{1j}}{u_1}  &  a_{i2} +  \frac{a_{i1} a_{12}}{u_1}& \cdots & a_{ij}+ \frac{a_{i1} a_{1j}}{u_1} & \cdots \\
\vdots &  \vdots &        & \vdots &        \\
\end{bmatrix}   \\
&=&
\begin{bmatrix}
a_{11} + v_1 &  0 & \cdots & 0 & \cdots \\
\left(1-\frac{v_1}{u_1}\right) a_{21} &  a_{22} + \frac{a_{21} a_{12}}{u_1} & \cdots & a_{2j} +  \frac{a_{21} a_{1j}}{u_1} & \cdots \\ 
\vdots &  \vdots &        & \vdots &        \\
\left(1-\frac{v_1}{u_1}\right)a_{i1} &  a_{i2} +  \frac{a_{i1} a_{12}}{u_1}& \cdots & a_{ij}+ \frac{a_{i1} a_{1j}}{u_1} & \cdots \\
\vdots &  \vdots &        & \vdots &        \\
\end{bmatrix}   . \\
\end{eqnarray*}
If $u_1 \not= 0$, and $u_1 \le v_1$, we consider the following transformation:
$$A_1 = A + \sum_{i=2}^\infty \sum_{j=2}^\infty   \frac{a_{i1} a_{1j}}{v_1} 
E [ (1,1),(i,j ) ] , $$
and 
\begin{eqnarray*}
A_1 &=&
\begin{bmatrix}
a_{11} + \sum_{i=2}^\infty \sum_{j=2}^\infty   \frac{a_{i1} a_{1j}}{v_1} &  a_{12} - \sum_{i=2}^\infty   \frac{a_{i1} a_{12}}{v_1}& \cdots & a_{1j} - \sum_{i=2}^\infty   \frac{a_{i1} a_{1j}}{v_1}& \cdots \\
a_{21} -  \sum_{j=2}^\infty   \frac{a_{21} a_{1j}}{v_1}  &  a_{22} + \frac{a_{21} a_{12}}{v_1} & \cdots & a_{2j} +  \frac{a_{21} a_{1j}}{v_1} & \cdots \\ 
\vdots &  \vdots &        & \vdots &        \\
a_{i1} -  \sum_{j=2}^\infty   \frac{a_{i1} a_{1j}}{v_1}  &  a_{i2} +  \frac{a_{i1} a_{12}}{v_1}& \cdots & a_{ij}+ \frac{a_{i1} a_{1j}}{v_1} & \cdots \\
\vdots &  \vdots &        & \vdots &        \\
\end{bmatrix}   \\
&=&
\begin{bmatrix}
a_{11} + u_1 &  \left(1-\frac{u_1}{v_1}\right)a_{12} & \cdots & \left(1-\frac{u_1}{v_1}\right)a_{1j} & \cdots \\
0 &  a_{22} + \frac{a_{21} a_{12}}{v_1} & \cdots & a_{2j} +  \frac{a_{21} a_{1j}}{v_1} & \cdots \\ 
\vdots &  \vdots &        & \vdots &        \\
0 &  a_{i2} +  \frac{a_{i1} a_{12}}{v_1}& \cdots & a_{ij}+ \frac{a_{i1} a_{1j}}{v_1} & \cdots \\
\vdots &  \vdots &        & \vdots &        \\
\end{bmatrix} .  \\
\end{eqnarray*}
Hence, $A \cong A_1$ where $A_1$ is of the form:

$$\begin{bmatrix}
a_{11} &  a_{12} & \cdots & a_{1j} & \cdots \\
0      &  a_{22} & \cdots & a_{2j} & \cdots \\ 
\vdots &  \vdots &        & \vdots &        \\
0      &  a_{i2} & \cdots & a_{ij} & \cdots \\
\vdots &  \vdots &        & \vdots &        \\
\end{bmatrix}  
\text{  or }
\begin{bmatrix}
a_{11} &  0      & \cdots & 0      & \cdots \\
a_{21} &  a_{22} & \cdots & a_{2j} & \cdots \\ 
\vdots &  \vdots &        & \vdots &        \\
a_{i1} &  a_{i2} & \cdots & a_{ij} & \cdots \\
\vdots &  \vdots &        & \vdots &        \\
\end{bmatrix} 
$$
and $(r_1,c_1) = (1,1)$. 
Here and in the following steps, for simplicity of notations, we continue using the same notation, $a_{ij}$'s, to denote non-negative entries.

\textbf{Step 2:} 
Set $A_1 = f(A)$, note that $A_1 \cong A$ is $1$-stairable.  
For each $k \in \mathbb{N}$,
if $A_k\cong A$ is $k$-stairable,
we construct a $(k+1)$-stairable matrix $A_{k+1}\cong A$ as follows.
Given
$$ A_{k} = 
\begin{bmatrix}
a_{11} & \cdots &a_{1,c_k -1} &a_{1,c_k} &0& \cdots & 0 & \cdots \\
\vdots &        &\vdots &\vdots& \vdots  &  & \vdots \\
a_{r_k-1,1} & \cdots &a_{r_k-1,c_k -1} &a_{r_k-1,c_k} &0& \cdots & 0 & \cdots \\
a_{r_k1} & \cdots &a_{r_k c_k-1}&a_{r_k c_k} &a_{r_k,c_k+1}& \cdots & a_{r_k,j} & \cdots \\ 
0  & \cdots &0 &a_{r_k+1, c_k} &a_{r_k+1,c_k+1}& \cdots & a_{r_k+1,j} & \cdots \\ 
\vdots &  &\vdots &  \vdots   & \vdots &  & \vdots      \\
0 & \cdots &0 &a_{i,c_k} &a_{i,c_k+1} &\cdots & a_{ij} & \cdots \\
\vdots &  &\vdots & \vdots       & \vdots &   & \vdots     \\
\end{bmatrix},
$$
where the upper left corner sub-matrix 
$$ S = 
\begin{bmatrix}
a_{11} & \cdots &a_{1,c_k -1} &a_{1,c_k} \\
\vdots &        &\vdots       &\vdots \\
a_{r_k-1,1} & \cdots &a_{r_k-1,c_k -1} &a_{r_k-1,c_k} \\
a_{r_k1} & \cdots &a_{r_k c_k}&a_{r_k c_k} \\ 
\end{bmatrix} $$
is stair-shaped (which implies that $r_k+c_k-1=k$), $S \in \mathcal{A}_{r_k,c_k}$,
and let
$$B= f \left(
\begin{bmatrix}
a_{r_k c_k} &a_{r_k,c_k+1}& \cdots & a_{r_k,j} & \cdots \\ 
a_{r_k+1, c_k} &a_{r_k+1,c_k+1}& \cdots & a_{r_k+1,j} & \cdots \\ 
 \vdots   & \vdots &  & \vdots      \\
a_{i,c_k} &a_{i,c_k+1} &\cdots & a_{ij} & \cdots \\
\vdots       & \vdots &   & \vdots     \\
\end{bmatrix} \right) 
=
\begin{bmatrix}
b_{r_k c_k} &b_{r_k,c_k+1}& \cdots & b_{r_k,j} & \cdots \\ 
b_{r_k+1, c_k} &b_{r_k+1,c_k+1}& \cdots & b_{r_k+1,j} & \cdots \\ 
 \vdots   & \vdots &  & \vdots      \\
b_{i,c_k} &b_{i,c_k+1} &\cdots & b_{ij} & \cdots \\
\vdots       & \vdots &   & \vdots     \\
\end{bmatrix} 
.$$
Then we define
$$A_{k+1} = 
\begin{bmatrix}
a_{11} & \cdots &a_{1,c_k -1} &a_{1,c_k} &0& \cdots & 0 & \cdots \\
\vdots &        &\vdots &\vdots& \vdots  &  & \vdots \\
a_{r_k-1,1} & \cdots &a_{r_k-1,c_k -1} &a_{r_k-1,c_k} &0& \cdots & 0 & \cdots \\
a_{r_k1} & \cdots &a_{r_k c_k-1}&b_{r_k c_k} &b_{r_k,c_k+1}& \cdots & b_{r_k,j} & \cdots \\ 
0  & \cdots &0 &b_{r_k+1, c_k} &b_{r_k+1,c_k+1}& \cdots & b_{r_k+1,j} & \cdots \\ 
\vdots &  &\vdots &  \vdots   & \vdots &  & \vdots      \\
0 & \cdots &0 &b_{i,c_k} &b_{i,c_k+1} &\cdots & b_{ij} & \cdots \\
\vdots &  &\vdots & \vdots       & \vdots &   & \vdots     \\
\end{bmatrix}. $$
By definition of $f$, two sequences $(r_k)_{k=1}^\infty$ and $(c_k)_{k=1}^\infty$ can be constructed as follows:
\begin{itemize}
\item [(1)] 
If
$$\begin{bmatrix}
     b_{r_k,c_k+1} & \ldots& b_{r_k,j}&\ldots
    \end{bmatrix} \not=
    \begin{bmatrix}
     0 & \ldots& 0&\ldots
    \end{bmatrix},$$
then $(r_{k+1},c_{k+1})= (r_{k},c_{k} + 1)$;
\item [(2)] 
If 
$$\begin{bmatrix}
     b_{r_k,c_k+1} & \ldots& b_{r_k,j}&\ldots
    \end{bmatrix}=
    \begin{bmatrix}
     0 & \ldots& 0&\ldots
    \end{bmatrix}$$
and 
$$\begin{bmatrix}
b_{r_{k}+1,c_k} & \ldots & b_{i,c_k} & \ldots     
\end{bmatrix}^T \not= 
\begin{bmatrix}
0 & \ldots & 0 & \ldots     
\end{bmatrix}^T   ,$$
then $(r_{k+1},c_{k+1})= (r_{k}+1,c_{k})$;

\item [(3)]
If
$$\begin{bmatrix}
b_{r_k,c_k+1} & \ldots& b_{r_k,j}&\ldots
\end{bmatrix} 
=
\begin{bmatrix}
0 & \ldots& 0&\ldots
\end{bmatrix}$$
and 
$$\begin{bmatrix}
b_{r_{k}+1,c_k} & \ldots & b_{i,c_k} & \ldots     
\end{bmatrix}^T 
= 
\begin{bmatrix}
0 & \ldots & 0 & \ldots     
\end{bmatrix}^T  ,$$
then $(r_{k+1},c_{k+1})= (r_{k},c_{k} + 1)$.
\end{itemize}
This gives $(r_k)_{k=1}^\infty$, $(c_k)_{k=1}^\infty$ are non-decreasing sequences with $r_{k+1} + c_{k+1} = r_k +c_k +1=k+2$.
By doing so, we get a $(k+1)$-stairable matrix $A_{k+1}$ with $A \cong A_{k} \cong A_{k+1}$. 
Note that in this construction we have
\begin{equation}
\label{eqn: matrix_comparsion}  
A_{k+1}(i,j) = A_{k}(i,j), \quad \text{ for } i<r_k \text{ or } j<c_k.
\end{equation}
Moreover,
$$ A_{k+1} = A_{k} + \sum_{l=1}^\infty t_{k,l}  E_{k,l}$$
for some 
$t_{k,l} \in \mathbb{R}$, and $E_{k,l}$'s are elementary matrices.
Set 
$$A_\infty := A_1 + \sum_{k=1}^\infty \sum_{l=1}^\infty t_{k,l} E_{k,l},$$ 
then
$$A_\infty  \cong A_1 \cong A.$$

Note that for each $i=1,\ldots, M$ and $j=1, \dots, N$, by (\ref{eqn: matrix_comparsion}) and $r_k+c_k-1=k$, the sequence $A_k(i,j)$ is eventually constant when $k$ is large enough.  
Thus, $A_\infty (i,j) = \lim_{k\to \infty} A_k(i,j)$ is well defined, stair-shaped, with non-negative entries.
\end{proof}

After knowing the existence of the stair-shaped matrix $B$ using Theorem \ref{thm: stairshaped-matrix}, one may use the following algorithm to recursively find its entries.

\begin{algorithm}
\label{rem: Stair shaped Algorithm}

\ 

{\it Input:} \quad\quad A matrix $A=[a_{ij}]\in \mathcal{A}_{M,N}$.
    
{\it Output:} \quad \  A stair-shaped matrix  $B=[b_{ij}]\in \mathcal{A}_{M,N}$ with $B\cong A$.
    
{\it Algorithm:} One may recursively calculate the entries of $B$ as follows: 
\begin{itemize}
   \item Step 1: Start with $i_0=1, j_0=1$, set
\[R=\sum_{j=1}^N a_{1  j} \text{ and } C=\sum_{i=1}^M a_{i  1}.\]
If $R\leq C$, then $b_{1  1}=R,\ b_{1 j}=0 \text{ for all }j>1$. Otherwise, $b_{1  1}=C$ and  $b_{i 1}=0$ for all $ i>1$.

\ 

\item Step 2:  For each $(i_0,j_0)$ with $b_{i_0, j_0}$ unknown and $b_{ij}$ is known for all $i<i_0$ and $j<j_0$, let 
\[R=\sum_{j=1}^N a_{i_0 , j}-\sum_{j<j_0}b_{i_0 ,j},\; 
C=\sum_{i=1}^M a_{i , j_0}-\sum_{i<i_0}b_{i ,j_0}. \]
If $R\leq C$, set
\[b_{i_0 , j_0}=R,\ b_{i_0 ,j}=0 \text{ for all }j>j_0.\]
Otherwise, when $R>C$, set
\[b_{i_0 , j_0}=C,\ b_{i, j_0}=0 \text{ for all }i>i_0.\]
    
\end{itemize}

\end{algorithm}
Using Step 2 recursively, one can calculate all entries of the stair-shaped matrix $B$.

\subsection{Stair-shaped good decomposition}

\begin{definition}
\label{def: stair shaped matrix corresponds to eta}

Let $\eta$ be a finite measure on $\Gamma$ with $(p_0)_\#\eta=\mu^-$ and $(p_\infty)_\#\eta=\mu^+$.
The representing matrix of $\eta$ is the matrix $A=[a_{ij}]\in \mathcal{A}_{M,N}$ such that $a_{ij}=\eta(\Gamma_{x_i, y_j})$ for each $i, j$. We say that $\eta$ is stair-shaped if its representing matrix $A$ is stair-shaped.
A transport path $T\in Path(\mu^-, \mu^+)$ is called stair-shaped if there exists a good decomposition $\eta$ of $T$ such that $\eta$ is stair-shaped.
\end{definition}

\begin{proposition}
Any stair-shaped good decomposition $\eta$ of $T$ is a better decomposition of $T$.

\end{proposition}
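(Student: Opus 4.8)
The plan is to reduce the proposition to a short combinatorial statement about the positions of the nonzero entries of a stair-shaped matrix, and then prove that statement. Write $A=[a_{ij}]\in\mathcal{A}_{M,N}$ for the representing matrix of $\eta$, so that $a_{ij}=\eta(\Gamma_{x_i,y_j})$; this is well defined since a good decomposition of a path in $Path(\mu^-,\mu^+)$ satisfies $(p_0)_\#\eta=\mu^-$ and $(p_\infty)_\#\eta=\mu^+$ by (\ref{eqn: startingending measure}), and $A$ is stair-shaped by hypothesis. To verify Definition~\ref{def: better_decom}, I would take any $1\le i_1<i_2\le M$, $1\le j_1<j_2\le N$ with $C[(i_1,j_1),(i_2,j_2),\eta]=0$; passing to boundaries and using the computation of $\partial C[(i_1,j_1),(i_2,j_2),\eta]$ preceding (\ref{eqn: equalsgn}), this forces the four numbers $\mathrm{sgn}(a_{i_1j_1}),\mathrm{sgn}(a_{i_1j_2}),\mathrm{sgn}(a_{i_2j_1}),\mathrm{sgn}(a_{i_2j_2})$ to equal a common value $c\in\{0,1\}$. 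So it suffices to rule out $c=1$, i.e.\ to show that a stair-shaped matrix never has four strictly positive entries at the corners of a rectangle $a_{i_1j_1},a_{i_1j_2},a_{i_2j_1},a_{i_2j_2}$ with $i_1<i_2$ and $j_1<j_2$; once $c=1$ is excluded, $c=0$, which is exactly the conclusion Definition~\ref{def: better_decom} demands.

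This combinatorial claim I would prove by contradiction, exploiting the rigidity of the staircase. Since $r_k+c_k=k+1$ for all $k$, the staircase positions $(r_k,c_k)$ are pairwise distinct, and since $(r_k)$ and $(c_k)$ are non-decreasing, moving from index $k$ to $k+1$ increases exactly one of $r_k,c_k$ by $1$ and fixes the other. Now suppose the four corner entries are all positive. Since every off-staircase entry of $A$ vanishes, there are indices $a,b,p$ with $(r_a,c_a)=(i_1,j_1)$, $(r_b,c_b)=(i_1,j_2)$, $(r_p,c_p)=(i_2,j_1)$. Monotonicity of $(c_k)$ and $c_a=j_1<j_2=c_b$ give $a<b$; monotonicity of $(r_k)$ and $r_a=i_1<i_2=r_p$ give $a<p$. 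Looking at index $a+1\le\min\{b,p\}$, monotonicity yields $i_1=r_a\le r_{a+1}\le r_b=i_1$ and $j_1=c_a\le c_{a+1}\le c_p=j_1$, so $(r_{a+1},c_{a+1})=(i_1,j_1)=(r_a,c_a)$, contradicting distinctness of the staircase positions. (The fourth entry $a_{i_2j_2}$ is not needed.)

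Combining the two steps finishes the proof. The only genuinely non-routine ingredient is the combinatorial lemma of the middle paragraph; the rest is bookkeeping with objects already in the text ($C[\cdot]$, $\partial C[\cdot]$, (\ref{eqn: equalsgn}), Definition~\ref{def: better_decom}). I expect that lemma to be the main point, but the one-coordinate-at-a-time structure of the staircase makes the contradiction immediate once the indices $a,b,p$ are set up, so I do not anticipate a real obstacle.
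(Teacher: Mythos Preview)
Your proposal is correct and follows essentially the same route as the paper: reduce via $\partial C[(i_1,j_1),(i_2,j_2),\eta]=0$ and (\ref{eqn: equalsgn}) to the dichotomy $c\in\{0,1\}$, then rule out $c=1$ by a combinatorial fact about stair-shaped matrices. The only difference is that the paper merely asserts that the four positions $(i_1,j_1),(i_1,j_2),(i_2,j_1),(i_2,j_2)$ cannot all lie on the staircase (``there is no way to align the indexes \ldots\ such that both two coordinates are non-decreasing sequences''), whereas you supply a clean rigorous argument for this using the one-step structure $r_k+c_k=k+1$; your contradiction in fact uses only three of the four corners, which is a slight sharpening.
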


\begin{proof}
By Definition \ref{def: better_decom}, suppose there exist $1\le i_1< i_2\le M$ and $1\le j_1<j_2\leq N$, with 
$$S_{i_1,j_1} (\eta) - S_{i_1,j_2}  (\eta)- S_{i_2,j_1} (\eta) + S_{i_2,j_2} (\eta)=0,$$
then direct calculation from (\ref{eqn: equalsgn}) gives either 
$$\eta(\Gamma_{i_1,j_1}) =\eta(\Gamma_{i_1,j_2})=\eta(\Gamma_{i_2,j_1}) = \eta(\Gamma_{i_2,j_2})=0,$$
or 
$$\eta(\Gamma_{i_1,j_1})> 0, \eta(\Gamma_{i_1,j_2})> 0, \eta(\Gamma_{i_2,j_1})> 0, \eta(\Gamma_{i_2,j_2})> 0.$$
The latter case cannot appear since $\eta$ is stair-shaped and there is no way to align the indexes $$(i_1,j_1), (i_1,j_2),(i_2,j_1),(i_2,j_2),$$
such that both two coordinates are non-decreasing sequences.
As a result, $\eta$ is a better decomposition.

\end{proof}

A stair-shaped path is not necessarily cycle-free. For instance, the transport path $T$ given in Remark \ref{rmk: cycle-free} is stair-shaped because $\eta= \delta_{\gamma_{x_1,y_1}} + \delta_{\gamma_{x_2,y_2}}$ is a stair-shaped good decomposition of $T$. However, $T$ is not cycle-free.

\begin{example}
\label{ex: stair shaped decomposition and measures}
Let $T$ be a transport path from
\[\mu^-=9\delta_{x_1}+9\delta_{x_2}+9\delta_{x_3}+27\delta_{x_4}+27\delta_{x_5} \text{ to } 
\mu^+=36\delta_{y_1} +9\delta_{y_2} + 18\delta_{y_3} +9\delta_{y_4} +9\delta_{y_5} \]
given as shown in the following figure.

\begin{center}

\begin{tikzpicture} [>=latex]
\filldraw[black] (0,4) circle (1pt) node[anchor=east]{$x_1$};
\filldraw[black] (0,3) circle (1pt) node[anchor=east]{$x_2$};
\filldraw[black] (0,2) circle (1pt) node[anchor=east]{$x_3$};
\filldraw[black] (0,1) circle (1.73pt) node[anchor=east]{$x_4$};
\filldraw[black] (0,0) circle (1.73pt) node[anchor=east]{$x_5$};

\filldraw[black] (6,4) circle (2pt) node[anchor=west]{$y_1$};
\filldraw[black] (6,3) circle (1pt) node[anchor=west]{$y_2$};
\filldraw[black] (6,2) circle (1.4pt) node[anchor=west]{$y_3$};
\filldraw[black] (6,1) circle (1pt) node[anchor=west]{$y_4$};
\filldraw[black] (6,0) circle (1pt) node[anchor=west]{$y_5$};

\filldraw[black] (3,0) circle (0pt) node[anchor=north]{$T$};

\filldraw[black] (0.55,0.6) circle (0pt) node[anchor=north]{$27$};
\filldraw[black] (0.27,1.1) circle (0pt) node[anchor=north]{$27$};
\filldraw[black] (0.3,2) circle (0pt) node[anchor=north]{$9$};
\filldraw[black] (0.45,3) circle (0pt) node[anchor=north]{$9$};
\filldraw[black] (0.3,3.65) circle (0pt) node[anchor=north]{$9$};
\filldraw[black] (1.5,2.4) circle (0pt) node[anchor=south]{$18$};
\filldraw[black] (0.85,1.35) circle (0pt) node[anchor=west]{$54$};
\filldraw[black] (1.4,1.7) circle (0pt) node[anchor=south]{$63$};
\filldraw[black] (3,1.75) circle (0pt) node[anchor=south]{$81$};
\filldraw[black] (4.5,2) circle (0pt) node[anchor=south]{$63$};
\filldraw[black] (4.6,1.35) circle (0pt) node[anchor=north]{$18$};
\filldraw[black] (4.95,3.35) circle (0pt) node[anchor=north]{$45$};
\filldraw[black] (5.9,3.85) circle (0pt) node[anchor=north]{$36$};
\filldraw[black] (5.65,3.3) circle (0pt) node[anchor=north]{$9$};
\filldraw[black] (5.4,2.3) circle (0pt) node[anchor=north]{$18$};
\filldraw[black] (5.9,0.85) circle (0pt) node[anchor=north]{$9$};
\filldraw[black] (5.65,0.35) circle (0pt) node[anchor=north]{$9$};

\filldraw[black] (2,1.75) circle (0pt) ;
\filldraw[black] (4.5,1.85) circle (0pt) ;

\filldraw[black] (5,2.5) circle (0pt) ;
\filldraw[black] (5.5,0.5) circle (0pt) ;

\filldraw[black] (5.5,3.5) circle (0pt) ;

\draw[->] (0,4)--(1,2.85);
\draw[->] (0,3)--(1,2.85);

\draw[->] (0,2)--(1,1.75);

\draw[thick,->] (0,1)--(0.75,1.15);
\draw[thick,->] (0,0)--(0.75,1.15);

\draw[very thick, ->] (0.75,1.15)--(1,1.75);

\draw[semithick, ->] (1,2.85)--(2,1.75);
\draw[very thick, ->] (1,1.75)--(2,1.75);
\draw[ultra thick, ->] (2,1.75)--(4.5,1.85);

\draw[very thick, ->] (4.5,1.85)--(5,2.5);
\draw[semithick, ->] (4.5,1.85)--(5.5,0.5);

\draw[line width=1pt, ->] (5,2.5)--(5.5,3.5);
\draw[semithick, ->] (5,2.5)--(6,2);

\draw[->] (5.5,3.5)--(6,3);
\draw[thick, ->] (5.5,3.5)--(6,4);

\draw[->] (5.5,0.5)--(6,1);
\draw[->] (5.5,0.5)--(6,0);

\end{tikzpicture}

\end{center}
For each $(i,j)$, let $\gamma_{x_i, y_j}\in \Gamma$ be the unique polyhedral curve from $x_i$ to $y_j$ on $T$, and $a_{i,j}$ be the $(i,j)$-entry of the matrix
$$A = 
\begin{bmatrix}
4& 1 & 2 & 1 & 1 \\
4& 1 & 2 & 1 & 1 \\
4& 1 & 2 & 1 & 1 \\
12& 3 & 6 & 3 & 3 \\
12& 3 & 6 & 3 & 3 \\
\end{bmatrix}.
$$
Then
$$\eta_A:=\sum_{ i,j =1}^5 a_{ij}\delta_{\gamma_{x_i, y_j}}$$
is a good but not a better decomposition of $T$. Using Algorithm \ref{rem: Stair shaped Algorithm}, the corresponding stair-shaped matrix of $A$ is given by
$$
B = 
\begin{bmatrix}
9& 0 & 0 & 0 & 0 \\
9& 0 & 0 & 0 & 0 \\
9& 0 & 0 & 0 & 0 \\
9& 9 & 9 & 0 & 0 \\
0& 0 & 9 & 9 & 9
\end{bmatrix}. $$
The corresponding measure 
$$\eta_B:=\sum_{ i,j =1}^5 b_{ij}\delta_{\gamma_{x_i, y_j}}$$
on $\Gamma$ is a stair-shaped good decomposition of $T$, which is automatically a better decomposition of $T$.
\end{example}

The following theorem says that any stair-shaped transport path can be decomposed as the sum of two subcurrents generated by two transport maps.

\begin{theorem}
\label{thm: stair shaped induced transport maps}
Let $T\in Path(\mu^-,\mu^+)$ be a stair-shaped transport path, where $\mu^-$ and $\mu^+$ are given in (\ref{eqn: measures}).
Then there exist decomposition
\[\mu^-=\mu_1^-+\mu_2^-, \mu^+=\mu_1^++\mu_2^+, \text{ and } T=T_1+T_2\]
such that
\begin{itemize}
\item[(a)] for each $i=1,2$, $T_i$ is a subcurrent of $T$ and $T_i\in Path(\mu_i^-, \mu_i^+)$, 
\item[(b)] there exists transport maps $\varphi \in Map(\mu_1^-, \mu_1^+)$ and $\psi\in Map(\mu_2^+, \mu_2^-)$ such that both $( T_1,\varphi)$ and $(-T_2,\psi)$ are compatible.
\end{itemize}
\end{theorem}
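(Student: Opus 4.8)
The plan is to reduce the theorem to a short combinatorial fact about stair-shaped matrices and then to transfer a splitting of that matrix back through the correspondence (good decomposition) $\leftrightarrow$ (finite measure on $\Gamma$) $\leftrightarrow$ (current) $\leftrightarrow$ (transport map). First I fix a stair-shaped good decomposition $\eta$ of $T$ (one exists by the definition of a stair-shaped transport path) and let $B=[b_{ij}]$, $b_{ij}=\eta(\Gamma_{x_i,y_j})$, be its representing matrix, which is stair-shaped with support positions $\{(r_k,c_k)\}_{k=1}^{M+N-1}$ as in Definition~\ref{def: stair-shaped}. The only structural input I need is: \emph{if $b_{ij}>0$, $b_{i'j'}>0$ and $i<i'$, then $j\le j'$}. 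This is immediate, since a nonzero entry $(i,j)$ equals some $(r_p,c_p)$ with $p=i+j-1$, and if $(i,j)=(r_p,c_p)$, $(i',j')=(r_q,c_q)$ with $i<i'$, then $p\ne q$ and $p>q$ is impossible (it would give $i=r_p\ge r_q=i'$), so $p<q$, whence $j=c_p\le c_q=j'$; this is exactly the obstruction already exploited in the Proposition preceding the theorem.

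Next I decompose $B$. Let $B_1$ be obtained from $B$ by keeping, in each row, only its leftmost nonzero entry and zeroing out all others, and set $B_2:=B-B_1$. Then $B_1,B_2\ge 0$, $B=B_1+B_2$, and $B_1$ has at most one nonzero entry per row. I claim $B_2$ has at most one nonzero entry per column. If not, there are $i_1<i_2$ and a column $j$ with $(B_2)_{i_1,j}>0$ and $(B_2)_{i_2,j}>0$. Since $(B_2)_{i_2,j}>0$, the entry $b_{i_2,j}$ was not moved into $B_1$, so $(i_2,j)$ is not the leftmost nonzero entry of row $i_2$; hence some $j_2<j$ has $b_{i_2,j_2}>0$. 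Applying the structural fact to the entries $(i_1,j)$ (nonzero because $(B_2)_{i_1,j}>0$) and $(i_2,j_2)$, with $i_1<i_2$, gives $j\le j_2<j$, a contradiction. Thus $B=B_1+B_2$ with $B_1$ (resp.\ $B_2$) having at most one nonzero entry per row (resp.\ per column).

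Now I transfer this back. Let $\Gamma^1$ be the union of the sets $\Gamma_{x_i,y_j}$ over those $(i,j)$ that are the leftmost nonzero position of row $i$, and $\Gamma^2:=\Gamma\setminus\Gamma^1$; put $\eta_1:=\eta\lfloor_{\Gamma^1}$ and $\eta_2:=\eta\lfloor_{\Gamma^2}$, so that $\eta_1+\eta_2=\eta$, $\eta_1,\eta_2\le\eta$, and $\eta_1,\eta_2$ have representing matrices $B_1,B_2$ (note this is an honest partition of the curves, not a rescaling). Set $T_a:=\int_\Gamma I_\gamma\,d\eta_a$, $\mu_a^-:=(p_0)_\#\eta_a$ and $\mu_a^+:=(p_\infty)_\#\eta_a$ for $a=1,2$. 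Since $\eta_a\le\eta$, each $\eta_a$ is a good decomposition of $T_a$, so $\mathbf{M}(T_1)+\mathbf{M}(T_2)=\int_\Gamma\mathbf{M}(I_\gamma)\,d\eta=\mathbf{M}(T)$; hence $T_1,T_2$ are subcurrents of $T$ with $T=T_1+T_2$, and $\partial T_a=\mu_a^+-\mu_a^-$ by (\ref{eqn: startingending measure}), so $T_a\in Path(\mu_a^-,\mu_a^+)$ with $\mu_1^\pm+\mu_2^\pm=\mu^\pm$; this is part~(a). For part~(b): because $B_1$ has at most one nonzero per row, each $x_i$ has all of its $\eta_1$-mass on curves ending at a single $y_j$, which I call $\varphi(x_i)$; one checks $\varphi_\#\mu_1^-=\mu_1^+$, and since $p_\infty(\gamma)=\varphi(p_0(\gamma))$ for $\eta_1$-a.e.\ $\gamma$, the measure $\eta_1$ itself witnesses that $(T_1,\varphi)$ is compatible in the sense of Definition~\ref{def: Radon_compatibility}. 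Symmetrically, $B_2$ has at most one nonzero per column, so each $y_j$ with $\mu_2^+(\{y_j\})>0$ receives its mass from a single source $\psi(y_j)$; then $\psi\in Map(\mu_2^+,\mu_2^-)$, and pushing $\eta_2$ forward under curve reversal $R$ (so that $I_{R\gamma}=-I_\gamma$ and $\int_\Gamma I_\gamma\,d(R_\#\eta_2)=-T_2$), the same verification with the roles of $p_0$ and $p_\infty$ interchanged shows $(-T_2,\psi)$ is compatible.

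The combinatorial fact of the first two paragraphs is the only new ingredient, and it is elementary once the right splitting ($B_1=$ leftmost nonzero in each row, $B_2=$ the remainder) has been isolated; everything else is bookkeeping through the established dictionary. The step requiring the most care is the treatment of $-T_2$: to match Definition~\ref{def: Radon_compatibility} one must work with the reversed measure $R_\#\eta_2$ rather than $\eta_2$. One should also note that degenerate cases cause no difficulty: no row of $B$ is identically zero (each row sum equals $m_i'>0$), so $\varphi$ is unambiguously defined on every $x_i$, while a column that happens to be zero in $B_2$ simply means the corresponding $y_j$ carries no $\mu_2^+$-mass, so the value $\psi(y_j)$ may be chosen arbitrarily.
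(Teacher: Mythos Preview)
Your proof is correct and follows essentially the same strategy as the paper's: split the stair-shaped representing matrix into a piece with at most one nonzero per row and a remainder with at most one nonzero per column, then translate this back through the good decomposition to obtain the two map-compatible pieces. The only difference is cosmetic: the paper takes the \emph{rightmost} nonzero entry of each row into the first piece (and observes that any other nonzero entry is then automatically the last nonzero in its column), whereas you take the \emph{leftmost} nonzero entry and prove the column property for the remainder directly from the monotonicity fact $i<i'\Rightarrow j\le j'$. Both choices work for the same reason, and your explicit use of curve reversal to verify the compatibility of $(-T_2,\psi)$ is a nice touch that the paper leaves implicit.
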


\begin{proof}
Since $T$ is stair-shaped, there exists a good decomposition $\eta$ whose representing matrix $A=[a_{ij}]$ is a stair-shaped matrix.
 We now write $A$ as the sum of $B=[b_{ij}]$ and $C=[c_{ij}]$ as follows. For each $i$ and $j$, if $a_{ij}=0$, set $b_{ij}=0$ and $c_{ij}=0$. When $a_{ij}>0$, 
\begin{itemize}
    \item if $a_{ij}$ is the last non-zero entry in the $i$-th row of $A$, (i.e., $a_{ij'}=0$ for all $j' \ge j+1$,) 
    we set $b_{ij}=a_{ij}$ and $c_{ij}=0$; 
    \item if $a_{ij}$ is not the last non-zero entry in the $i$-th row of $A$, since $A$ is stair-shaped, $a_{ij}$ is the last non-zero entry in the $j$-th column of $A$. In this case, we set $b_{ij}=0$ and $c_{ij}=a_{ij}$.
\end{itemize}
By doing so, we write $A=B+C$ such that each row of $B=[b_{ij}]$ and each column of $C=[c_{ij}]$ contain at most one non-zero entry. 
Note that for each $(i,j)$, $a_{ij}=b_{ij}+c_{ij}$ and $a_{ij}>0$ means either $b_{ij}>0$ or $c_{ij}>0$ but not both. Define 
$$\mu_1^- = \sum_{i} \left( \sum_j b_{ij} \right) \delta_{x_i}, \ 
\mu_1^+ = \sum_{j} \left( \sum_i b_{ij} \right) \delta_{y_j}, \ 
\mu_2^- = \sum_{i} \left( \sum_j c_{ij} \right) \delta_{x_i}, \ 
\mu_2^+ = \sum_{j} \left( \sum_i c_{ij} \right) \delta_{y_j}. \ 
$$
Then 
$\mu^- = \mu_1^- + \mu_2^-$ and $ \mu^+ = \mu_1^+ + \mu_2^+$.
Let
$$T_1:=\int_{\{\gamma \in \Gamma_{x_i,y_j}:\; b_{ij}>0\}} I_\gamma \,d\eta,\text{ and }
T_2:=\int_{\{ \gamma \in \Gamma_{x_i,y_j}:\; c_{ij}>0\}} I_\gamma \,d\eta.$$
Both $T_1$ and $T_2$ are subcurrents of $T$, and 
$$\partial T_1 = \int_{\{ \gamma \in \Gamma_{x_i,y_j}:\; b_{ij}>0\}} ( \delta_{y_j}-\delta_{x_i} ) \,d\eta = 
\sum_{i,j} b_{ij} ( \delta_{y_j}-\delta_{x_i} ) = 
\mu_1^+ - \mu_1^- ,
$$
$$\partial T_2 = \int_{\{ \gamma \in \Gamma_{x_i,y_j}:\; c_{ij}>0\}} ( \delta_{y_j}-\delta_{x_i} ) \,d\eta = 
\sum_{i,j} c_{ij} ( \delta_{y_j}-\delta_{x_i} ) = 
\mu_2^+ - \mu_2^- ,
$$
which gives $T_i \in Path(\mu_i^-,\mu_i^+)$ for $i=1,2$.
Then,
$$
T= \int_\Gamma I_\gamma \,d\eta =\int_{\{ \gamma \in \Gamma_{x_i,y_j}:\; a_{ij}>0\}} I_\gamma \,d\eta
=
\int_{\{ \gamma \in \Gamma_{x_i,y_j}:\; b_{ij}>0\}} I_\gamma \,d\eta + 
\int_{\{ \gamma \in \Gamma_{x_i,y_j}:\; c_{ij}>0\}} I_\gamma \,d\eta=T_1 + T_2.    
$$
Denote 
$$X_1 =\{x_i \in X : \mu_1^-(\{x_i\}) > 0\},\ 
Y_1 =\{y_j \in X : \mu_1^+(\{y_j\}) > 0\},$$
$$X_2 =\{x_i \in X : \mu_2^-(\{x_i\}) > 0\},\ 
Y_2 =\{y_j \in Y : \mu_2^+(\{y_j\}) > 0\}.$$
Observe that since $A$ is stair-shaped, by the construction of $b_{ij}$, for each $i$, there exists at most one $j$ (i.e. the largest $j$ with $a_{ij}>0$) such that $b_{ij}>0$. This leads to a map: 
$\varphi: X_1 \to Y_1$ given by
$$\varphi(x_i)=y_j \text{ if } b_{ij}>0.$$
Similarly, for each $j$, there exists at most one $i$ (i.e. the largest $i$ with $a_{ij}>0$) such that $c_{ij}>0$. This leads to a map: 
$\psi: Y_2 \to X_2$ given by
$$\psi (y_j)=x_i \text{ if } c_{ij}>0.$$
By definition of $\varphi$, for each $y_j \in Y_1$,
$$\varphi_{\#}\mu_1^-(\{y_j\}) 
= \mu_1^-(\varphi^{-1}(y_j)) 
= \mu_1^-(\{x_i : b_{ij} >0\})
=\sum_{b_{ij >0}}\mu_1^-(\{x_i\}) 
= \sum_{i} b_{ij}   
= \mu_1^+ (\{y_j\}).$$
Therefore,
$\varphi_{\#}\mu_1^- = \mu_1^+$, and similarly,
$
\mu_2^- = \psi_{\#}\mu_1^+$. 
Also, direct calculation gives
$$\pi_\varphi: = (id \times \varphi)_{\#} \mu_1^- 
= 
\int_{\{ \gamma \in \Gamma_{x_i,y_j}:\; b_{ij}>0\}} \delta_{(x_i,y_j)} \,d\eta, 
\text{ and } 
\pi_\psi: = (id \times \psi)_{\#} \mu_2^+ 
= 
\int_{\{ \gamma \in \Gamma_{x_i,y_j}:\; c_{ij}>0\}} \delta_{(y_j,x_i)} \,d\eta.
$$
Hence, $( T_1,\varphi)$ and $(-T_2,\psi)$ are compatible.
\end{proof}

We now provide an example to illustrate Theorem \ref{thm: stair shaped induced transport maps}.

\begin{example}
\label{ex: decomposing stair-shaped matrix, standard.}
Let $T$, $\mu^-$, $\mu^+$, $A$, $B$, $\eta_A$, $\eta_B$ be the same values as defined in Example \ref{ex: stair shaped decomposition and measures}. 
By Theorem \ref{thm: stair shaped induced transport maps},
we have 
$$
B_1 = 
\begin{bmatrix}
9& 0 & 0 & 0 & 0 \\
9& 0 & 0 & 0 & 0 \\
9& 0 & 0 & 0 & 0 \\
0& 0 & 9 & 0 & 0 \\
0& 0 & 0 & 0 & 9
\end{bmatrix}, \quad
B_2 = 
\begin{bmatrix}
0& 0 & 0 & 0 & 0 \\
0& 0 & 0 & 0 & 0 \\
0& 0 & 0 & 0 & 0 \\
9& 9 & 0 & 0 & 0 \\
0& 0 & 9 & 9 & 0
\end{bmatrix},
$$
so that $B=B_1 + B_2$.
By matrix $B_1$, we get a transport path $T_1$, with  
$$\mu_1^- = 9 \delta_{x_1} + 9 \delta_{x_2} + 9 \delta_{x_3} + 9 \delta_{x_4} + 9 \delta_{x_5}, \ 
\mu_1^+ = 27 \delta_{y_1} + 9 \delta_{y_3} + 9 \delta_{y_5},
$$
and 
$\varphi: \{x_1,x_2,x_3,x_4,x_5\} \to \{y_1,y_3,y_5\},$ such that  
$$\varphi(x_1)=\varphi(x_2)=\varphi(x_3)=y_1,\ \varphi(x_4)=y_3,\ \varphi(x_5)=y_5.$$

\begin{center}

\begin{tikzpicture} [>=latex]
\filldraw[black] (0,4) circle (1pt) node[anchor=east]{$x_1$};
\filldraw[black] (0,3) circle (1pt) node[anchor=east]{$x_2$};
\filldraw[black] (0,2) circle (1pt) node[anchor=east]{$x_3$};
\filldraw[black] (0,1) circle (1.73pt) node[anchor=east]{$x_4$};
\filldraw[black] (0,0) circle (1.73pt) node[anchor=east]{$x_5$};

\filldraw[black] (6,4) circle (2pt) node[anchor=west]{$y_1$};

\filldraw[black] (6,2) circle (1.4pt) node[anchor=west]{$y_3$};

\filldraw[black] (6,0) circle (1pt) node[anchor=west]{$y_5$};

\filldraw[black] (3,0) circle (0pt) node[anchor=north]{$T_1$};

\filldraw[black] (0.55,0.6) circle (0pt) node[anchor=north]{$9$};
\filldraw[black] (0.27,1.1) circle (0pt) node[anchor=north]{$9$};
\filldraw[black] (0.3,2) circle (0pt) node[anchor=north]{$9$};
\filldraw[black] (0.45,3) circle (0pt) node[anchor=north]{$9$};
\filldraw[black] (0.3,3.65) circle (0pt) node[anchor=north]{$9$};

\draw[blue,->] (0,4)--(1,3.05);
\draw[blue,->] (1,3.05)--(2,1.95);
\draw[blue,->] (2,1.95)--(4.5,2.05);
\draw[blue,->] (4.5,2.05)--(5,2.7);
\draw[blue,->] (5,2.7)--(5.5,3.7);
\draw[blue,->] (5.5,3.7)--(6,4);

\draw[cyan,->] (0,3)--(1,2.95);
\draw[cyan,->] (1,2.95)--(2,1.85);
\draw[cyan,->] (2,1.85)--(4.5,1.95);
\draw[cyan,->] (4.5,1.95)--(5,2.6);
\draw[cyan,->] (5,2.6)--(5.5,3.6);
\draw[cyan,->] (5.5,3.6)--(6,4);

\draw[->] (0,2)--(1,1.75);
\draw[->] (1,1.75)--(2,1.75);
\draw[->] (2,1.75)--(4.5,1.85);
\draw[->] (4.5,1.85)--(5,2.5);
\draw[->] (5,2.5)--(5.5,3.5);
\draw[->] (5.5,3.5)--(6,4);

\draw[orange,->] (0,1)--(0.75,1.05);
\draw[orange, ->] (0.75,1.05)--(1,1.65);
\draw[orange,->] (1,1.65)--(2,1.65);
\draw[orange,->] (2,1.65)--(4.5,1.75);
\draw[orange,->] (4.5,1.75)--(5,2.4);
\draw[orange, ->] (5,2.4)--(6,2);

\draw[violet,->] (0,0)--(0.75,0.95);
\draw[violet,->] (0.75,0.95)--(1,1.55);
\draw[violet,->] (1,1.55)--(2,1.55);
\draw[violet,->] (2,1.55)--(4.5,1.65);
\draw[violet,->] (4.5,1.65)--(5.5,0.5);
\draw[violet,->] (5.5,0.5)--(6,0);

\end{tikzpicture}
    
\end{center}

By matrix $B_2$, we get a transport path $T_2$, with
$$\mu_2^- =  18 \delta_{x_4} + 18 \delta_{x_5}, \ 
\mu_2^+ = 9 \delta_{y_1} + 9 \delta_{y_2} + 9 \delta_{y_3} + 9 \delta_{y_4},
$$
and 
$\psi:\{y_1,y_2,y_3,y_4\} \to \{x_4,x_5\} $, such that 
$$\psi(y_1)=\psi(y_2)=x_4,\ \psi(y_3)=\psi(y_4)=x_5 $$

\begin{center}

\begin{tikzpicture} [>=latex]

\filldraw[black] (0,1) circle (1.73pt) node[anchor=east]{$x_4$};
\filldraw[black] (0,0) circle (1.73pt) node[anchor=east]{$x_5$};

\filldraw[black] (6,4) circle (2pt) node[anchor=west]{$y_1$};
\filldraw[black] (6,3) circle (1pt) node[anchor=west]{$y_2$};
\filldraw[black] (6,2) circle (1.4pt) node[anchor=west]{$y_3$};
\filldraw[black] (6,1) circle (1pt) node[anchor=west]{$y_4$};

\filldraw[black] (3,0) circle (0pt) node[anchor=north]{$T_2$};

\filldraw[black] (0.55,0.6) circle (0pt) node[anchor=north]{$ $};
\filldraw[black] (0.27,1.1) circle (0pt) node[anchor=north]{$ $};

\filldraw[black] (5.9,3.85) circle (0pt) node[anchor=north]{$9$};
\filldraw[black] (5.65,3.3) circle (0pt) node[anchor=north]{$9$};
\filldraw[black] (5.4,2.3) circle (0pt) node[anchor=north]{$9$};
\filldraw[black] (5.9,0.85) circle (0pt) node[anchor=north]{$9$};

\filldraw[black] (2,1.75) circle (0pt) ;
\filldraw[black] (4.5,1.85) circle (0pt) ;

\filldraw[black] (5,2.5) circle (0pt) ;
\filldraw[black] (5.5,0.5) circle (0pt) ;

\filldraw[black] (5.5,3.5) circle (0pt) ;

\draw[teal,->] (0,1)--(0.75,1.25);
\draw[teal, ->] (0.75,1.25)--(1,1.85);
\draw[teal,->] (1,1.85)--(2,1.85);
\draw[teal,->] (2,1.85)--(4.5,1.95);
\draw[teal,->] (4.5,1.95)--(5,2.6);
\draw[teal,->] (5,2.6)--(5.5,3.6);
\draw[teal,->] (5.5,3.6)--(6,4);

\draw[orange,->] (0,1)--(0.75,1.15);
\draw[orange, ->] (0.75,1.15)--(1,1.75);
\draw[orange,->] (1,1.75)--(2,1.75);
\draw[orange,->] (2,1.75)--(4.5,1.85);
\draw[orange,->] (4.5,1.85)--(5,2.5);
\draw[orange,->] (5,2.5)--(5.5,3.5);
\draw[orange,->] (5.5,3.5)--(6,3);

\draw[violet,->] (0,0)--(0.75,1.05);
\draw[violet,->] (0.75,1.05)--(1,1.65);
\draw[violet,->] (1,1.65)--(2,1.65);
\draw[violet,->] (2,1.65)--(4.5,1.75);
\draw[violet,->] (4.5,1.75)--(5,2.4);
\draw[violet, ->] (5,2.4)--(6,2);

\draw[purple,->] (0,0)--(0.75,0.95);
\draw[purple,->] (0.75,0.95)--(1,1.55);
\draw[purple,->] (1,1.55)--(2,1.55);
\draw[purple,->] (2,1.55)--(4.5,1.65);
\draw[purple,->] (4.5,1.65)--(5.5,0.5);
\draw[purple,->] (5.5,0.5)--(6,1);

\end{tikzpicture}
    
\end{center}

Then, $T$ is decomposed as the sum of $T_1$ and $T_2$.
\end{example}

\subsection{Cycle-free stair-shaped transport paths}

\ 

To use Theorem \ref{thm: stair shaped induced transport maps}, for a given transport path, one may want to find a stair-shaped good decomposition of it. However, the stair-shaped matrix generated by Algorithm \ref{rem: Stair shaped Algorithm} does not necessarily correspond to a good decomposition, even if we start with a good decomposition, as demonstrated by the following example.

\begin{example}

Let $T$ be the graph given in the following figure, and $\gamma_{i,j}$ be the curve on $T$ from $x_i$ to $y_j$ for each $i,j$.
\begin{center}

\begin{tikzpicture} [>=latex]

\filldraw[black] (3,0) circle (1pt) node[anchor=north]{$y_1$};
\filldraw[black] (7,0) circle (1pt) node[anchor=north] {$y_2$};
\filldraw[black] (3,2) circle (1pt) node[anchor=south]{$x_2$};
\filldraw[black] (7,2) circle (1pt) node[anchor=south]{$x_1$};

\filldraw[black] (6.75,1.85) circle (0pt) node[anchor=north]{$2$};
\filldraw[black] (6.65,0.6) circle (0pt) node[anchor=north]{$1$};
\filldraw[black] (5,1.7) circle (0pt) node[anchor=north]{$1$};
\filldraw[black] (3.5,1.55) circle (0pt) node[anchor=north]{$1$};
\filldraw[black] (3.5,0.5) circle (0pt) node[anchor=north]{$2$};

\draw[->] (3,2) -- (4,1);
\draw[thick,->] (7,2) --  (6.3,1.5);
\draw[->] (6.3,1.5) --  (4,1);
\draw[thick,->] (4,1) -- (3,0);
\draw[->] (6.3,1.5) --  (7,0);
\end{tikzpicture}

\begin{tikzpicture}
\filldraw[black] (5,0) circle (0pt) node[anchor=south]{$T$};
\end{tikzpicture}

\end{center}
Then,
$$\eta=\delta_{\gamma_{1,1}} + \delta_{\gamma_{1,2}} + \delta_{\gamma_{2,1}}$$
is a good decomposition of $T$ with the representing matrix
$$
A=[a_{ij}] = 
\begin{bmatrix}
1& 1 \\
1& 0
\end{bmatrix}.
$$
Algorithm \ref{rem: Stair shaped Algorithm} gives the stair-shaped matrix 
$$ B=[b_{ij}] = 
\begin{bmatrix}
2& 0 \\
0& 1
\end{bmatrix}.
$$
However, the corresponding measure, 
$$\eta_B:= 2\delta_{\gamma_{1, 1}} +  \delta_{\gamma_{2, 2} }$$ 
is not a good decomposition of $T$ anymore.

\end{example}

To overcome this issue, we introduce the following concepts:
\begin{definition}
Given $A\in \mathcal{A}_{M,N}$, an elementary matrix $E [ (i_1,j_1),(i_2,j_2 ) ]$ is called admissible to $A$ if $a_{ij}>0$ for all $(i,j)\in \{(i_1,j_1),(i_2,j_2 ), (i_1,j_2),(i_2,j_1 )\}$.
For any two matrices $A, B\in \mathcal{A}_{M,N}$, we say $A\triangleq B$  if there exists a list of real numbers $\{t_k\}_{k=1}^K$ and a list of elementary matrices $\{E_k\}_{k=1}^K$ admissible to $A$ such that
$B = A + \sum_{k=1}^K t_k E_k$ for some $K\in \mathbb{N} \cup \{ \infty \}$.
\end{definition}

\begin{lemma}
\label{lem: eta_B}
Suppose $A$ is the representing matrix of a finite measure $\eta_A$ on $\Gamma$ satisfying $(p_0)_\#\eta_A=\mu^-$ and $(p_\infty)_\#\eta_A=\mu^+$.  For  any matrix $B=[b_{ij}]$ with $A\triangleq B$, define 
\begin{equation}
\label{eqn: eta_B}
\eta_B:=
\sum_{ \substack{i,j \\ \text{ with } a_{ij}>0} }  
\frac{b_{ij}}{a_{ij}}\eta_A\lfloor_{\Gamma_{x_i,y_j}}.
\end{equation}
Then $\eta_B$ is a finite measure
on $\Gamma$ with $(p_0)_\#\eta_B=\mu^-$ and $(p_\infty)_\#\eta_B=\mu^+$. Moreover, $B$ is the representing matrix of $\eta_B$ and $\eta_B \precc \eta_A$.
\end{lemma}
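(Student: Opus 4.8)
The plan is to verify, in order, that $\eta_B$ is a well-defined finite positive measure, that its marginals are $\mu^-$ and $\mu^+$, that $B$ is its representing matrix, and that $\eta_B \precc \eta_A$; the whole argument rests on one structural observation, namely that admissibility forces $b_{ij} = 0$ whenever $a_{ij} = 0$. Writing $B = A + \sum_{k=1}^K t_k E_k$ with each $E_k = E[(i_1^k,j_1^k),(i_2^k,j_2^k)]$ admissible to $A$, I would note that every position where some $E_k$ is nonzero is one of its four corners, and admissibility makes $A$ strictly positive there; hence $a_{ij}=0$ implies $(E_k)_{ij}=0$ for all $k$, so $b_{ij}=a_{ij}=0$. (When $K=\infty$ the sum is read entrywise, which is unambiguous since for a fixed $(i,j)$ only the $E_k$ having $(i,j)$ among their corners contribute.)

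With this in hand the first and third assertions are essentially immediate. Each summand $\frac{b_{ij}}{a_{ij}}\,\eta_A\lfloor_{\Gamma_{x_i,y_j}}$ in (\ref{eqn: eta_B}) is a nonnegative measure, since $B\in\mathcal{A}_{M,N}$ gives $b_{ij}\ge 0$ and $a_{ij}>0$ in each term; the sets $\Gamma_{x_i,y_j}$ are pairwise disjoint, and $\eta_A$ is concentrated on their union because $(p_0)_\#\eta_A=\mu^-$ and $(p_\infty)_\#\eta_A=\mu^+$ force $\eta_A$-a.e.\ curve to run from some $x_i$ to some $y_j$. Hence $\eta_B$ is a well-defined positive measure, and $\eta_B\lfloor_{\Gamma_{x_i,y_j}}$ equals $\frac{b_{ij}}{a_{ij}}\eta_A\lfloor_{\Gamma_{x_i,y_j}}$ when $a_{ij}>0$ and $0$ when $a_{ij}=0$; in either case $\eta_B(\Gamma_{x_i,y_j})=b_{ij}$, i.e.\ $B$ is the representing matrix of $\eta_B$. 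For the marginals, since $p_0\equiv x_i$ on $\Gamma_{x_i,y_j}$ I get $(p_0)_\#\eta_B=\sum_i\big(\sum_j b_{ij}\big)\delta_{x_i}$; the point is that each elementary matrix has every row summing to zero, so the row sums of $B$ coincide with those of $A$, giving $\sum_j b_{ij}=\sum_j a_{ij}=m'_i$ and thus $(p_0)_\#\eta_B=\mu^-$; symmetrically, using that $E_k$ has zero column sums, $(p_\infty)_\#\eta_B=\mu^+$. Finiteness is then automatic: $\eta_B(\Gamma)=\mu^-(X)=\sum_i m'_i<\infty$.

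For $\eta_B\precc\eta_A$ I would simply invoke Lemma~\ref{lem: equivalent_definition_precc}: if $\eta_B(\Gamma_{x_i,y_j})=b_{ij}>0$, then $a_{ij}\neq 0$ by the contrapositive of the structural fact, so $\eta_A(\Gamma_{x_i,y_j})=a_{ij}>0$, and since $\eta_B\lfloor_{\Gamma_{x_i,y_j}}=\frac{b_{ij}}{a_{ij}}\,\eta_A\lfloor_{\Gamma_{x_i,y_j}}$ is a positive rescaling, the normalization defining $S_{i,j}$ cancels the scalar, so $S_{i,j}(\eta_B)=S_{i,j}(\eta_A)$. The one genuinely delicate point I anticipate is the $K=\infty$ case: one must make sense of $\sum_k t_k E_k$ entrywise and justify the interchange of summation used to read off the preserved row and column sums. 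I expect to handle this by passing to the partial sums $A+\sum_{k\le n}t_k E_k$, each of which is a genuine finite modification of $A$ touching only finitely many columns within any given row (and finitely many rows within any given column), hence has exactly the row and column sums of $A$; taking limits then transfers this to $B$.
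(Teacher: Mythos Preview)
Your proposal is correct and follows essentially the same route as the paper: both arguments hinge on the observation that admissibility of the $E_k$ forces $b_{ij}=0$ whenever $a_{ij}=0$, both use that elementary matrices have vanishing row and column sums to preserve the marginals, and both conclude $\eta_B\precc\eta_A$ via Lemma~\ref{lem: equivalent_definition_precc}. If anything you are slightly more careful than the paper in flagging the $K=\infty$ interchange-of-sums issue, which the paper leaves implicit.
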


\begin{proof} The condition 
$A \triangleq B$ gives 
$$B = A + \sum_{k} t_k E_k,$$ 
for some real numbers $t_k$ and elementary matrices $E_k = E[(i_k,j_k),(i'_k,j'_k)]$ that are \emph{admissible} to $A$. 

Note that
\begin{eqnarray*}
\eta_B(\Gamma)
&=&
\sum_{ \substack{i,j \\ \text{ with } a_{ij}>0} }  
\frac{b_{ij}}{a_{ij}}\eta_A\lfloor_{\Gamma_{x_i,y_j}}(\Gamma)
=
\sum_{ \substack{i,j \\ \text{ with } a_{ij}>0} }  
\frac{b_{ij}}{a_{ij}}\eta_A(\Gamma_{x_i,y_j}) 
=
\sum_{ \substack{i,j \\ \text{ with } a_{ij}>0} }  
b_{ij} \\
&=&
\sum_{ \substack{i,j \\ \text{ with } a_{ij}>0} }  
\left(a_{ij}+t_k (E_k)_{ij} \right)
=
\sum_{ \substack{i,j \\ \text{ with } a_{ij}>0} }  
a_{ij} 
=
\eta_A(\Gamma)<\infty.
\end{eqnarray*}
Moreover,
\begin{eqnarray*}
(p_0)_{\#}\eta_B
&=&
\sum_{ \substack{i,j \\ \text{ with } a_{ij}>0} }  
\frac{b_{ij}}{a_{ij}}\eta_A(\Gamma_{x_i,y_j})\delta_{x_i}
=
\sum_{ \substack{i,j \\ \text{ with } a_{ij}>0} }  
b_{ij}\delta_{x_i} 
=
\sum_{i}\left(\sum_{ \substack{j \\ \text{ with } a_{ij}>0} }  
\left(a_{ij}+ \sum_{k} t_k (E_k)_{ij} \right)\right)\delta_{x_i} \\
&=&
\sum_{i}\left(\sum_{ \substack{j \\ \text{ with } a_{ij}>0} }  
a_{ij}\right)\delta_{x_i}
=
\sum_{ \substack{i,j \\ \text{ with } a_{ij}>0} }  
a_{ij}\delta_{x_i}=(p_0)_{\#}\eta_A=\mu^-.
\end{eqnarray*}
Similarly, $(p_\infty)_{\#}\eta_B=\mu^+$.

We now show that $B$ is the representing matrix of $\eta_B$, i.e., $\eta_B(\Gamma_{x_{i'},y_{j'}})=b_{i'j'}$ for each pair $(i', j')$.
If $a_{i'j'}=0$, then $\eta_B(\Gamma_{x_{i'},y_{j'}}) =0$ since the sum is over all $a_{ij}>0$. Also, since $E_k$'s are admissible to $A$, this gives $(E_k)_{i' j'} =0$ for all $k$, so that $b_{i'j'}=0=\eta_B(\Gamma_{x_{i'},y_{j'}})$.
If  $a_{i'j'}>0$, then since $\eta_A(\Gamma_{x_{i'},y_{j'}})=a_{i'j'}$, 
$$\eta_B(\Gamma_{x_{i'},y_{j'}}) = 
\sum_{ \substack{i,j \\ \text{ with } a_{ij}>0} }  \frac{b_{ij}}{a_{ij}}\eta_A\lfloor_{\Gamma_{x_i,y_j}}(\Gamma_{x_{i'},y_{j'}}) = b_{i'j'}.$$
Therefore, $B$ is the representing matrix of $\eta_B$.

In the end, we show $\eta_B \precc \eta_A$ by using Lemma \ref{lem: equivalent_definition_precc}. 
Suppose $\eta_B(\Gamma_{x_{i'},y_{j'}}) =  b_{i'j'}>0$, then previous argument gives $a_{i'j'}>0$.
Also, by definition of $\eta_B$,
$$\int_{\Gamma_{x_{i'},y_{j'}} }  I_\gamma d\eta_B = 
\frac{b_{i'j'}}{a_{i'j'}}
\int_{\Gamma_{x_{i'},y_{j'}} }  I_\gamma d\eta_A, \text{ and hence }
\frac{1}{b_{i'j'}}\int_{\Gamma_{x_{i'},y_{j'}} }  I_\gamma d\eta_B = 
\frac{1}{a_{i'j'}}
\int_{\Gamma_{x_{i'},y_{j'}} }  I_\gamma d\eta_A.$$
As a result, $S_{i'j'}(\eta_B) = S_{i'j'}(\eta_A)$ as desired.
\end{proof}

\begin{proposition} 
\label{prop: matrix_good_decomp}
Let  $T$ be a cycle-free transport path from $\mu^-$ to $\mu^+$.
Suppose $\eta_A$ is a good decomposition of $T$, then for any matrix $B=[b_{ij}]$ with $A\triangleq B$, $\eta_B$ given in (\ref{eqn: eta_B}) is also a good decomposition of $T$.
\end{proposition}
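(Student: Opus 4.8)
The plan is to verify, one at a time, the four defining conditions of a good decomposition (Definition \ref{defn: good decomposition}) for $\eta_B$, drawing everything from Lemma \ref{lem: eta_B} and the standard properties of good decompositions recalled after Definition \ref{defn: good decomposition}, with the sole exception of the identity $T=\int_\Gamma I_\gamma\,d\eta_B$, which is where cycle-freeness of $T$ enters. Write $T=\underline{\underline{\tau}}(M,\theta,\xi)$ and $a_{ij}=\eta_A(\Gamma_{x_i,y_j})$, and recall from Lemma \ref{lem: eta_B} that $\eta_B$ is a finite measure with the correct marginals, has representing matrix $B$, satisfies $\eta_B\precc\eta_A$, and that $\eta_B(\Gamma)=\eta_A(\Gamma)$ (the last identity is obtained in the proof of that lemma, using $\sum_{i,j}(E_k)_{ij}=0$ for each elementary matrix $E_k$). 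Two of the four conditions follow immediately: since $\eta_B\lfloor_{\Gamma_{x_i,y_j}}=\tfrac{b_{ij}}{a_{ij}}\,\eta_A\lfloor_{\Gamma_{x_i,y_j}}$ whenever $a_{ij}>0$ and $\eta_B$ charges nothing outside $\bigcup_{i,j}\Gamma_{x_i,y_j}$, we get $\eta_B\ll\eta_A$ as measures, so $\eta_B$, like $\eta_A$, is supported on non-constant simple curves; and $\mathbf{M}(\partial T)=2\eta_A(\Gamma)=2\eta_B(\Gamma)$ is the boundary-mass condition.

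\textbf{The key identity $T=\int_\Gamma I_\gamma\,d\eta_B$.} Write $A\triangleq B$ as $B=A+\sum_k t_k E_k$ with each $E_k=E[(i_k,j_k),(i'_k,j'_k)]$ admissible to $A$. Using that $(i,j)\mapsto S_{i,j}(\eta_A)$ is linear, that $\int_\Gamma I_\gamma\,d\eta_A=\sum_{i,j}a_{ij}S_{i,j}(\eta_A)=T$, and that $\int_\Gamma I_\gamma\,d\eta_B=\sum_{i,j}b_{ij}S_{i,j}(\eta_A)$, one finds
\[
\int_\Gamma I_\gamma\,d\eta_B-T=\sum_{i,j}(b_{ij}-a_{ij})\,S_{i,j}(\eta_A)=\sum_k t_k\,C[(i_k,j_k),(i'_k,j'_k),\eta_A],
\]
every $S_{i,j}(\eta_A)$ occurring here being well defined because the $E_k$ are admissible. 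Hence it suffices to prove $C[(i_k,j_k),(i'_k,j'_k),\eta_A]=0$ for each $k$. Admissibility together with (\ref{eqn: equalsgn}) gives $\partial C[(i_k,j_k),(i'_k,j'_k),\eta_A]=0$; and, repeating the density computation from the proof of (\ref{eqn: X_j_intersection}) (the proposition preceding Theorem \ref{thm: decomposition}), for any $\epsilon_0>0$ at most a quarter of the minimum of the four positive numbers $a_{i_kj_k},a_{i_kj'_k},a_{i'_kj_k},a_{i'_kj'_k}$, the density identity (\ref{eqn: theta(x)}) shows $\epsilon_0\,C[(i_k,j_k),(i'_k,j'_k),\eta_A]$ is \emph{on} $T$ in the sense of Definition \ref{def: S_on_T}. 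Thus $\epsilon_0\,C[(i_k,j_k),(i'_k,j'_k),\eta_A]$ is a cycle on $T$, so it vanishes because $T$ is cycle-free, whence $C[(i_k,j_k),(i'_k,j'_k),\eta_A]=0$ and $\int_\Gamma I_\gamma\,d\eta_B=T$.

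\textbf{The mass condition.} For each $(i,j)$ with $a_{ij}>0$, the restriction $\eta_A\lfloor_{\Gamma_{x_i,y_j}}$ is a good decomposition of $\int_{\Gamma_{x_i,y_j}}I_\gamma\,d\eta_A$, so by the facts recalled in (\ref{eqn: theta(x)})--(\ref{eqn: tilde_theta_x}) this current has the form $\underline{\underline{\tau}}(M,\,\cdot\,,\xi)$ \emph{with the same orientation field $\xi$ as $T$}. Scaling by $b_{ij}/a_{ij}\ge0$ and summing over $(i,j)$ expresses $\int_\Gamma I_\gamma\,d\eta_B$ as $\underline{\underline{\tau}}(M,\theta_B,\xi)$ with $\theta_B(x)=\eta_B(\Gamma_x)$; since every summand carries the orientation $\xi$ there is no cancellation, so by Fubini (and since $\eta_B$-a.e. curve is carried by $M$) one gets $\mathbf{M}\!\big(\int_\Gamma I_\gamma\,d\eta_B\big)=\int_M\eta_B(\Gamma_x)\,d\mathcal{H}^1(x)=\int_\Gamma\mathbf{M}(I_\gamma)\,d\eta_B$. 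Combining with the identity $\int_\Gamma I_\gamma\,d\eta_B=T$ from the previous step yields $\mathbf{M}(T)=\int_\Gamma\mathbf{M}(I_\gamma)\,d\eta_B$, which completes the verification that $\eta_B$ is a good decomposition of $T$.

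\textbf{Main obstacle and loose ends.} The crux is the vanishing of each quadrilateral current $C[(i_k,j_k),(i'_k,j'_k),\eta_A]$: coaxing the density bound (\ref{eqn: theta(x)}) into certifying that a small multiple is \emph{on} $T$, and then invoking cycle-freeness; this is the only place the hypothesis on $T$ is used, and it is the exact analogue of the perturbation argument behind (\ref{eqn: X_j_intersection}). A secondary technical point, only when $K=\infty$, is the interchange of the infinite sum over $k$ with the linear map $C\mapsto\sum_{i,j}C_{ij}S_{i,j}(\eta_A)$: since the partial sums $A+\sum_{k=1}^{K}t_k E_k$ are entrywise eventually constant and converge to $B$ (as in Lemma \ref{lem: eta_B}), and since each $C[\cdots,\eta_A]$ has been shown to vanish, the partial sums of currents all equal $T$, so their limit is $T$; this should be written out explicitly, but it is routine.
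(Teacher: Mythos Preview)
Your proof is correct and follows the same core strategy as the paper: express $\int_\Gamma I_\gamma\,d(\eta_B-\eta_A)$ as $\sum_k t_k\,C[(i_k,j_k),(i'_k,j'_k),\eta_A]$, use admissibility of each $E_k$ to see that the corresponding quadrilateral has zero boundary, certify that (a small multiple of) it is on $T$, and invoke cycle-freeness. The one place you diverge from the paper is the mass identity $\mathbf{M}(T)=\int_\Gamma\mathbf{M}(I_\gamma)\,d\eta_B$: the paper simply points back to ``an analogous argument as in the proof of Step~1 in Lemma~\ref{lem: S_ij(1,1)}'' (where subcurrent additivity yields $\mathbf{M}(S_{i_1,j_1})+\mathbf{M}(S_{i_2,j_2})=\mathbf{M}(S_{i_1,j_2})+\mathbf{M}(S_{i_2,j_1})$ once the quadrilateral vanishes), whereas you give a self-contained density computation showing $\int_\Gamma I_\gamma\,d\eta_B=\underline{\underline{\tau}}(M,\eta_B(\Gamma_{\,\cdot\,}),\xi)$ with the \emph{same} orientation $\xi$ as $T$, then apply Fubini. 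Your route is a bit more explicit, and in fact more careful on the ``on $T$'' step: the paper asserts without comment that $C[\cdots]$ itself is on $T$, while you correctly scale by $\epsilon_0$ exactly as in the proof of~(\ref{eqn: X_j_intersection}).

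One small inaccuracy in your ``loose ends'' paragraph: the assertion that the partial sums $A+\sum_{k\le K}t_kE_k$ are entrywise eventually constant is not part of the definition of $A\triangleq B$, nor is it established in Lemma~\ref{lem: eta_B}; you may be thinking of the specific construction in Theorem~\ref{thm: stairshaped-matrix}. This does not damage the argument, since once each $C[(i_k,j_k),(i'_k,j'_k),\eta_A]=0$ is known, the sum $\sum_k t_k\,C[\cdots]$ vanishes termwise---and indeed the paper's own proof performs the same interchange of sums without further comment. If you want to be fully rigorous in the $K=\infty$ case, the cleanest route is to observe that both $\sum_{i,j}a_{ij}S_{i,j}(\eta_A)$ and $\sum_{i,j}b_{ij}S_{i,j}(\eta_A)$ converge absolutely in mass (since $\sum_{i,j}a_{ij}\mathbf{M}(S_{i,j}(\eta_A))=\mathbf{M}(T)$ and likewise for $b_{ij}$ by your density argument), so their difference is the well-defined current you need to show is zero.
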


\begin{proof}
Let $A=[a_{ij}] \in \mathcal{A}_{M,N}$, $B=[b_{ij}]\in \mathcal{A}_{M,N} $, then $A \triangleq B$ gives 
$$B = A + \sum_{k} t_k E_k,$$ 
for some real numbers $t_k$ and elementary matrices $E_k = E[(i_k,j_k),(i'_k,j'_k)]$ that are \emph{admissible} to $A$.
Using $S_{i,j}(\eta)$ defined in (\ref{eqn: S_ij}), we have
\begin{eqnarray*}
\int_{\Gamma}I_\gamma d(\eta_B-\eta_A)
&=& 
\int_{\Gamma}I_\gamma \,d \left(\sum_{i,j}\frac{b_{ij}}{a_{ij}}\eta_A \lfloor_{\Gamma_{x_i,y_j}} - \sum_{i,j}\eta_A \lfloor_{\Gamma_{x_i,y_j}} \right) \\
&=&
\sum_{i,j} \frac{b_{ij}-a_{ij}}{a_{ij}}\int_{\Gamma_{x_i,y_j}}I_\gamma d\eta_A \\
&=&
\sum_{i,j}(b_{ij}-a_{ij})S_{i,j}(\eta_A)
=
\sum_{k} t_k\sum_{i,j} (E_k)_{ij} S_{i,j}(\eta_A)\\
&=&
\sum_k  t_k \cdot \left(S_{i_k,j_k}(\eta_A) - S_{i_k,j'_k}(\eta_A) -S_{i'_k,j_k}(\eta_A) + S_{i'_k,j'_k}(\eta_A) \right).
\end{eqnarray*}
Since $E_k$'s are admissible to $A$, then $a_{ij} >0$ for $(i,j) \in \{(i_k,j_k)), (i_k,j'_k)), (i'_k,j_k)), (i'_k,j'_k)) \}$.
Since $$S_{i_k,j_k}(\eta_A) - S_{i_k,j'_k}(\eta_A) -S_{i'_k,j_k}(\eta_A) + S_{i'_k,j'_k}(\eta_A)$$  is on $T$ and $a_{ij}>0$, 
direct calculation gives
$$ \partial \left(S_{i_k,j_k} (\eta_A)- S_{i_k,j'_k}(\eta_A) -S_{i'_k,j_k}(\eta_A) + S_{i'_k,j'_k}(\eta_A) \right) =0.$$
By Definition \ref{def: cycle-free current}, $T$ is a cycle-free transport path implies
$$S_{i_k,j_k}(\eta_A) - S_{i_k,j'_k}(\eta_A) -S_{i'_k,j_k}(\eta_A) + S_{i'_k,j'_k}(\eta_A) =0. $$
Hence, 
$$\int_\Gamma  I_\gamma d   \eta_B =\int_\Gamma  I_\gamma d \eta_A.$$

By using an analogous argument as in the proof of Step 1 in Lemma \ref{lem: S_ij(1,1)}, it follows that $\eta_B$ is also a good decomposition of $T$. 
\end{proof}

Given a matrix $A$ with non-negative entries, Theorem \ref{thm: stairshaped-matrix} gives a stair-shaped matrix $B$, such that $A \cong B$, which by definition says $B= A + \sum_{k} t_k E_k$ for some elementary matrices $E_k$.
In general, $A \cong B$ does not imply $A \triangleq B$, since it is possible that some $E_k$'s are not admissible to $A$.
However, when each entries of $A$ is positive (as illustrated in Example \ref{ex: stair shaped decomposition and measures}),  $A \cong B$ implies 
$A \triangleq B$.
In general, when $A$ satisfies certain conditions as stated in the following corollary, we have both $A \cong B$ and $A \triangleq B$, so that the $\eta_B$ in (\ref{eqn: eta_B}) is a stair-shaped good decomposition.

Suppose $A=[a_{ij}]$, let $A[(i_0,j_0),(i'_0,j'_0)]$ be the ``sub-matrix" of $A$ with entries $a_{ij}$'s such that $i_0 \le i \le i'_0$, $j_0 \le j \le j'_0$.

\begin{corollary}
\label{cor: stair shaped blocks}
Let $T$ be a cycle-free transport path from $\mu^-$ to $\mu^+$. Let $A =[a_{ij}]$ be the representing matrix
of a good decomposition $\eta_A$ of $T$. If there exist a list of sub-matrices $A_k = A [(i_k,j_k), (i'_{k},j'_{k})]$ of $A$ such that 
\begin{itemize}
    \item [(a)] $(i_1,j_1) = (1,1)$ and $i'_k \le i_{k+1} \le i'_k+1, \ j'_k \le j_{k+1} \le j'_k+1$ for each $k$,
    \item[(b)] all elements of the sub-matrix $A_k$ are positive for each $k$,
    \item [(c)] all elements of $A$ not in any of the sub-matrices are $0$,
\end{itemize}
then there exists a stair-shaped good decomposition $\eta_B$ of $T$ with $\eta_B \precc \eta_A$. 
Hence, $T$ is stair-shaped.
\end{corollary}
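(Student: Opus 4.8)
The plan is to do all the work at the matrix level and then transfer back to $T$ by means of the two transfer results already available, Lemma~\ref{lem: eta_B} and Proposition~\ref{prop: matrix_good_decomp}. Concretely, I would apply the construction in the proof of Theorem~\ref{thm: stairshaped-matrix} (equivalently, Algorithm~\ref{rem: Stair shaped Algorithm}) to the representing matrix $A$ of $\eta_A$ to obtain a stair-shaped matrix $B\in\mathcal{A}_{M,N}$ with $A\cong B$, say $B=A+\sum_k t_k E_k$ for elementary matrices $E_k$. The core of the argument is to upgrade this to $A\triangleq B$, that is, to check that every $E_k$ produced by the procedure is \emph{admissible} to $A$. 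Once this is done, Proposition~\ref{prop: matrix_good_decomp} shows that $\eta_B$ defined in (\ref{eqn: eta_B}) is again a good decomposition of $T$; by Lemma~\ref{lem: eta_B} its representing matrix is $B$, so $\eta_B$ is stair-shaped, and the same lemma gives $\eta_B\precc \eta_A$. By Definition~\ref{def: stair shaped matrix corresponds to eta}, $T$ is then stair-shaped, which is the asserted conclusion.

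To verify $A\triangleq B$ I would run the stair-shaping procedure corner by corner along its sequence of ``corners'' $(r_k,c_k)$ and maintain two invariants on the running matrix $A^{(k)}$: (I) $\mathrm{supp}(A^{(k)})\subseteq\bigcup_\ell \mathrm{supp}(A_\ell)$, i.e. no entry that is zero in $A$ ever becomes positive; and (II) the corner $(r_k,c_k)$ lies in some block $A_\ell$, and every elementary matrix used while processing that corner has all four of its marked positions inside a single block. Invariant (I) is what makes admissibility meaningful, since by hypothesis (b) every position lying in a block $A_\ell$ has a positive entry in $A$. The step of Theorem~\ref{thm: stairshaped-matrix} at a corner $(r,c)$ uses only matrices $E[(r,c),(i,j)]$ whose coefficient is a positive multiple of $A^{(k)}_{i,c}\cdot A^{(k)}_{r,j}$; hence by (I) the positions $(i,c)$ and $(r,j)$ lie in blocks of $A$, and, being in the same column $c$, respectively row $r$, as $(r,c)$, they force $(i,j)$ into the same block (here the adjacency condition (a) — consecutive blocks overlap in at most one row and one column — is exactly what pins down the situation at a corner sitting on a shared row or column of two consecutive blocks). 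Thus these matrices only redistribute mass inside one block, create no new positive entries, and so re-establish (I) and (II) for the next corner; condition (c) guarantees there is nothing to process away from the blocks.

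I expect the main obstacle to be precisely this bookkeeping at the transitions between consecutive blocks: one must confirm that when the algorithm's corner reaches the overlapping row or column of $A_\ell$ and $A_{\ell+1}$, the cleaning operations never touch a position at which $A$ vanishes. The four sub-cases permitted by (a) — diagonal adjacency, overlap in exactly one row, overlap in exactly one column, and overlap in the single corner entry — should each be handled separately, in every case invoking (b) to certify that the $E_k$ involved is admissible to $A$. Modulo this routine but somewhat tedious case analysis, the statement follows; note in particular that, since $\eta_B\precc \eta_A$ by Lemma~\ref{lem: eta_B}, the resulting stair-shaped good decomposition is absolutely continuous with respect to the original one, as required.
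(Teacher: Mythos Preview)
Your proposal is correct and follows the same overall strategy as the paper: produce a stair-shaped $B$ with $A\triangleq B$, then invoke Lemma~\ref{lem: eta_B} and Proposition~\ref{prop: matrix_good_decomp}. The difference is in how $B$ is built and how admissibility is checked.

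You run the \emph{global} stair-shaping procedure of Theorem~\ref{thm: stairshaped-matrix} on $A$ and then argue, via the invariants (I) and (II) and the case analysis at block transitions, that every elementary matrix produced has all four marked positions in a single block. This is right (and in fact one can show that if $E[(r,c),(i,j)]$ carries a nonzero coefficient then $(i,c)$ and $(r,j)$ must lie in the \emph{same} block by the monotonicity in condition~(a), forcing $(r,c)$ and $(i,j)$ there as well), but it is exactly the ``tedious case analysis'' you flag as the main obstacle.

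The paper sidesteps this entirely by applying Theorem~\ref{thm: stairshaped-matrix} \emph{block by block}: first stair-shape $A_1$, then stair-shape the second block of the resulting matrix, and so on. Because each step operates only inside a sub-matrix $A[(i_k,j_k),(i'_k,j'_k)]$ whose entries in $A$ are all positive by hypothesis~(b), every elementary matrix used is automatically admissible to $A$---no invariants or boundary bookkeeping needed. Condition~(a) is used only to check that the locally stair-shaped pieces assemble into a globally stair-shaped $B$. So the paper's route is shorter; yours trades that simplicity for the conceptual point that the global Algorithm~\ref{rem: Stair shaped Algorithm}, applied blindly to such an $A$, never leaves the support of $A$.
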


\begin{proof}
We construct the desired stair-shaped matrix by using induction.
We first apply Theorem \ref{thm: stairshaped-matrix} to the sub-matrix $$A_1 = A [(i_1,j_1), (i'_1,j'_1)]$$ and get a stair-shaped $A'_1$.
Then replace entries in $A$ with entries in $A'_1$ in their corresponding original positions in $A$, and denote this new matrix as $B_1$. 
Inductively, for each $k \ge 1$, apply Theorem \ref{thm: stairshaped-matrix} to the sub-matrix
$$B_k [(i_{k+1},j_{k+1}), (i'_{k+1},j'_{k+1})]$$ of $B_k$ and get a stair-shaped $A'_{k+1}$.
Then replace entries in $B_k$ with entries in $A'_{k+1}$ in their corresponding original positions in $B_{k}$, and denote this matrix as $B_{k+1}$. Note that for each $k$, by condition (a), the sub-matrix $B_{k} [(i_{1},j_{1}), (i'_{k},j'_{k})]$ is stair-shaped and 
\begin{equation}
\label{eqn: B_k}
B_{K} [(i_{1},j_{1}), (i'_{k},j'_{k})]= B_{k} [(i_{1},j_{1}), (i'_{k},j'_{k})], \text{ for each } K\ge k+2.
\end{equation}
As a result, for each $(i,j)$, the limit $\lim_{k \to \infty} B_k(i,j)$ exists and equals the value of  $B_k(i,j)$ when $k$ is large enough. 

Let $B$ be the limit matrix of $\{B_k\}$ whose $(i,j)$-entry 
$ B (i,j)   = \lim_{k \to \infty} B_k(i,j)$ for each $(i,j)$. By (\ref{eqn: B_k}), $B[(i_{1},j_{1}), (i'_{k},j'_{k})]= B_{k} [(i_{1},j_{1}), (i'_{k},j'_{k})]$ for each $k$. Since $B_{k} [(i_{1},j_{1}), (i'_{k},j'_{k})]$ is stair-shaped, $B$ is also stair-shaped. Since $B$ is a stair-shaped matrix, its corresponding measure $\eta_B$ as defined in (\ref{eqn: eta_B}) is stair-shaped.
By $(b)$ and definition of \textit{admissible matrices}, we have $A \triangleq B$. 
Therefore, Proposition \ref{prop: matrix_good_decomp} gives $\eta_B$ is a good decomposition with $\eta_B \precc \eta_A$.
\end{proof}

In the end, we provide a typical matrix of finite size satisfying conditions $(a),(b),(c)$ in Corollary \ref{cor: stair shaped blocks}, and see how to decompose the corresponding cycle-free stair-shaped transport path into the difference of two map-compatible paths. 
\begin{example} \label{ex: stair-shaped decomposition}

Let
$$\mu^- = 4 \delta_{x_1} +11 \delta_{x_2} + 14 \delta_{x_3} +11 \delta_{x_4} +17 \delta_{x_5} +10 \delta_{x_6} +3 \delta_{x_7} +6 \delta_{x_8} + 2 \delta_{x_9} + \delta_{x_{10}} +5 \delta_{x_{11}}, $$
$$\mu^+ = 4 \delta_{y_1} + 3 \delta_{y_2} + 14 \delta_{y_3} + 11 \delta_{y_4} + 12 \delta_{y_5} + 7 \delta_{y_6} +7 \delta_{y_7} + 9 \delta_{y_8} + 3 \delta_{y_9} + 3 \delta_{y_{10}} + 11 \delta_{y_{11}},$$
and $T$ be a cycle-free transport path from $\mu^-$ to $\mu^+$ illustrated by the following diagram:

\begin{center}
\begin{tikzpicture} [>=latex,scale=1.22]

\filldraw[black] (1,8) circle (1pt) node[anchor=east] {$x_{1}$};
\filldraw[black] (0,7) circle (1pt) node[anchor=east] {$x_{2}$};
\filldraw[black] (0.5,6) circle (1pt) node[anchor=east] {$x_{3}$};
\filldraw[black] (1.5,4.5) circle (1pt) node[anchor=north] {$x_{4}$};
\filldraw[black] (7.25,8) circle (1pt) node[anchor=west] {$x_{5}$};
\filldraw[black] (8,7) circle (1pt) node[anchor=west] {$x_{6}$};
\filldraw[black] (12,8.5) circle (1pt) node[anchor=west] {$x_{7}$};
\filldraw[black] (11.25,7.25) circle (1pt) node[anchor=west] {$x_{8}$};
\filldraw[black] (12.5,6.5) circle (1pt) node[anchor=west] {$x_{9}$};
\filldraw[black] (11.5,5) circle (1pt) node[anchor=west] {$x_{10}$};
\filldraw[black] (9.25,4.75) circle (1pt) node[anchor=south] {$x_{11}$};

\filldraw[black] (5,3.5) circle (1pt) node[anchor=north] {$y_{1}$};
\filldraw[black] (4,4) circle (1pt) node[anchor=east] {$y_{2}$};
\filldraw[black] (4,5.5) circle (1pt) node[anchor=north] { };
\filldraw[black] (3.8, 5.35) circle (0pt) node[anchor=north] {$y_{3}$};

\filldraw[black] (5,6.5) circle (1pt) node[anchor=south] {$y_{4}$};
\filldraw[black] (5.75,5.75) circle (1pt) node[anchor=north] {$y_{5}$};
\filldraw[black] (7.35,4.75) circle (1pt) node[anchor=north] {$y_{6}$};
\filldraw[black] (7,3.5) circle (1pt) node[anchor=north] {$y_{7}$};
\filldraw[black] (5.75,3.25) circle (1pt) node[anchor=north] {$y_{8}$};
\filldraw[black] (9.25,6.5) circle (1pt) node[anchor=north] {$y_{9}$};
\filldraw[black] (10,5.75) circle (1pt) node[anchor=north] {$y_{10}$};
\filldraw[black] (9,3) circle (1pt) node[anchor=north] {$y_{11}$};

\draw[->] (1,8)--(1.2,7);
\draw[->] (0,7)--(1.2,7);
\draw[thick,->] (1.2,7)--(2.25,5.75);
\draw[very thick, ->]  (2.25,5.75)--(4,5.5);
\draw[thick,->]       (4,5.5)--(4.5,4.25);
\draw[->]             (4.5,4.25)--(5,3.5);
\draw[->]             (4.5,4.25)--(4,4);

\filldraw[black] (1.1,7.5) circle (0pt) node[anchor=east] {$4$};
\filldraw[black] (0.5,6.95) circle (0pt) node[anchor=south] {$11$};
\filldraw[black] (2,6.25) circle (0pt) node[anchor=south] {$15$};
\filldraw[black] (4.25,4.8) circle (0pt) node[anchor=west] {$7$};
\filldraw[black] (4.65,4) circle (0pt) node[anchor=west] {$4$};
\filldraw[black] (4.3,4.15) circle (0pt) node[anchor=north] {$3$};

\draw[->](0.5,6)--(1.6,5.65);
\draw[->](1.5,4.5)--(1.6,5.65);
\draw[thick,->]         (1.6,5.65)--(2.25,5.75);
\draw[thick,->]        (4,5.5)--(4.75,5.85);
\draw[->]              (4.75,5.85)--(5,6.5);
\draw[->]              (4.75,5.85)--(5.75,5.75);

\filldraw[black] (1.1,5.65) circle (0pt) node[anchor=east] {$14$};
\filldraw[black] (1.6,4.95) circle (0pt) node[anchor=east] {$11$};
\filldraw[black] (1.9,5.25) circle (0pt) node[anchor=south] {$25$};
\filldraw[black] (2.95,5.6) circle (0pt) node[anchor=south] {$40$};
\filldraw[black] (4.2,5.6) circle (0pt) node[anchor=south] {$19$};
\filldraw[black] (4.35,6.2) circle (0pt) node[anchor=west] {$11$};
\filldraw[black] (5.15,5.85) circle (0pt) node[anchor=north] {$8$};

\draw[->](7.25,8)--(6.85,6.65);
\draw[->](8,7)--(6.85,6.65);
\draw[thick,->]       (6.85,6.65)--(6.75,5.85);
\draw[->]                    (6.75,5.85)--(5.75,5.75);
\draw[thick,->]                    (6.75,5.85)--(6.5,5);
\draw[->]                             (6.5,5)--(7.35,4.75);
\draw[thick,->]                        (6.5,5)--(6.35,3.9);
\draw[->]                             (6.35,3.9)--(7,3.5);
\draw[->]                           (6.35,3.9)--(5.75,3.25);

\filldraw[black] (7,7.4) circle (0pt) node[anchor=west] {$17$};
\filldraw[black] (7.5,6.9) circle (0pt) node[anchor=north] {$10$};
\filldraw[black] (6.75,6.25) circle (0pt) node[anchor=west] {$27$};
\filldraw[black] (6.25,5.8) circle (0pt) node[anchor=north] {$4$};
\filldraw[black] (6.9,5.65) circle (0pt) node[anchor=north] {$23$};
\filldraw[black] (6.9,4.9) circle (0pt) node[anchor=north] {$7$};
\filldraw[black] (6.6,4.5) circle (0pt) node[anchor=north] {$16$};
\filldraw[black] (6.6,3.75) circle (0pt) node[anchor=north] {$7$};
\filldraw[black] (6.15,3.7) circle (0pt) node[anchor=north] {$9$};

\draw[->](12,8.5)--(11.25,7.25);
\draw[thick,->](11.25,7.25)--(10.025,6.5025);
\draw[->]       (10.025,6.5025)--(9.25,6.5);
\draw[->]       (10.025,6.5025)--(10,5.75);

\filldraw[black] (11.75,8) circle (0pt) node[anchor=north] {$3$};
\filldraw[black] (10.65,6.9) circle (0pt) node[anchor=north] {$6$};
\filldraw[black] (9.6,6.55) circle (0pt) node[anchor=north] {$3$};
\filldraw[black] (10.2,6.35) circle (0pt) node[anchor=north] {$3$};

\draw[->](11.25,7.25)--(11.75,6);
\draw[->](12.5,6.5)--(11.75,6);
\draw[thick,->]         (11.75,6)--(11.5,5);
\draw[thick,->]            (11.5,5)--(9.55,3.75);
\draw[->]            (9.25,4.75)--(9.55,3.75);
\draw[very thick,->]             (9.55,3.75)--(9,3);

\filldraw[black] (11.3,6.8) circle (0pt) node[anchor=north] {$3$};
\filldraw[black] (12.2,6.3) circle (0pt) node[anchor=north] {$2$};
\filldraw[black] (11.8,5.8) circle (0pt) node[anchor=north] {$5$};
\filldraw[black] (10.7,4.5) circle (0pt) node[anchor=north] {$6$};
\filldraw[black] (9.25,4.4) circle (0pt) node[anchor=north] {$5$};
\filldraw[black] (9.5,3.5) circle (0pt) node[anchor=north] {$11$};

\end{tikzpicture}

\begin{tikzpicture}
\filldraw[black] (0,0) circle (0pt) node[anchor=north]{Transport Path $T$};    
\end{tikzpicture}

\end{center}
Let 
\[ A =
\left[
\begin{array}{cccccccccccccccccccc}
1&1&2&0&0&0&0&0&0&0&0  \\
3&2&1&2&3&0&0&0&0&0&0  \\
0&0&6&7&1&0&0&0&0&0&0  \\
0&0&5&2&4&0&0&0&0&0&0  \\
0&0&0&0&1&3&6&7&0&0&0  \\
0&0&0&0&3&4&1&2&0&0&0  \\
0&0&0&0&0&0&0&0&1&2&0  \\
0&0&0&0&0&0&0&0&2&1&3  \\
0&0&0&0&0&0&0&0&0&0&2  \\
0&0&0&0&0&0&0&0&0&0&1  \\
0&0&0&0&0&0&0&0&0&0&5 \\
\end{array} \right].
\]
Then, $A=[a_{ij}]$ is the corresponding matrix of a good decomposition $\eta_A$ of $T$, namely 
$$\eta_A:=\sum_{i,j}a_{ij}\delta_{\gamma_{x_i,y_j}}.$$
Here, $A$ satisfies conditions $(a),(b),(c)$ in Corollary \ref{cor: stair shaped blocks} with
\[ A_1=
\begin{bmatrix}
1&1&2 \\
3&2&1
\end{bmatrix},\ 
A_2=
\begin{bmatrix}
1&2&3 \\
6&7&1 \\
5&2&4
\end{bmatrix},\ 
A_3 = 
\begin{bmatrix}
1&3&6&7   \\
3&4&1&2   \\
\end{bmatrix},\ 
A_4 = 
\begin{bmatrix}
1&2 \\
2&1
\end{bmatrix} \text{, and } 
A_5 = 
\begin{bmatrix}
3 \\
2  \\
1 \\
5 \\
\end{bmatrix}.
\]

Using algorithm \ref{rem: Stair shaped Algorithm}, we have 
\[ A'_1=
\begin{bmatrix}
4&0&0 \\
0&3&3
\end{bmatrix},\ 
A'_2=
\begin{bmatrix}
8&0&0 \\
6&8&0 \\
0&3&8
\end{bmatrix},\ 
A'_3 = 
\begin{bmatrix}
4&7&6&0   \\
0&0&1&9   \\
\end{bmatrix},\ 
A'_4 = 
\begin{bmatrix}
3&0 \\
0&3
\end{bmatrix} \text{, and } 
A'_5 = 
\begin{bmatrix}
3 \\
2  \\
1 \\
5 \\
\end{bmatrix}.
\]
By Corollary \ref{cor: stair shaped blocks},
$$\eta_B:=\sum_{i,j} b_{ij}\delta_{\gamma_{x_i,y_j}}$$
is a stair-shaped good decomposition of $T$ with $\eta_B\precc \eta_A$, where the matrix
\[B =[b_{ij}]=
\left[
\begin{array}{cccccccccccccccccccc}
4&0&0&0&0&0&0&0&0&0&0  \\
0&3&8&0&0&0&0&0&0&0&0  \\
0&0&6&8&0&0&0&0&0&0&0  \\
0&0&0&3&8&0&0&0&0&0&0  \\
0&0&0&0&4&7&6&0&0&0&0  \\
0&0&0&0&0&0&1&9&0&0&0  \\
0&0&0&0&0&0&0&0&3&0&0  \\
0&0&0&0&0&0&0&0&0&3&3  \\
0&0&0&0&0&0&0&0&0&0&2  \\
0&0&0&0&0&0&0&0&0&0&1  \\
0&0&0&0&0&0&0&0&0&0&5 \\
\end{array} 
\right]
\]
is stair-shaped.
 
Now, by the proof of Theorem \ref{thm: stair shaped induced transport maps}, one may decompose the stair-shaped matrix $B$ into $B=B_1+B_2$ where
\[B_1 =
\left[
\begin{array}{cccccccccccccccccccc}
4&0&0&0&0&0&0&0&0&0&0  \\
0&0&8&0&0&0&0&0&0&0&0  \\
0&0&0&8&0&0&0&0&0&0&0  \\
0&0&0&0&8&0&0&0&0&0&0  \\
0&0&0&0&0&0&6&0&0&0&0  \\
0&0&0&0&0&0&0&9&0&0&0  \\
0&0&0&0&0&0&0&0&3&0&0  \\
0&0&0&0&0&0&0&0&0&0&3  \\
0&0&0&0&0&0&0&0&0&0&2  \\
0&0&0&0&0&0&0&0&0&0&1  \\
0&0&0&0&0&0&0&0&0&0&5 \\
\end{array} 
\right] \text{ and }
B_2 =
\left[
\begin{array}{cccccccccccccccccccc}
0&0&0&0&0&0&0&0&0&0&0  \\
0&3&0&0&0&0&0&0&0&0&0  \\
0&0&6&0&0&0&0&0&0&0&0  \\
0&0&0&3&0&0&0&0&0&0&0  \\
0&0&0&0&4&7&0&0&0&0&0  \\
0&0&0&0&0&0&1&0&0&0&0  \\
0&0&0&0&0&0&0&0&0&0&0  \\
0&0&0&0&0&0&0&0&0&3&0  \\
0&0&0&0&0&0&0&0&0&0&0  \\
0&0&0&0&0&0&0&0&0&0&0  \\
0&0&0&0&0&0&0&0&0&0&0 \\
\end{array} 
\right].\]

From matrix $B_1$ and the transport path $T$, we may construct the corresponding transport path  $T_1 \in Path(\mu_1^-,\mu_1^+)$ illustrated below, where
$$\mu_1^- = 4 \delta_{x_1} +8 \delta_{x_2} + 8 \delta_{x_3} +8 \delta_{x_4} +6 \delta_{x_5} +9 \delta_{x_6} +3 \delta_{x_7} +3 \delta_{x_8} + 2 \delta_{x_9} + \delta_{x_{10}} +5 \delta_{x_{11}}, $$
and
$$\mu_1^+ = 4 \delta_{y_1} + 8 \delta_{y_3} + 8 \delta_{y_4} + 8 \delta_{y_5} + 6 \delta_{y_7} + 9 \delta_{y_8} + 3 \delta_{y_9} +  11 \delta_{y_{11}}.$$

\begin{tikzpicture} [>=latex,scale=1.19]

\filldraw[black] (1,8) circle (1pt) node[anchor=east] {$x_{1}$};
\filldraw[black] (0,7) circle (1pt) node[anchor=east] {$x_{2}$};
\filldraw[black] (0.5,6) circle (1pt) node[anchor=east] {$x_{3}$};
\filldraw[black] (1.5,4.5) circle (1pt) node[anchor=north] {$x_{4}$};
\filldraw[black] (7.25,8) circle (1pt) node[anchor=west] {$x_{5}$};
\filldraw[black] (8,7) circle (1pt) node[anchor=west] {$x_{6}$};
\filldraw[black] (12,8.5) circle (1pt) node[anchor=west] {$x_{7}$};
\filldraw[black] (11.25,7.25) circle (1pt) node[anchor=west] {$x_{8}$};
\filldraw[black] (12.5,6.5) circle (1pt) node[anchor=west] {$x_{9}$};
\filldraw[black] (11.5,5) circle (1pt) node[anchor=west] {$ $};

\filldraw[black] (11.6,4.9) circle (0pt) node[anchor=west] {$x_{10}$};

\filldraw[black] (9.25,4.75) circle (1pt) node[anchor=south] {$x_{11}$};

\filldraw[black] (5,3.5) circle (1pt) node[anchor=north] {$y_{1}$};

\filldraw[black] (4,5.5) circle (1pt) node[anchor=north] { };

\filldraw[black] (3.8, 5.35) circle (0pt) node[anchor=north] {$y_{3}$};

\filldraw[black] (5,6.5) circle (1pt) node[anchor=south] {$y_{4}$};
\filldraw[black] (5.75,5.75) circle (1pt) node[anchor=north] {$y_{5}$};

\filldraw[black] (7,3.5) circle (1pt) node[anchor=north] {$y_{7}$};
\filldraw[black] (5.75,3.25) circle (1pt) node[anchor=north] {$y_{8}$};
\filldraw[black] (9.25,6.5) circle (1pt) node[anchor=north] {$y_{9}$};

\filldraw[black] (9,3) circle (1pt) node[anchor=north] {$y_{11}$};

\draw[blue, ->] (1,8)--(1.4,7.2);
\draw[cyan, ->] (0,7)--(1.3,7.1);
\draw[blue, ->] (1.4,7.2)--(2.45,5.95);
\draw[cyan, ->] (1.3,7.1)--(2.35,5.85);
\draw[blue, ->]   (2.45,5.95)--(3.85,5.75);
\draw[cyan, ->]   (2.35,5.85)--(3.75,5.65);

\draw[blue,->]        (4,5.5)--(4.5,4.25);
\draw[blue,->]        (4.5,4.25)--(5,3.5);

\draw[semithick,densely dotted](3.85,5.75)--(4,5.5);
\draw[semithick,densely dotted](3.75,5.65)--(4,5.5);

\filldraw[black] (1.3,7.45) circle (0pt) node[anchor=east] {$4$};
\filldraw[black] (0.5,7) circle (0pt) node[anchor=south] {$8$};
\filldraw[black] (2,6.25) circle (0pt) node[anchor=south] { };
\filldraw[black] (4.25,4.8) circle (0pt) node[anchor=west] { };
\filldraw[black] (4.65,4) circle (0pt) node[anchor=west] {$4$};

\draw[orange,->](0.5,6)--(1.6,5.65);
\draw[violet,->](1.5,4.5)--(1.7,5.55);
\draw[orange,->]   (1.6,5.65)--(2.25,5.75);
\draw[violet,->]   (1.7,5.55)--(2.35,5.65);
\draw[orange,->]   (2.25,5.75)--(3.65,5.55);
\draw[orange]   (3.65,5.55)--(4,5.5);
\draw[violet,->]   (2.35,5.65)--(3.75,5.45);
\draw[semithick,densely dotted](3.75,5.45)--(4,5.5);
\draw[orange,->]   (4,5.5)--(4.75,5.85);
\draw[violet,->]   (4.25,5.47)--(4.85,5.75);
\draw[semithick,densely dotted] (3.75,5.45) .. controls (4,5.25) .. (4.25,5.47);
\draw[orange,->]         (4.75,5.85)--(5,6.5);
\draw[violet,->]         (4.85,5.75)--(5.75,5.75);

\filldraw[black] (1.1,5.65) circle (0pt) node[anchor=east] {$8$};
\filldraw[black] (1.65,4.95) circle (0pt) node[anchor=east] {$8$};
\filldraw[black] (1.9,5.25) circle (0pt) node[anchor=south] { };
\filldraw[black] (2.65,5.6) circle (0pt) node[anchor=south] { };
\filldraw[black] (4.2,5.6) circle (0pt) node[anchor=south] { };
\filldraw[black] (4.5,6.2) circle (0pt) node[anchor=west] {$8$};
\filldraw[black] (5.15,5.75) circle (0pt) node[anchor=north] {$8$};

\draw[magenta,->](7.25,8)--(6.75,6.75);
\draw[brown,->](8,7)--(6.85,6.65);
\draw[magenta,->] (6.75,6.75)--(6.65,5.95);
\draw[brown,->]       (6.85,6.65)--(6.75,5.85);
\draw[magenta,->] (6.65,5.95)--(6.4,5.1);
\draw[brown,->]      (6.75,5.85)--(6.5,5);
\draw[magenta,->] (6.4,5.1) -- (6.25,4);
\draw[brown,->]      (6.5,5)--(6.35,3.9);
\draw[semithick, densely dotted](6.25,4)-- (6.35,3.9);

\draw[magenta,->]       (6.35,3.9)--(7,3.5);
\draw[brown,->]    (6.35,3.9)--(5.75,3.25);

\filldraw[black] (7,7.4) circle (0pt) node[anchor=west] {$6$};
\filldraw[black] (7.5,6.9) circle (0pt) node[anchor=north] {$9$};
\filldraw[black] (6.75,6.25) circle (0pt) node[anchor=west] { };

\filldraw[black] (6.9,5.65) circle (0pt) node[anchor=north] { };

\filldraw[black] (6.6,4.5) circle (0pt) node[anchor=north] { };
\filldraw[black] (6.6,3.75) circle (0pt) node[anchor=north] {$6$};
\filldraw[black] (6.15,3.7) circle (0pt) node[anchor=north] {$9$};

\draw[purple,->](12,8.5)--(11.25,7.25);
\draw[purple,->](11.25,7.25)--(10.025,6.5025);
\draw[purple,->](10.025,6.5025)--(9.25,6.5);

\filldraw[black] (11.75,8) circle (0pt) node[anchor=north] {$3$};
\filldraw[black] (10.65,6.9) circle (0pt) node[anchor=north] {$ $};
\filldraw[black] (9.6,6.55) circle (0pt) node[anchor=north] {$3$};

\draw[teal,->](11.25,7.25)--(11.65,6.1);
\draw[red,->](12.5,6.5)--(11.75,6);
\draw[teal,->]    (11.65,6.1)--(11.4,5.1);

\draw[red,->] (11.75,6)--(11.5,5);

\draw[teal,->]    (11.4,5.1)--(9.45,3.85);

\draw[red,->]    (11.5,5)--(9.55,3.75);
\draw[olive,->]      (9.25,4.75)--(9.35,3.95);
\draw[teal,->]     (9.45,3.85)--(8.9,3.1);
\draw[olive,->]    (9.35,3.95)--(8.8,3.2);
\draw[red,->]     (9.55,3.75)--(9,3);

\draw[semithick,densely dotted](9.35,3.95)--(9.55,3.75)--(9.65,3.65);

\draw[semithick,densely dotted](11.6,4.9)--(11.5,5)--(11.4,5.1);
\draw[->]     (11.6,4.9)--(9.65,3.65);
\draw[->]     (9.65,3.65)--(9.1,2.9);

\draw[semithick,densely dotted](8.8,3.2)--(8.9,3.1)--(9,3)--(9.1,2.9);

\filldraw[black] (11.3,6.8) circle (0pt) node[anchor=north] {$3$};
\filldraw[black] (12.2,6.3) circle (0pt) node[anchor=north] {$2$};
\filldraw[black] (11.8,5.8) circle (0pt) node[anchor=north] {$ $};
\filldraw[black] (10.7,4.35) circle (0pt) node[anchor=north] {$1 $};
\filldraw[black] (9.2,4.7) circle (0pt) node[anchor=north] {$5$};
\filldraw[black] (9.5,3.5) circle (0pt) node[anchor=north] {$ $};

\end{tikzpicture}
\begin{center}
\begin{tikzpicture}
\filldraw[black] (0,0) circle (0pt) node[anchor=north] {Transport Path $T_1$};
    
\end{tikzpicture}   
\end{center}

Note that from the non-zero entries of $B_1$, there exists a transport map
$$\varphi_1: 
\{x_1, x_2,x_3,x_4,x_5,x_6,x_7,x_8,x_9,x_{10},x_{11}\} \longrightarrow 
\{y_1,y_3,y_4,y_5,y_7,y_8,y_9,y_{11}\},$$
where
\begin{eqnarray*}
  &&  \varphi_1(x_1)=y_1,\ 
\varphi_1(x_2)=y_3 ,\ 
\varphi_1(x_3)=y_4 ,\ 
\varphi_1(x_4)=y_5 ,\ 
\varphi_1(x_5)=y_7 ,\ 
\varphi_1(x_6)=y_8,\\
&& \varphi_1(x_7)=y_9 ,\ 
\varphi_1(x_8)=y_{11} ,\ 
\varphi_1(x_9)=y_{11} ,\ 
\varphi_1(x_{10})=y_{11} ,\ 
\varphi_1(x_{11})=y_{11} .
\end{eqnarray*}
Here, $\varphi_{1\#} \mu_1^- = \mu_1^+$, and $(T_1, \varphi_1)$ is compatible.

Similarly, using matrix $B_2$ and transport path $T$, we may construct the corresponding transport path $T_2\in Path(\mu_2^-,\mu_2^+)$ as illustrated below, where 
$$\mu_2^- = 3 \delta_{x_2} +  6 \delta_{x_3} +  3 \delta_{x_4} + 11 \delta_{x_5} +  \delta_{x_6} + 3 \delta_{x_8}, $$
and
$$\mu_2^+ = 3 \delta_{y_2} + 6 \delta_{y_3} + 3 \delta_{y_4} + 4 \delta_{y_5} + 7 \delta_{y_6} +  \delta_{y_7} + 3 \delta_{y_{10}} .$$
\begin{tikzpicture} [>=latex,scale=1.2]

\filldraw[black] (0,7) circle (1pt) node[anchor=east] {$x_{2}$};
\filldraw[black] (0.5,6) circle (1pt) node[anchor=east] {$x_{3}$};
\filldraw[black] (1.5,4.5) circle (1pt) node[anchor=north] {$x_{4}$};
\filldraw[black] (7.25,8) circle (1pt) node[anchor=west] {$x_{5}$};
\filldraw[black] (8,7) circle (1pt) node[anchor=west] {$x_{6}$};

\filldraw[black] (11.25,7.25) circle (1pt) node[anchor=west] {$x_{8}$};

\filldraw[black] (4,4) circle (1pt) node[anchor=east] {$y_{2}$};
\filldraw[black] (4,5.5) circle (1pt) node[anchor=north] { };

\filldraw[black] (3.8, 5.35) circle (0pt) node[anchor=north] {$y_{3}$};

\filldraw[black] (5,6.5) circle (1pt) node[anchor=south] {$y_{4}$};
\filldraw[black] (5.75,5.75) circle (1pt) node[anchor=north] {$y_{5}$};
\filldraw[black] (7.35,4.75) circle (1pt) node[anchor=north] {$y_{6}$};
\filldraw[black] (7,3.5) circle (1pt) node[anchor=north] {$y_{7}$};

\filldraw[black] (10,5.75) circle (1pt) node[anchor=north] {$y_{10}$};

\draw[cyan, ->] (0,7)--(1.3,7.1);
\draw[cyan, ->] (1.3,7.1)--(2.35,5.85);
\draw[cyan, ->]   (2.35,5.85)--(3.75,5.65);

\draw[semithick,densely dotted](3.75,5.65)--(4,5.5);
\draw[cyan,->]       (4,5.5)--(4.5,4.25);
\draw[cyan,->]             (4.5,4.25)--(4,4);

\filldraw[black] (0.5,7) circle (0pt) node[anchor=south] {$3$};
\filldraw[black] (2,6.25) circle (0pt) node[anchor=south] {$ $};
\filldraw[black] (4.25,4.8) circle (0pt) node[anchor=west] {$ $};

\filldraw[black] (4.3,4.15) circle (0pt) node[anchor=north] {$3$};

\draw[orange,->](0.5,6)--(1.6,5.65);
\draw[violet,->](1.5,4.5)--(1.7,5.55);
\draw[orange,->]   (1.6,5.65)--(2.25,5.75);
\draw[violet,->]   (1.7,5.55)--(2.35,5.65);
\draw[orange,->]   (2.25,5.75)--(3.65,5.55);
\draw[orange]   (3.65,5.55)--(4,5.5);
\draw[violet,->]   (2.35,5.65)--(3.75,5.45);
\draw[semithick,densely dotted](3.75,5.45)--(4,5.5);
\draw[violet,->]   (4,5.5)--(4.75,5.85);

\draw[violet,->]         (4.75,5.85)--(5,6.5);

\filldraw[black] (1.1,5.65) circle (0pt) node[anchor=east] {$6$};
\filldraw[black] (1.6,4.95) circle (0pt) node[anchor=east] {$3$};
\filldraw[black] (1.9,5.25) circle (0pt) node[anchor=south] {$ $};
\filldraw[black] (2.95,5.6) circle (0pt) node[anchor=south] {$ $};
\filldraw[black] (4.2,5.6) circle (0pt) node[anchor=south] {$ $};
\filldraw[black] (4.55,6.2) circle (0pt) node[anchor=west] {$3$};

\draw[magenta,->](7.25,8)--(6.75,6.75);

\draw[magenta,->](7.45,7.8)--(7.03,6.75);
\draw[magenta,]  (7.45,7.8)--(6.95,6.55);
\draw[semithick, densely dotted](7.25,8)--(7.45,7.8);

\draw[brown,->](8,7)--(6.85,6.65);
\draw[magenta,->](6.75,6.75)--(6.65,5.95);

\draw[magenta,->]  (6.95,6.55)--(6.85,5.75);

\draw[brown,->]   (6.85,6.65)--(6.75,5.85);
\draw[magenta,->]         (6.65,5.95)--(5.75,5.75);

\draw[magenta,->]         (6.85,5.75)--(6.6,4.9);

\draw[brown,->]     (6.75,5.85)--(6.5,5);
\draw[magenta,->]           (6.6,4.9)--(7.35,4.75);
\draw[brown,->]     (6.5,5)--(6.35,3.9);
\draw[brown,->]            (6.35,3.9)--(7,3.5);

\filldraw[black] (6.6,7.4) circle (0pt) node[anchor=west] {$4$};
\filldraw[black] (7.35,7.4) circle (0pt) node[anchor=west] {$7$};
\filldraw[black] (7.5,6.9) circle (0pt) node[anchor=north] {$1$};
\filldraw[black] (6.75,6.25) circle (0pt) node[anchor=west] {$ $};
\filldraw[black] (6.25,5.8) circle (0pt) node[anchor=north] {$4$};
\filldraw[black] (6.9,5.65) circle (0pt) node[anchor=north] {$ $};
\filldraw[black] (6.9,4.9) circle (0pt) node[anchor=north] {$7$};
\filldraw[black] (6.6,4.5) circle (0pt) node[anchor=north] {$1$};
\filldraw[black] (6.6,3.75) circle (0pt) node[anchor=north] {$1$};

\draw[teal,->](11.25,7.25)--(10.025,6.5025);

\draw[teal,->]       (10.025,6.5025)--(10,5.75);

\filldraw[black] (11.75,8) circle (0pt) node[anchor=north] {$ $};
\filldraw[black] (10.65,6.9) circle (0pt) node[anchor=north] {$3$};
\filldraw[black] (9.6,6.55) circle (0pt) node[anchor=north] {$ $};
\filldraw[black] (10.2,6.35) circle (0pt) node[anchor=north] {$3$};

\end{tikzpicture}

\begin{center}
\begin{tikzpicture}
\filldraw[black] (0,0) circle (0pt) node[anchor=north] {Transport Path $T_2$};
    
\end{tikzpicture}   
\end{center}
Again, using the non-zero entries of $B_2$, there exists a transport map
$$\varphi_2: 
\{y_2,y_3,y_4,y_5,y_6,y_7,y_{10}\}
\longrightarrow 
\{x_2,x_3,x_4,x_5,x_6,x_8\}, $$
with 
$$
\varphi_2(y_2)=x_2 ,\ 
\varphi_2(y_3)=x_3 ,\ 
\varphi_2(y_4)=x_4 ,\ 
\varphi_2(y_5)=x_5 ,\ 
\varphi_2(y_6)=x_5 ,\ 
\varphi_2(y_7)=x_6 ,\ 
\varphi_2(y_{10})=x_8 ,\ 
$$
Here, $\mu_2^- = \varphi_{2_\#}\mu_1^+$, and $(-T_2, \varphi_2)$ is compatible. 

As a result, we decompose the cycle-free stair-shaped transport path $T=T_1-T_2$ as the difference of two map-compatible paths $T_1$ and $T_2$.
\end{example}

\end{document}